\pgfplotsset{compat=newest}
\definecolor {processblue}{cmyk}{0.96,0,0,0}
\DeclarePairedDelimiter\floor{\lfloor}{\rfloor}
 \author{Mohamed Slim Kammoun\footnote{mohamed-slim.kammoun@univ-lille.fr
 \\ Univ. Lille, CNRS, UMR 8524 - Laboratoire Paul Painlevé, F-59000 Lille, France.}}
\providecommand{\keywords}[1]{\textbf{\textit{Keywords:}} #1}
\title{On the longest common subsequence of independent random permutations invariant under conjugation}
\newtheorem{theorem}{Theorem}
\newtheorem{corollary}[theorem]{Corollary}
\newtheorem{lemma}[theorem]{Lemma}
\newtheorem{proposition}[theorem]{Proposition}
\theoremstyle{definition}
\newcommand{\e}{\mathbb{E}}
\newcommand{\p}{\mathbb{P}}
\newcommand{\s}{\mathfrak{S}_n}
\newcommand{\inin}{\{1,\dots,n\}}
\newcommand{\bigslant}[2]{{\raisebox{.2em}{$#1$}\left/\raisebox{-.2em}{$#2$}\right.}}
\begin{document}
\maketitle
\begin{abstract} 
 \cite{MR3509473} conjectured that the expectation of the length of the  longest common subsequence of two i.i.d random permutations of size $n$ is greater than $\sqrt{n}$. We prove in this paper that there exists a universal constant $n_1$ such that their  conjecture is satisfied for  any pair of i.i.d random permutations of size greater than $n_1$ with distribution invariant under conjugation. 
 We prove also that asymptotically, this expectation is at least of order $2\sqrt{n}$ which is the asymptotic behaviour of the uniform setting. More generally, in the case where the laws of the two permutations are not necessarily the same, we gibe a lower  bound for the expectation. 
In particular, we prove that if one of the permutations is invariant under conjugation and with a good control of the expectation of the number of its cycles, the limiting fluctuations of the length of the longest common subsequence  are of  Tracy-Widom type. This result holds independently of the law of the second permutation.
\end{abstract}
\keywords{Random permutations, longest increasing subsequence, longest common subsequence,  Tracy-Widom distribution.}
\section{Introduction and main results}
\paragraph{}
Let $\s$ be the symmetric group, namely the  group of permutations of $\inin$. Given $\sigma \in \s$,  $(\sigma(i_1),\dots,\sigma(i_k))$ is a subsequence of $\sigma$ of length $k$ if $ i_1<i_2<\dots<i_k$.  We denote by $LCS(\sigma_1,\sigma_2)$ the length of the longest common subsequence (LCS) of two permutations. 
\paragraph{}
In the sequel  of this article, we consider two sequences of random permutations ${(\sigma_{1,n})}_{n\geq 1}$ and ${(\sigma_{2,n})}_{n\geq 1}$ with joint distribution $\p$ and associated expectation $\e$  such that $\sigma_{1,n}$ and $\sigma_{2,n}$ are independent and supported on  $\s$.
The study of the $LCS$ of independent random permutations was initiated  by \cite{houdre2014central}. Recently \cite{MR3830132} showed that for i.i.d random permutations 
\begin{align*}
    \e(LCS(\sigma_{1,n},\sigma_{2,n}))\geq \sqrt[3]{n}.
\end{align*}
It is conjectured by \citep{MR3509473} that for i.i.d  random permutations,
\begin{align*}
    \e(LCS(\sigma_{1,n},\sigma_{2,n}))\geq \sqrt{n}.
\end{align*}
 In this article, we obtain asymptotic  bounds  in the scale of  $\sqrt{n}$ in the case where the law of at least one of the two permutations is invariant under conjugation.  We say that the law of $\sigma_n$ is\textit{ invariant under conjugation }if for any $\hat \sigma \in \s$, $\hat \sigma \circ\sigma_n \circ {\hat \sigma}^{-1}$  is equal in distribution to $\sigma_n$. 
\subsection{LCS of two independent random permutations with  distribution invariant under conjugation}
\paragraph*{}
In Theorem~\ref{thm6}, we give an asymptotic lower bound for the $LCS$ of two independent random permutations. Under a good control of the number of fixed points, we give a better  bound  in Proposition~\ref{thm7}. Finally, as an  application of Proposition~\ref{thm7}, we give an  asymptotically optimal lower bound for   i.i.d random permutations with distributions invariant under conjugation in  Corollary~\ref{thmp}.
\begin{theorem}\label{thm6}
Assume that for any $n\geq 1$, $\sigma_{1,n}$ and $\sigma_{2,n}$  are independent  and their distributions   are invariant under conjugation. Then
\begin{align*}
    \liminf_{n\to\infty}\frac{\e(LCS(\sigma_{1,n},\sigma_{2,n}))}{\sqrt{n}}\geq 2\sqrt{\theta} \simeq 0.564,
\end{align*}
where  $\theta$ is the unique solution of $G(2\sqrt{x})=\frac{2+x}{12}$,
\begin{align}
\nonumber
    G:= [0,2]&\to\left[0,\frac{1}{2}\right]
    \\x &\mapsto
    \int_{-1}^{1}\left(\Omega(s)- \left|s+\frac{x}{2}\right| - \frac{x}{2}\right)_+\mathrm{d} s,
    \label{e2}
\end{align}
and
\begin{align*}
\Omega(s):=\begin{cases}
\frac{2}{\pi}(s\arcsin({s})+\sqrt{1-s^2}) & \text{ if } |s|<1 \\ 
|s| & \text{ if } |s|\geq 1 
\end{cases}.
\end{align*}
\end{theorem}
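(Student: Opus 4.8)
The plan is to first reduce the two–permutation statement to a statement about a single conjugation–invariant permutation. By the very definition of the $LCS$ one has the identity $LCS(\sigma_{1,n},\sigma_{2,n})=LIS(\sigma_{2,n}^{-1}\circ\sigma_{1,n})$, where $LIS$ denotes the length of the longest increasing subsequence: a common subsequence is exactly a set of values whose positions increase simultaneously in both permutations, i.e.\ an increasing subsequence of the product $\tau_n:=\sigma_{2,n}^{-1}\circ\sigma_{1,n}$. Since $\sigma_{1,n}$ and $\sigma_{2,n}$ are independent and each law is invariant under conjugation, the law of $\tau_n$ is itself invariant under conjugation (conjugating $\tau_n$ by $\hat\sigma$ conjugates each factor and preserves the joint law). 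Hence it suffices to show $\liminf_n \e(LIS(\tau_n))/\sqrt{n}\ge 2\sqrt{\theta}$ for an arbitrary conjugation–invariant sequence $\tau_n$; conditionally on its cycle type $\mu$, $\tau_n$ is uniform on the conjugacy class $C_\mu$, which is the object I would analyse.

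Next I would pass to the Robinson--Schensted shape, $LIS(\tau_n)=\lambda_1$, the length of the first row of $\lambda=\lambda(\tau_n)$, and read everything in the Russian rescaling by $2\sqrt{n}$, in which the uniform/Plancherel limit shape is exactly the curve $\Omega$ of the statement (so the typical first row sits at the right endpoint $s=1$, i.e.\ $\lambda_1\simeq 2\sqrt n$). The structural input I would use is the robustness of increasing subsequences under transpositions: multiplying a permutation by a transposition changes $LIS$ by at most $1$ and, globally, displaces the rescaled profile by a controlled amount. Merging the cycles of $\tau_n$ into one long cycle uses $K_n-1$ transpositions, where $K_n$ is the number of cycles, and a uniform long cycle is close to a uniform permutation, with $\e(LIS)\ge(2-o(1))\sqrt n$ and shape concentrating on $\Omega$. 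This produces a family of lower bounds on the profile of $\lambda$ in which the cycle structure enters only through a rescaled displacement of $\Omega$: the left and right branches of the tent $|s+x/2|+x/2$ in \eqref{e2} are precisely the slope $\mp 1$ lines (columns/rows) against which the perturbed profile is compared, and $G(x)$ is the area of the part of $\Omega$ lying above that shifted tent, i.e.\ the number of boxes (suitably rescaled) that the comparison can certify.

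The heart of the argument is then a worst–case/variational step: over all admissible cycle structures one optimises the trade–off between the loss incurred by the transposition surgery and the area $G$ that the profile is forced to retain, subject to conservation of the total number of boxes ($=n$) together with the matching of the rescaled area $\tfrac{2+\theta}{12}$ computed for the extremal (VKLS) configuration. Equating the certified area to that quantity yields the self–consistent equation $G(2\sqrt{\theta})=\tfrac{2+\theta}{12}$; here the normalisations $G(0)=\int_{-1}^1(\Omega(s)-|s|)\,\mathrm ds=\tfrac12$ and $G(2)=0$, together with the monotonicity of $G$ and of $x\mapsto \tfrac{2+x^2/4}{12}$, guarantee that $\theta$ is the unique crossing point (indeed at $x=0$ the right side is $\tfrac16<\tfrac12$ and at $x=2$ it is $\tfrac14>0$), whence the bound $2\sqrt{\theta}$.

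I expect the main obstacle to be twofold. First, the comparison with the uniform shape must be made uniform over all conjugation–invariant laws, with no hypothesis on $K_n$: one must organise the surgery so that the incurred loss is captured exactly by the shift parameter of the tent, and show that no better profile survives box conservation — this is precisely where the non–optimality of $2\sqrt\theta\simeq0.564$ originates, since the surgery becomes lossy when $K_n$ is large (the true value being $2$, recovered under control of the cycles as in Proposition~\ref{thm7} and Corollary~\ref{thmp}). Second, one must justify the exchange of $\liminf$ and $\e$ and that the $o(\sqrt n)$ errors (concentration of $LIS$ for the long cycle, fluctuations of the shape around $\Omega$) are uniform in the law; I would handle this by combining the deterministic transposition inequalities with the known concentration of $LIS$ for uniform permutations and a dominated–convergence argument for the rescaled profile.
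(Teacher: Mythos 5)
Your reduction to a single conjugation-invariant permutation $\tau_n=\sigma_{2,n}^{-1}\circ\sigma_{1,n}$, the passage to the Robinson--Schensted profile via Greene's invariants, and the cycle-merging surgery (merging all cycles with transpositions at a total cost of $\#(\tau_n)$, then comparing with the VKLS shape of the resulting uniform long cycle) reproduce exactly the first half of the paper's argument (Theorem~\ref{thm20}, Lemma~\ref{basiclem}, Corollary~\ref{col11}, Lemma~\ref{lem21}). Your uniqueness argument for $\theta$ (monotonicity of $G$ versus $x\mapsto\frac{2+x^2/4}{12}$ and the values at the endpoints) is also fine.

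The gap is the ``worst-case/variational step''. Merging surgery plus conservation of boxes \emph{cannot} yield the constant $2\sqrt{\theta}$, because the surgery certifies $\ell(\tau_n)\geq\gamma\sqrt{n}$ only when the retained area beats the loss, i.e.\ when $2G(\gamma)\geq\#(\tau_n)/n$, and this degenerates for cycle-rich laws. Concretely, take $\tau_n$ uniform on fixed-point-free involutions: then $\#(\tau_n)=n/2$, and the best exponent your scheme certifies is $\gamma=G^{-1}(1/4)$; since $G$ is decreasing and $G(2\sqrt{\theta})=\frac{2+\theta}{12}\simeq 0.173<\frac14$, one has $G^{-1}(1/4)<2\sqrt{\theta}$, so no optimisation over the tent parameter recovers the claimed bound — and the trivial fixed-point bound is useless here as well. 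The paper's proof needs a genuinely second comparison that your proposal never invokes: when $\#(\sigma_n)>\frac{(2+\theta)n}{6}$, the elementary counting inequality $card(fix(\sigma_n^2))\geq 6\#(\sigma_n)-3\,card(fix(\sigma_n))-2n$ (Lemma~\ref{finlem}) shows that either $\sigma_n$ has $\geq 2\sqrt{\theta n}$ fixed points (done directly), or $\sigma_n$ restricted to $fix(\sigma_n^2)$ is a conjugation-invariant \emph{involution} on a set of size $\geq\theta n-6\sqrt{\theta n}$; a second Markov-operator surgery, merging pairs of fixed points (cost $card(fix)$, which is $O(\sqrt{m})$ in expectation for the target uniform involution), together with M\'eliot's theorem that the Gelfand measure also has the VKLS limit shape, gives $\ell\geq(2-o(1))\sqrt{\theta n}$ (Corollaries~\ref{cor24} and~\ref{fincor}). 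The constant $\theta$ is then defined precisely by balancing the two regimes at the threshold $\#(\sigma_n)=\frac{(2+\theta)n}{6}$, i.e.\ $G(2\sqrt{\theta})=\frac{2+\theta}{12}$; it does not come from a box-conservation variational principle over cycle types, and without the involution/Gelfand half your argument proves only the weaker bound $G^{-1}\bigl(\liminf_n \e(\#(\tau_n))/(2n)\bigr)$ of Theorem~\ref{thm8}.
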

The function $\Omega$ appears as the Vershik-Kerov-Lagan-Shepp limit shape. For more details, one can  see Figure~\ref{vkls} and Lemma~\ref{lem10}. We will prove this result in Subsection~\ref{pthm6} by comparison with the uniform distribution on $\s$ and the uniform distribution on the set of involutions. 
\paragraph{}
Under a good control of the number of fixed points, we  obtain a better bound. 
\begin{proposition}\label{thm7}
Let $0\leq \alpha\leq 2$.  Assume that for any $n\geq 1$, $\sigma_{1,n}$ and $\sigma_{2,n}$  are independent  and their distributions   are invariant under conjugation.
\begin{itemize}
    \item[-]  If
\begin{equation}\label{cond1}
    \lim_{n\to\infty} \max(\p(\sigma_{1,n}(1)=1),\p(\sigma_{2,n}(1)=1))=0,
\end{equation}
then
\begin{align} \label{res}
\liminf_{n\to\infty}\frac{\e(LCS(\sigma_{1,n},\sigma_{2,n}))}{\sqrt{n}}\geq 2.
\end{align}
\item[-]
If
\begin{equation} \label{cond2}
    \liminf_{n\to\infty} \sqrt{n}{\p(\sigma_{1,n}(1)=1)\p(\sigma_{2,n}(1)=1)}\geq \alpha,
\end{equation}
 then
\begin{align} 
\liminf_{n\to\infty}\frac{\e(LCS(\sigma_{1,n},\sigma_{2,n}))}{\sqrt{n}}\geq \alpha.
\end{align}
\end{itemize}

\end{proposition}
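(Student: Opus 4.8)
The plan is to reduce both statements to the longest increasing subsequence of a single permutation via the identity $LCS(\sigma_{1,n},\sigma_{2,n})=LIS(\sigma_{2,n}^{-1}\circ\sigma_{1,n})$, where $LIS$ denotes the length of the longest increasing subsequence: a common subsequence is exactly a set of values on which both $\sigma_{1,n}^{-1}$ and $\sigma_{2,n}^{-1}$ are increasing, and relabelling positions through $\sigma_{1,n}^{-1}$ turns such a set into an increasing subsequence of $\sigma_{2,n}^{-1}\circ\sigma_{1,n}$. The second bullet, under \eqref{cond2}, is the easy half. The key observation is that the common fixed points of $\sigma_{1,n}$ and $\sigma_{2,n}$ automatically form an increasing common subsequence, so that
\begin{equation*}
LCS(\sigma_{1,n},\sigma_{2,n})\geq \#\{j\in\inin:\sigma_{1,n}(j)=j=\sigma_{2,n}(j)\}.
\end{equation*}
Taking expectations, using independence and then conjugation invariance (which forces $\p(\sigma_{i,n}(j)=j)=\p(\sigma_{i,n}(1)=1)$ for every $j$), the right-hand side has expectation $n\,\p(\sigma_{1,n}(1)=1)\p(\sigma_{2,n}(1)=1)$. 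Dividing by $\sqrt{n}$ and passing to the $\liminf$ reduces the claim to \eqref{cond2} verbatim.

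For the first bullet I would first record that $\pi_n:=\sigma_{2,n}^{-1}\circ\sigma_{1,n}$ is itself invariant under conjugation: conjugating $\pi_n$ by any $\hat\sigma$ conjugates $\sigma_{1,n}$ and $\sigma_{2,n}$ simultaneously, and since both are conjugation invariant and independent the joint law of the pair is preserved, hence so is the law of the product. Moreover $\pi_n(1)=1$ occurs exactly when $\sigma_{1,n}(1)=\sigma_{2,n}(1)$; computing this from the conjugation-invariant marginals (each uniform on $\inin\setminus\{1\}$ away from its atom at $1$) gives $\p(\pi_n(1)=1)\to 0$ under \eqref{cond1}. Thus the bound \eqref{res} is equivalent to the single-permutation statement: if $\pi_n$ is invariant under conjugation with $\p(\pi_n(1)=1)\to 0$, then $\liminf_n \e(LIS(\pi_n))/\sqrt{n}\geq 2$.

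To prove this I would condition on the cycle type of $\pi_n$, which by conjugation invariance makes $\pi_n$ uniform on the corresponding conjugacy class, and then invoke the comparison underlying Theorem~\ref{thm6} together with the Vershik--Kerov--Logan--Shepp limit shape $\Omega$ of Lemma~\ref{lem10}. Through $RSK$ the fixed points of $\pi_n$ are precisely the feature that deforms the limiting Young diagram away from $\Omega$; the constant $2\sqrt{\theta}$ of Theorem~\ref{thm6} arises by letting an adversary choose the worst admissible fixed-point fraction, and the role of \eqref{cond1} is to force that fraction to $0$, at which point the extremal configuration is the fixed-point-free one and the comparison returns the full Vershik--Kerov--Logan--Shepp value, i.e. a longest first row of length $(2-o(1))\sqrt{n}$. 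Concretely, I expect to show that along any sequence of conjugacy classes with a vanishing proportion of fixed points the expected longest increasing subsequence of the uniform element is at least $(2-o(1))\sqrt{n}$, and then integrate this estimate over the random cycle type of $\pi_n$ using \eqref{cond1}.

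The main obstacle is exactly this last quantitative step, since Theorem~\ref{thm6} as stated only delivers $2\sqrt{\theta}$: to reach $2$ I must re-run its comparison while tracking the dependence on the number of fixed points and show that the loss is continuous in, and vanishes with, the fixed-point fraction. Passing this to the expectation (rather than a merely high-probability bound) is the delicate part; I would handle it by splitting the conditioning on the cycle type into a fixed-point-rich regime, where the deterministic inequality $LIS(\pi_n)\geq \#\{j:\pi_n(j)=j\}$ already dominates $2\sqrt{n}$, and a fixed-point-poor regime, where the limit-shape comparison applies, and then use that \eqref{cond1} makes the expected fixed-point count $o(n)$ to establish enough uniform integrability to exchange limit and expectation.
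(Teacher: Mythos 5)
Your second bullet and your initial reductions are fine and coincide with the paper's: common fixed points give the bound under \eqref{cond2}, and for $\pi_n:=\sigma_{1,n}^{-1}\circ\sigma_{2,n}$ the computation $\p(\pi_n(1)=1)=\p(\sigma_{1,n}(1)=1)\p(\sigma_{2,n}(1)=1)+\frac{(1-\p(\sigma_{1,n}(1)=1))(1-\p(\sigma_{2,n}(1)=1))}{n-1}=o(1)$ is exactly the paper's. The genuine gap is in the first bullet, where you collapse everything to the single-permutation claim ``$\pi_n$ invariant under conjugation with $\p(\pi_n(1)=1)\to 0$ implies $\liminf_n\e(\ell(\pi_n))/\sqrt n\geq 2$'' and propose to prove it by re-running the comparison behind Theorem~\ref{thm6} while tracking only the fixed-point fraction. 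This misidentifies what the comparison loses: the coupling of Lemma~\ref{basiclem} replaces $\pi_n$ by an $Ew(0)$ permutation at a cost of $\#(\pi_n)$, the \emph{total number of cycles}, not the number of fixed points, and a conjugation-invariant permutation with no fixed points at all can have $\Theta(n)$ cycles (all $2$-cycles, or all $3$-cycles), making the loss $\#(\pi_n)\gg\sqrt n$ and the comparison vacuous. For the same reason your account of where $2\sqrt\theta$ comes from is inaccurate: in the paper it arises from balancing the cycle count against a separate comparison with uniform involutions (Corollary~\ref{cor24}), not from a worst-case fixed-point fraction. Your reduced statement, with fixed points as the only control, is essentially the conjecture the author states after Corollary~\ref{thmp} and leaves open; within the paper's toolbox the fixed-point-poor regime of your split still contains cycle types (e.g.\ all $3$-cycles) that neither the $Ew(0)$ comparison nor the involution comparison can handle.

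What the paper does instead — and what your plan is missing — is to \emph{keep} the product structure rather than reduce to an arbitrary conjugation-invariant $\pi_n$. The graph-counting Lemma~\ref{lem15} uses the independence and the conjugation invariance of \emph{both} factors to show that for every fixed $k\geq 2$,
\begin{align*}
\p\left(c_1(\sigma_{1,n}^{-1}\circ\sigma_{2,n})=k\right)\leq \frac{C}{n}+C'\bigl(\p(\sigma_{1,n}(1)=1)+\p(\sigma_{2,n}(1)=1)\bigr),
\end{align*}
so \eqref{cond1} kills \emph{all} short cycles of the quotient, not just its fixed points. Combined with the identity $\e(\#(\pi_n))=n\,\e(1/c_1(\pi_n))$, valid by conjugation invariance, and a truncation of the cycle length at a fixed level $n_\beta$, this yields $\e(\#(\pi_n))/n<\beta+o(1)$, after which Corollary~\ref{col14} (whose hypothesis is few cycles, not few fixed points) gives \eqref{res}. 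To repair your argument you would need precisely such a mechanism converting ``each factor rarely fixes a point'' into ``the product rarely has a short cycle through a given point''; conditioning on the cycle type of $\pi_n$ and limit-shape estimates alone cannot produce it.
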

Consequently, we obtain  the following result for i.i.d random permutations.
\begin{corollary}
\label{thmp}
Assume that  for any $n\geq 1$, $\sigma_{1,n}$ and $\sigma_{2,n}$  are two independent and identically distributed  random permutations with distribution invariant under conjugation. Then
\begin{align*}
    \liminf_{n\to \infty}\frac{  \e(LCS(\sigma_{1,n},\sigma_{2,n}))}{\sqrt{n}}\geq2.
\end{align*}
\end{corollary}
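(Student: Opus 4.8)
The plan is to derive Corollary~\ref{thmp} from Proposition~\ref{thm7} by a dichotomy on the asymptotics of the fixed-point probability. Since $\sigma_{1,n}$ and $\sigma_{2,n}$ are identically distributed, set $p_n := \p(\sigma_{1,n}(1)=1)=\p(\sigma_{2,n}(1)=1)$ and $a_n := \e(LCS(\sigma_{1,n},\sigma_{2,n}))/\sqrt n$. The two bullets of Proposition~\ref{thm7} handle precisely the two regimes $p_n\to 0$ and $p_n$ bounded away from $0$: in the first, condition~\eqref{cond1} holds and the first bullet yields $\liminf a_n\geq 2$; in the second, $\sqrt n\,p_n^2\to\infty$, so condition~\eqref{cond2} holds with $\alpha=2$ and the second bullet again yields $\liminf a_n\geq 2$. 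The only difficulty is that $p_n$ need not fall neatly into one of these two regimes, so I would interpolate between them via a subsequence argument.

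Concretely, let $L:=\liminf_{n\to\infty}a_n$ and pick a subsequence $(n_k)$ with $a_{n_k}\to L$. Since $p_{n_k}\in[0,1]$, by compactness I can extract a further subsequence $(n_{k_j})$ along which $p_{n_{k_j}}\to p$ for some $p\in[0,1]$. Both bullets of Proposition~\ref{thm7} apply verbatim to any sequence of independent pairs of conjugation-invariant permutations, in particular to the subsequence $(\sigma_{1,n_{k_j}},\sigma_{2,n_{k_j}})_j$, for which the liminf of $a$ equals $L$.

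I then split into two cases. If $p=0$, then $p_{n_{k_j}}\to 0$, so the subsequence satisfies~\eqref{cond1} and the first bullet gives $L=\liminf_j a_{n_{k_j}}\geq 2$. If $p>0$, then $\sqrt{n_{k_j}}\,p_{n_{k_j}}^2\to\infty$ because $\sqrt{n_{k_j}}\to\infty$ and $p_{n_{k_j}}\to p>0$; hence the subsequence satisfies~\eqref{cond2} with $\alpha=2$, and the second bullet gives $L\geq 2$. In either case $L\geq 2$, which is exactly the claim.

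I do not expect any genuine obstacle here: all of the analytic work is carried by Proposition~\ref{thm7}, and the reduction is purely a compactness argument. The single point demanding care is that the first bullet of Proposition~\ref{thm7} requires the genuine limit of the fixed-point probabilities to vanish (not merely a liminf), and passing to the subsequence on which $p_n$ actually converges is what licenses invoking it. Note also that only the equality $p_{1,n}=p_{2,n}$ coming from identical distribution is used in this deduction.
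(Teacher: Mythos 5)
Your proof is correct and follows essentially the same route as the paper: both deduce the corollary from the two bullets of Proposition~\ref{thm7} via a dichotomy on $p_n=\p(\sigma_{1,n}(1)=1)$, applied along subsequences. The paper implements the dichotomy with the explicit threshold $p_n\geq \sqrt{2}/\sqrt[4]{n}$ (so that \eqref{cond2} holds with $\alpha=2$ on one set of indices and \eqref{cond1} on the complement), whereas you extract a convergent subsequence of $(p_n)$ by compactness; this merely makes explicit the subsequence step the paper leaves implicit.
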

We conjecture that we can get rid of  \eqref{cond1} and \eqref{cond2}; the stability under conjugation is sufficient to obtain \eqref{res} which is equivalent to  replace $2\sqrt\theta$ by $2$ in Theorem~\ref{thm6}.  
We will prove   Proposition~\ref{thm7}  and Corollary~\ref{thmp} in Subsection~\ref{pthm7}. The idea of  the proof is  to study the longest increasing subsequence of $\sigma_{1,n}^{-1}\circ \sigma_{2,n}$ knowing that under a good control of the number of fixed points of the two permutations, the number of cycles of $\sigma_{1,n}^{-1}\circ \sigma_{2,n}$ is sufficiently small to compare it with the uniform distribution.  
\subsection{LCS of two independent random permutations where one of the   distributions is invariant under conjugation }
\paragraph{}
When  $\sigma_{2,n}$ is not invariant under conjugation, we give an asymptotic lower  bound of $ \frac{\e(LCS(\sigma_{1,n},\sigma_{2,n}))}{\sqrt{n}}$ in Theorem~\ref{thm8}. Moreover, we prove in Proposition~\ref{thm5} that under a good  control of the number of cycles of $\sigma_{1,n}$,   $\lim_{n\to \infty} \frac{\e(LCS(\sigma_{1,n},\sigma_{2,n}))}{\sqrt{n}}=2$ and under a stronger control, we have Tracy-Widom fluctuations for $LCS(\sigma_{1,n},\sigma_{2,n})$.
\begin{theorem} \label{thm8}
Assume that for any $n\geq 1$, $\sigma_{1,n}$ and $\sigma_{2,n}$  are independent  and  the law  of $\sigma_{1,n}$ is invariant under conjugation. Then 
\begin{align*}
    \liminf_{n\to \infty} \frac{\e(LCS(\sigma_{1,n},\sigma_{2,n}))}{\sqrt{n}}\geq G^{-1} \left({\liminf_{n\to\infty}\frac{\e(\#(\sigma_{1,n}))}{2n}}\right),
\end{align*}
where $\#(\sigma)$ is the number of cycles of $\sigma$ and $G$ is defined in \eqref{e2}.\\
In particular, if  
$\lim_{n\to\infty}\e\left(\frac{\#(\sigma_{1,n})}{n}\right)=0$, we have 
\begin{align*} 
\liminf_{n \to \infty} \frac{\e\left({LCS(\sigma_{1,n},\sigma_{2,n})}\right)}{\sqrt{n}}\geq2.
\end{align*}
\end{theorem}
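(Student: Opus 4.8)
The plan is to reduce the statement to a lower bound on the length of a longest increasing subsequence (LIS) of a well chosen product, and then to feed the number of cycles of $\sigma_{1,n}$ into the limit-shape machinery behind $G$. First I would use the standard identity $LCS(\sigma_{1,n},\sigma_{2,n})=LIS(\sigma_{1,n}^{-1}\circ\sigma_{2,n})$, which turns the problem into estimating the LIS of a product. Conditioning on $\sigma_{2,n}=\pi$ and on the cycle type $\lambda$ of $\sigma_{1,n}$ (so that $k:=\#(\sigma_{1,n})$ is its number of parts), invariance under conjugation makes $\sigma_{1,n}^{-1}$ uniform on the conjugacy class $C_\lambda$; writing $\rho$ for this uniform element (note $\#(\rho)=k$), the target becomes a lower bound, \emph{uniform in the fixed permutation} $\pi$, on $\e_\rho\!\left(LIS(\rho\circ\pi)\right)$, the expected LIS of a uniform element of a conjugacy class right-multiplied by $\pi$. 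In particular the case $\pi=\mathrm{id}$ is the LIS of a uniform element of $C_\lambda$, a conjugation-invariant permutation with prescribed number of cycles, and this is where $G$ must enter.

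Next I would run the RSK analysis on this conjugation-invariant ensemble. By Lemma~\ref{lem10} the rescaled boundary of the RSK shape is governed by the limit shape $\Omega$, and I would reuse the comparison argument of Theorem~\ref{thm6} to produce a lower bound on the first row, hence on the LIS. The number of cycles enters as a geometric constraint on the limiting diagram: it forces a truncation encoded precisely by the shifted wedge $|s+\tfrac{x}{2}|+\tfrac{x}{2}$ appearing in \eqref{e2}, arranged so that the normalized cycle count matches the residual area $G(x)$, where $x$ is the scaled first row (consistently with $G(0)=\tfrac12$, which corresponds to $n$ boxes, and $G(2)=0$). Since $G$ is continuous and strictly decreasing on $[0,2]$, this relation inverts, giving conditionally on the type an estimate of the form $\e\!\left(LIS\mid\lambda\right)\ge \sqrt n\,\bigl(G^{-1}(k/2n)-o(1)\bigr)$.

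To pass from the conditional estimate to the stated bound I would then average over the cycle type. The correspondence between residual area and cycle count is linear in the number of cycles at the level of expectations, so $\e(\#(\sigma_{1,n}))/2n$ is exactly the quantity that the limit-shape bound controls; alternatively one invokes Jensen together with the convexity of $G^{-1}$, which I would verify directly from \eqref{e2}, to get $\e_\lambda\!\left(G^{-1}(k/2n)\right)\ge G^{-1}\!\left(\e(\#(\sigma_{1,n}))/2n\right)$. Dividing by $\sqrt n$, taking $\liminf$, and using the continuity and monotonicity of $G^{-1}$ yields the first display. The \emph{in particular} is then immediate: if $\e(\#(\sigma_{1,n})/n)\to 0$ the argument of $G^{-1}$ tends to $0$, and $G^{-1}(0)=2$ because $G(2)=0$.

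I expect the genuine obstacle to be the passage from $LIS(\rho)$ to $LIS(\rho\circ\pi)$ for an arbitrary fixed $\pi$, that is, the requirement that the bound be uniform over the law of $\sigma_{2,n}$; this is precisely the feature that makes the result independent of the second permutation. The limit-shape input naturally describes $\rho$ itself (the conjugation-invariant factor), whereas the adversarial $\pi$ multiplies on the right and is \emph{not} washed out by the uniform conjugation that generates $\rho$. I would attack it by showing that right multiplication by a fixed permutation cannot lower the limit-shape bound on the first row at the scale $\sqrt n$, for instance through an interlacing or coupling of the RSK shapes of $\rho$ and $\rho\circ\pi$, or by absorbing $\pi$ into the conjugation so that its effect on the limiting profile is negligible. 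Making this robustness precise, uniformly in $\pi$, is the crux of the argument.
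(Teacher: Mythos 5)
You have correctly isolated the step that carries the whole theorem, but you have not supplied it, and the route you sketch for it does not work. After conditioning on $\sigma_{2,n}=\pi$ and on the cycle type, your ensemble is $\rho\circ\pi$ with $\rho$ uniform on a conjugacy class; this permutation is \emph{not} invariant under conjugation, so neither Lemma~\ref{lem10} nor any of the limit-shape statements in the paper applies to it off the shelf. The assertion you hope to prove --- that right multiplication by a fixed $\pi$ cannot lower the limit-shape bound on the first row at scale $\sqrt n$, uniformly in $\pi$ --- is not a tractable side lemma: it is essentially Theorem~\ref{thm8} itself, and ``interlacing of RSK shapes'' or ``absorbing $\pi$ into the conjugation'' gives no control, since (as you yourself observe) the conjugation randomness does not wash out a right factor. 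The parts of your plan that do match the paper are the outer layers: the reduction $LCS(\sigma_{1,n},\sigma_{2,n})=\ell(\sigma_{1,n}^{-1}\circ\sigma_{2,n})$ (Proposition~\ref{prop}), the mechanism by which $\#(\sigma_{1,n})$ removed boxes against the Vershik--Kerov--Logan--Shepp profile produces $G^{-1}(\#/(2n))$ (Corollary~\ref{col11} together with Lemma~\ref{lem21}), the Jensen step via convexity of $G^{-1}$, and $G^{-1}(0)=2$ for the ``in particular'' clause; your heuristic about the shifted wedge is a fuzzy but correct reading of \eqref{e2}.

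The paper closes the gap you flagged with a two-stage coupling through the cycle-merging Markov operator $T$ of Lemma~\ref{basiclem}, and the key observation is that the adversarial $\pi$ is absorbed by a \emph{uniform} permutation, not by conjugation: (i) if $u_n$ is uniform and independent of $\sigma_{2,n}$, then $u_n^{-1}\circ\sigma_{2,n}$ is uniform whatever the law of $\sigma_{2,n}$, so $\lambda(u_n^{-1}\circ\sigma_{2,n})$ is Plancherel and the shape bound of Corollary~\ref{col11} holds; (ii) $T(u_n)\sim Ew(0)$, and by the Greene-theorem stability \eqref{eq14} (Corollary~\ref{col13}, Lemma~\ref{lem16}) all partial sums of the shape of $(T(u_n))^{-1}\circ\sigma_{2,n}$ differ from those of $\lambda(u_n^{-1}\circ\sigma_{2,n})$ by at most $\#(u_n)=O_{\p}(\log n)$, which is negligible at scale $n$; (iii) since $T(\sigma_{1,n})\sim Ew(0)$ as well --- this is the only place invariance under conjugation of $\sigma_{1,n}$ is used --- one has $(T(\sigma_{1,n}),\sigma_{2,n})\overset{d}{=}(T(u_n),\sigma_{2,n})$, and a final application of the same stability bound, now at cost $\#(\sigma_{1,n})$, combined with the criterion of Lemma~\ref{lem21} that $\ell\geq\gamma\sqrt n$ as soon as $\sum_i(\lambda_i-\gamma\sqrt n)_+\geq\#(\sigma_{1,n})$, yields for every $\varepsilon>0$ that $\p\left(LCS(\sigma_{1,n},\sigma_{2,n})/\sqrt n>G^{-1}\left(\#(\sigma_{1,n})/(2n)+\varepsilon\right)-\varepsilon\right)\to1$, after which Jensen finishes exactly as you describe. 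Without this replacement of your conjugacy-class-conditional ensemble by the single coupling to $Ew(0)$, the proposal does not close.
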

\begin{proposition} \label{thm5}
Assume that for any $n\geq 1$, $\sigma_{1,n}$ and $\sigma_{2,n}$  are independent  and  the law  of $\sigma_{1,n}$ is invariant under conjugation.\begin{itemize}
    \item[-]
 If 
$\frac{\#(\sigma_{1,n})}{\sqrt[6]{n}}\overset{\mathbb{P}}\to 0,   
$
then $\forall s\in\mathbb{R}$,
$$\lim_{n \to \infty} \mathbb{P}\left(\frac{LCS(\sigma_{1,n},\sigma_{2,n})-2\sqrt{n}}{\sqrt[6]{n}}\leq s\right)=F_2(s),
$$ where  $F_2$ is  the cumulative distribution function of the  Tracy-Widom distribution. 
\item[-] If 
$\frac{\#(\sigma_{1,n})}{\sqrt{n}}\overset{\mathbb{P}}\to 0,   
$ then
$    \frac{LCS(\sigma_{1,n},\sigma_{2,n})}{\sqrt{n}} \overset{\mathbb{P}}\to 2.
$
\item [-] If  
$\lim_{n\to\infty}\e\left(\frac{\#(\sigma_{1,n})}{\sqrt{n}}\right)=0,   
$ then
$\lim_{n \to \infty} \frac{\e\left({LCS(\sigma_{1,n},\sigma_{2,n})}\right)}{\sqrt{n}}=2. \quad 
$\end{itemize}

\end{proposition}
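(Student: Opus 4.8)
The plan is to reduce the three statements to the longest increasing subsequence and then to compare $\sigma_{1,n}^{-1}\circ\sigma_{2,n}$ with a genuinely uniform permutation, the comparison error being controlled purely by the number of cycles of $\sigma_{1,n}$. First I would use the standard identity $LCS(\sigma_{1,n},\sigma_{2,n})=LIS(\sigma_{1,n}^{-1}\circ\sigma_{2,n})$ (matching the reduction already announced for Proposition~\ref{thm7}), together with the elementary robustness bound: if $\pi,\pi'\in\s$ agree outside a set of size $m$, then $|LIS(\pi)-LIS(\pi')|\le m$, because an optimal increasing subsequence of $\pi$ loses at most the $m$ disputed indices when read through $\pi'$, and symmetrically. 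Note also that $\sigma_{1,n}^{-1}$ is again invariant under conjugation and has the same number of cycles as $\sigma_{1,n}$.

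The heart of the argument is a coupling lemma. I would realize $\sigma_{1,n}^{-1}$ and a uniform permutation on one common uniform word $W=(W_1,\dots,W_n)$, chosen independent of $\sigma_{2,n}$. Conditionally on its cycle type, $\sigma_{1,n}^{-1}$ is uniform on the corresponding conjugacy class — this is exactly where invariance under conjugation is used — hence may be built by cutting $W$ into consecutive blocks of the prescribed lengths and setting $W_j\mapsto W_{j+1}$ inside each block, the last letter of a block returning to its first. Cutting the same word $W$ according to the Feller coupling of a uniform permutation produces $\rho_0$ uniform on $\s$, built on the identical straight-through rule away from its own cut points. The two permutations therefore disagree only at cut points, i.e. at most $\#(\sigma_{1,n})+\#(\rho_0)$ positions. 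Right-multiplying by the independent $\sigma_{2,n}$ preserves the number of disagreements and turns $\rho_0$ into $\rho_n:=\rho_0\circ\sigma_{2,n}$, which is exactly uniform on $\s$ since $\rho_0$ is uniform and independent of $\sigma_{2,n}$. The robustness bound then yields the key inequality
\[
|LIS(\sigma_{1,n}^{-1}\circ\sigma_{2,n})-LIS(\rho_n)|\le \#(\sigma_{1,n})+\#(\rho_0),
\]
with $\rho_n$ uniform and $\#(\rho_0)$ the number of cycles of an auxiliary uniform permutation.

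With this inequality the three items follow by inserting the known behaviour of $LIS$ of a uniform permutation and using that $\#(\rho_0)$ is of order $\log n$, hence negligible at every scale appearing here. For the first item, the Baik–Deift–Johansson theorem gives $(LIS(\rho_n)-2\sqrt n)/\sqrt[6]{n}\Rightarrow F_2$; since the hypothesis $\#(\sigma_{1,n})/\sqrt[6]{n}\overset{\mathbb{P}}\to 0$ and $\#(\rho_0)/\sqrt[6]{n}\overset{\mathbb{P}}\to 0$ make the coupling error vanish, Slutsky's lemma transfers the limit to $(LIS(\sigma_{1,n}^{-1}\circ\sigma_{2,n})-2\sqrt n)/\sqrt[6]{n}$, giving the Tracy–Widom limit at every $s$. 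For the second item, the law of large numbers $LIS(\rho_n)/\sqrt n\overset{\mathbb{P}}\to 2$ combined with $(\#(\sigma_{1,n})+\#(\rho_0))/\sqrt n\overset{\mathbb{P}}\to 0$ gives the stated convergence in probability. For the third, $\e(LIS(\rho_n))/\sqrt n\to 2$ and the bound $|\e(LIS(\sigma_{1,n}^{-1}\circ\sigma_{2,n}))-\e(LIS(\rho_n))|\le \e(\#(\sigma_{1,n}))+\e(\#(\rho_0))$, divided by $\sqrt n$, tend to $0$ by the hypothesis $\e(\#(\sigma_{1,n})/\sqrt n)\to 0$ and $\e(\#(\rho_0))=o(\sqrt n)$.

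The main obstacle is the coupling lemma itself: one must make $\sigma_{1,n}^{-1}$ and $\rho_0$ live on a single uniform word $W$ while keeping the random cut structure of $\sigma_{1,n}^{-1}$ (dictated by its cycle type) jointly with the Feller cuts that produce a \emph{genuinely} uniform $\rho_0$, and verify that the shared-word construction realizes both exact laws, that the pair $(\sigma_{1,n}^{-1},\sigma_{2,n})$ keeps its true joint law, and that the disagreement count is bounded by cycles alone, uniformly in $\sigma_{2,n}$. Once that inequality is in place, the rest is a routine application of Slutsky's lemma together with the classical first-order and fluctuation results for the longest increasing subsequence of a uniform permutation and the standard estimate that a uniform permutation has of order $\log n$ cycles.
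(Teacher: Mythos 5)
Your proposal is correct, and while it follows the paper's announced strategy at the top level (a coupling between $\sigma_{1,n}$ and a uniform permutation, with the error controlled by cycle counts), the coupling mechanism itself is genuinely different from the paper's. The paper obtains the key inequality via the cycle-merging Markov operator $T$ (Lemma~\ref{lem16}): $|LCS(\sigma_1,\sigma_2)-LCS(T(\sigma_1),\sigma_2)|\le \#(\sigma_1)$, where $T$ merges all cycles using transpositions through a common point and, crucially, sends \emph{every} conjugation-invariant law to $Ew(0)$, the uniform law on $n$-cycles; it then applies this twice --- once to $\sigma_{1,n}$ to land on $Ew(0)$, and once (inside Corollary~\ref{col17}) to a uniform permutation to bridge $Ew(0)$ and the uniform law, which is where the $O(\log n)$ cycle count of a uniform permutation enters, playing exactly the role of your $\#(\rho_0)$. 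You instead bridge the conjugation-invariant law and the uniform law in a single step, realizing both permutations on one shared uniform word with two cut structures (the cycle type of $\sigma_{1,n}^{-1}$ versus Feller cuts), with disagreement set of size at most $\#(\sigma_{1,n})+\#(\rho_0)$; your robustness bound for $LIS$ under modification at $m$ positions is the same elementary surgery on increasing subsequences that the paper uses to prove Lemma~\ref{basiclem}, and the endgame (the identity of Proposition~\ref{prop}, Baik--Deift--Johansson, Vershik--Kerov, the $\log n$ cycle estimate, Slutsky) is identical, with error terms of the same order. What the paper's route buys: $T$ is deterministic--combinatorial per step, produces the clean intermediate statement for $Ew(0)$ (Corollary~\ref{col17}), and is reused throughout the paper (Corollaries~\ref{col13} and~\ref{col14}, Theorems~\ref{thm6} and~\ref{thm8}). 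What your route buys: no pivot through $Ew(0)$ and a probabilistically transparent use of the structural fact that a conjugation-invariant permutation is uniform on its conjugacy class given its cycle type; the price is the verification you correctly flag --- that cutting a uniform word at an independent composition distributed as the uniform cycle type yields an \emph{exactly} uniform $\rho_0$, and that the joint law of $(\sigma_{1,n}^{-1},\sigma_{2,n})$ is preserved --- both of which are routine provided you fix a measurable ordering of the blocks given the cycle type and take the word $W$ independent of the cycle type, the Feller variables, and $\sigma_{2,n}$ jointly, as your sketch indicates.
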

Note that in Theorem \ref{thm8} and in Proposition~\ref{thm5},  we do not have any assumption on the distribution of $\sigma_{2,n}$. The proof in Subsection~\ref{pthm8} is based on a coupling argument    between  $\sigma_{1,n}$ and  a uniform  permutation.
\section*{Acknowledgements}
\paragraph*{}
The author would like to acknowledge many extremely useful conversations
with  Mylène Maïda, Adrien Hardy and Christan Houdré and their great help to improve the coherence of this paper. He would also  acknowledge a useful discussion with Pierre-Loïc Méliot about Gelfand measures. This work is partially supported  by the Labex CEMPI (ANR-11-LABX-0007-01).
\section{Proof of results}
\subsection{General tools}
\paragraph{}
Given $\sigma \in \s$ and
$1\leq i_1<i_2<\dots<i_k\leq n$, the subsequence $(\sigma(i_1),\dots,\sigma(i_k))$  is an increasing subsequence of $\sigma$ if  $\sigma(i_1)<\dots<\sigma(i_k)$. We denote by $\ell(\sigma)$   the length of the longest increasing subsequence of $\sigma$. \\ For example, for the permutation \begin{equation*}\sigma=\begin{pmatrix}
 1& 2 & 3 & 4 & 5 \\ 
 5& 3 & 2 & 1 & 4 
\end{pmatrix},\end{equation*} we have $\ell(\sigma)=2$.
The study of the longest common subsequence is strongly related to the notion of  longest  increasing subsequence. More precisely, we have the following.   
\begin{proposition} \label{prop}
Let $\sigma_1,\sigma_2\in\s$. 
\begin{align*}
    LCS(\sigma_1,\sigma_2)=\ell(\sigma_1^{-1}\circ\sigma_2)=\ell(\sigma_2^{-1}\circ\sigma_1).
\end{align*}
\end{proposition}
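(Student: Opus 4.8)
The plan is to exhibit an explicit length-preserving bijection between common subsequences of the pair $(\sigma_1,\sigma_2)$ and increasing subsequences of the single permutation $\tau:=\sigma_1^{-1}\circ\sigma_2$. The key observation is that a common subsequence is nothing but a set of values occupying simultaneously increasing positions in both words $\sigma_1$ and $\sigma_2$, and that reading off those positions inside $\sigma_2$ converts the constraint coming from $\sigma_1$ into an increasing-subsequence condition for $\tau$.

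First I would unwind the definitions. A common subsequence of length $k$ is a tuple of (necessarily distinct) values $(v_1,\dots,v_k)$ that appears as a subsequence of both words. Writing $a_j:=\sigma_1^{-1}(v_j)$ and $b_j:=\sigma_2^{-1}(v_j)$ for the positions of $v_j$ in $\sigma_1$ and in $\sigma_2$ respectively, the requirement that $(v_1,\dots,v_k)$ be a subsequence of $\sigma_1$ is $a_1<\dots<a_k$, and being a subsequence of $\sigma_2$ is $b_1<\dots<b_k$.

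Next I would change variables, parametrizing the common subsequence by the increasing positions $b_1<\dots<b_k$ in $\sigma_2$ rather than by the values themselves. Since $v_j=\sigma_2(b_j)$, the position of $v_j$ in $\sigma_1$ is $a_j=\sigma_1^{-1}(\sigma_2(b_j))=\tau(b_j)$. Hence the surviving constraint $a_1<\dots<a_k$ reads $\tau(b_1)<\dots<\tau(b_k)$, which together with $b_1<\dots<b_k$ is precisely the statement that $b_1,\dots,b_k$ index an increasing subsequence of $\tau$. This correspondence is a bijection and preserves $k$, so taking maxima over $k$ yields $LCS(\sigma_1,\sigma_2)=\ell(\tau)=\ell(\sigma_1^{-1}\circ\sigma_2)$.

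Finally, for the second equality I would invoke the symmetry $LCS(\sigma_1,\sigma_2)=LCS(\sigma_2,\sigma_1)$, which is immediate from the definition, and apply the first equality with the roles of $\sigma_1$ and $\sigma_2$ exchanged; alternatively one notes that $\sigma_2^{-1}\circ\sigma_1=(\sigma_1^{-1}\circ\sigma_2)^{-1}$ together with $\ell(\sigma)=\ell(\sigma^{-1})$, the latter holding because inverting a permutation interchanges the roles of positions and values while sending increasing subsequences to increasing subsequences. I do not anticipate a genuine obstacle here: the entire content is careful bookkeeping of which of $\sigma_i$ or $\sigma_i^{-1}$ records positions and which records values, so the only thing to watch is placing the inverse on the correct factor when composing.
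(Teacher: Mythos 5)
Your proposal is correct and is in substance the paper's own argument: the paper left-composes both words with $\sigma_1^{-1}$, reducing to $LCS(Id_n,\sigma_1^{-1}\circ\sigma_2)$ and noting that subsequences of the identity are exactly the increasing ones, which is precisely your position/value bijection written out explicitly (your $b_j$'s are the positions, your $\tau(b_j)$'s the relabeled values). Your handling of the second equality, by symmetry of $LCS$ or via $\ell(\sigma)=\ell(\sigma^{-1})$, is likewise the intended routine step, so nothing further is needed.
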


\begin{proof}
It is clear that the length of the longest common subsequence is invariant under left composition. Consequently 
\begin{align*}
    LCS(\sigma_1,\sigma_2)=LCS(\sigma_1^{-1}\circ\sigma_1,\sigma_1^{-1}\circ\sigma_2)=LCS(Id_n,\sigma_1^{-1}\circ\sigma_2).
\end{align*}

Observe that by definition, the subsequences of $Id_n$ are the increasing subsequences which concludes the proof. 
\end{proof}
 We will use in the remainder of this paper the Robinson–Schensted correspondence \citep*{RSKR,MR0121305} or the Robinson–Schensted–Knuth correspondence \citep*{RSKK}.    We denote  by $$\lambda(\sigma)=\{\lambda_i(\sigma)\}_{i\geq 1}.$$ the shape of the image of $\sigma$ by  this correspondence. 
  We will not include here detailed description of the algorithm. For further reading, we recommend \citep*[Chapter 3]{Sagan2001}. 
\paragraph{}
We denote by  \begin{align*}
\mathfrak{I}_1(\sigma):&=\{s\subset\{1,2,\dots,n\};\; \forall i,j \in s,\; (i-j)(\sigma(i)-\sigma(j))\geq 0 \},
\\\mathfrak{I}_{k+1}(\sigma):&=\{s\cup s',\; s\in \mathfrak{I}_k,\;s'\in \mathfrak{I}_1\}.
\end{align*}
The link to the longest increasing subsequence is given by the following result.\begin{lemma} \label{RSKLEMMA} \citep*{GREENE1974254}
For any permutation $ \sigma\in \mathfrak{S}_n$,
\begin{align*}
\max_{s\in \mathfrak{I}_i(\sigma)} |s| =\sum_{k=1}^i \lambda_k(\sigma).
\end{align*}
In particular, $$\ell(\sigma)=\max_{s\in \mathfrak{I}_1(\sigma)} |s| =\lambda_1(\sigma).$$
\end{lemma}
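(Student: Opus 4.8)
The statement is Greene's theorem, and I would reconstruct the by-now standard argument, organised around the fact that the quantities $a_i(\sigma):=\max_{s\in\mathfrak{I}_i(\sigma)}|s|$ are invariants of the Knuth (plactic) equivalence class of $\sigma$. It is convenient first to rephrase things geometrically: to $\sigma$ associate the points $(k,\sigma(k))$, $1\le k\le n$, ordered by $(k,x)\preceq(l,y)$ iff $k\le l$ and $x\le y$. An increasing subsequence is exactly a chain for $\preceq$, so $\mathfrak{I}_i(\sigma)$ consists of the index sets whose points are coverable by $i$ chains, and the claim is $a_i(\sigma)=\lambda_1(\sigma)+\dots+\lambda_i(\sigma)$; the case $i=1$ is Schensted's theorem $\ell(\sigma)=\lambda_1(\sigma)$.

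The heart of the argument is to show that each of the two elementary Knuth transformations --- the adjacent exchanges $yzx\leftrightarrow yxz$ when $x<y\le z$, and $xzy\leftrightarrow zxy$ when $x\le y<z$ --- leaves every $a_i$ unchanged. I would verify this by a local re-routing: starting from an optimal family of $i$ increasing subsequences realising $a_i$ for one word, one checks that the at most three positions affected by the transformation can be redistributed among the chains so as to produce a family of equal total size in the transformed word, and symmetrically in the reverse direction; the inequalities $x<y\le z$ (respectively $x\le y<z$) are precisely what guarantee that such a redistribution is always available. This bookkeeping, carried out once for each transformation and each of the finitely many incidence patterns of the three positions with an optimal family, is the main obstacle and the only genuinely technical point.

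Granting the invariance, the conclusion is short. By Robinson--Schensted theory $\sigma$ is Knuth-equivalent to the reading word $w_P$ of its insertion tableau $P$ (rows read left-to-right, bottom-to-top), which has shape $\lambda=\lambda(\sigma)$, so $a_i(\sigma)=a_i(w_P)$ and it suffices to evaluate $a_i(w_P)$. In $w_P$ each row of $P$ reads as an increasing subsequence, and because $\lambda_1\ge\lambda_2\ge\cdots$ the union of the first $i$ rows lies in $\mathfrak{I}_i$ and has size $\lambda_1+\dots+\lambda_i$, giving $a_i(w_P)\ge\lambda_1+\dots+\lambda_i$. Conversely each column of $P$ reads in $w_P$ as a \emph{decreasing} subsequence, and these columns partition the $n$ positions into antichains of sizes $\lambda'_1,\dots,\lambda'_{\lambda_1}$ (the conjugate partition); since any increasing subsequence meets any decreasing one in at most one position, a union of $i$ increasing subsequences contains at most $\min(i,\lambda'_m)$ positions from the $m$-th column, whence $a_i(w_P)\le\sum_m\min(i,\lambda'_m)=\lambda_1+\dots+\lambda_i$, the last equality counting the cells of $\lambda$ lying in its first $i$ rows. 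The two bounds coincide, proving $a_i(\sigma)=\sum_{k=1}^i\lambda_k(\sigma)$; specialising to $i=1$ recovers $\ell(\sigma)=\max_{s\in\mathfrak{I}_1(\sigma)}|s|=\lambda_1(\sigma)$. As an alternative to the Knuth-invariance route one can follow Greene's original lattice-theoretic argument or compute the $a_i$ through the local growth rules of the RSK correspondence, but the plactic approach keeps both inequalities uniform and minimises the case analysis.
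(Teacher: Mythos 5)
The paper never proves this lemma: it is imported verbatim from \citet{GREENE1974254} as a known black box, and nothing downstream depends on its proof, only on the statement. So your attempt can only be compared with the literature, and against that standard it is correct and follows the standard modern route --- the plactic proof (as in Fulton's \emph{Young Tableaux} or Sagan's book on the symmetric group) rather than Greene's original argument, which analyses the insertion procedure directly. Your reduction is sound: the row reading word $w_P$ of the insertion tableau is Knuth-equivalent to $\sigma$; each row of $P$ reads as an increasing subsequence in $w_P$ while each column reads as a decreasing one, giving the two bounds; and the counting identity $\sum_m \min(i,\lambda'_m)=\sum_{k=1}^i \lambda_k(\sigma)$ is exactly the count of cells of $\lambda(\sigma)$ in its first $i$ rows. (Note also that the paper's $\mathfrak{I}_i$ allows overlapping unions of increasing sets, which is harmless, since the union's cardinality counts each position once.) The one caveat is that the Knuth-invariance of every $a_i$ --- which you rightly identify as the heart of the matter --- is stated with a plan rather than executed: the delicate configuration is when an optimal family of chains distributes the three affected letters over two different chains, so that one must splice the tails of two chains rather than relabel within the affected window, and the stated inequalities $x<y\le z$ and $x\le y<z$ are what make the splice compatible. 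That finite case analysis is precisely what occupies the published proofs; as a blind reconstruction your proposal is complete in architecture, with the standard technical step deferred, and relative to the paper itself there is simply no internal proof to compare against.
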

 Let $L_{\lambda(\sigma)}$ be the height function of $\lambda(\sigma)$   rotated by $\frac{3\pi}{4}$  and extended by the function $x\mapsto |x|$ to obtain a function defined on $\mathbb{R}$.\\ For example,  if $\lambda(\sigma)=(7,5,2,1,1,\underline{0})$, then the associated function $L_{\lambda(\sigma)}$ is represented by Figure \ref{figL}. 
\begin{figure}[H]
\centering
\begin{tikzpicture}    [/pgfplots/y=0.5cm, /pgfplots/x=0.5cm]
      \begin{axis}[
    axis x line=center,
    axis y line=center,
    xmin=0, xmax=10,
    ymin=0, ymax=10, clip=false,
    ytick={0},
	xtick={0},
    minor xtick={0,1,2,3,3,4,5,6,7,8,9},
    minor ytick={0,1,2,3,3,4,5,6,7,8,9},
    grid=both,
    legend pos=north west,
    ymajorgrids=false,
    xmajorgrids=false, anchor=origin,
    grid style=dashed    , rotate around={45:(rel axis cs:0,0)}
,
]

\addplot[
    color=blue,
        line width=3pt,
    ]
    coordinates {
    (0,10)(0,7)(1,7)(1,5)(2,5)(2,2)(3,2)(3,1)(5,1)(5,0)(10,0)
    };
 
\end{axis}
\begin{axis}[
	axis x line=center,
    axis y line=center,
    xmin=-7.07, xmax=7.07,
    ymin=0, ymax=8, anchor=origin, clip=false,
    xtick={-7,-6,-5,-4,-3,-2,-1,0,1,2,3,4,5,6,7},
    ytick={0,1,2,3,3,4,5,6,7,8},
    legend pos=north west,
    ymajorgrids=false,
    xmajorgrids=false,rotate around={0:(rel axis cs:0,0)},
    grid style=dashed];
\end{axis}
    \end{tikzpicture}
    \caption{ $L_{(7,5,2,1,1,\underline{0})}$}
     \label{figL}
\end{figure}
The image of the uniform permutation by the Robinson-Schensted correspondence is known as the Plancherel measure. Its typical shape was studied separately by \cite{LOGAN1977206} and  \cite{MR0480398}. Stronger results have been proved by \cite{Vershik1985}.  In 1993, Kerov studied the limiting fluctuations  but did not publish his results. One can see \citep{10.1007/978-94-010-0524-1_3} for further  details.

\paragraph{}
To prove our results,  we will use the Markov operator  $T$ defined on $\s$ and associated to the stochastic matrix 
$\left[\frac{\mathrm{1}_{A_{\sigma_1}}(\sigma_2)}{card(A_{\sigma_1})}\right]_{\sigma_1,\sigma_2 \in \s}$
where 
\begin{equation*}
    A_\sigma =  \begin{cases} 
    \{\sigma\} & \text{if } \#(\sigma)=1\\ 
      \{\rho \in \s, \sigma^{-1}\circ\rho=(i_1,i_2)\circ(i_1,i_3)\dots\circ(i_1,i_{\#(\sigma)}) \text{ and } \#(\rho)=1 \} &\text{if } \#(\sigma)>1 
    \end{cases}.
\end{equation*}
We recall that $\#(\sigma)$ is the number of cycles of $\sigma$.
$T$ is then the Markov operator mapping a permutation $\sigma$ to a permutation uniformly chosen  among the permutations obtained by merging the cycles of $\sigma$ using transpositions having all a common point.   
Note that $A_\sigma$ is not empty since any choice of one point in each cycle  gives a possible $(i_1,i_2,\dots i_{\#(\sigma)})$ and a correspondent  permutation $\rho$.
\begin{lemma}\label{basiclem}
For any permutation $\sigma$,
\begin{itemize}
    \item[-] Almost surely, 
    \begin{equation}\label{eq13}
    |\ell(T(\sigma))-\ell(\sigma)|\leq \#(\sigma).
        \end{equation}
    \item[-] More generally, almost surely,
\begin{align}\label{eq14} 
\max_{i\geq 1} \left|\sum_{k=1}^i\left( \lambda_k(\sigma)-\lambda_k\left(T(\sigma)\right)\right)\right|\leq \#(\sigma).
\end{align}
\end{itemize}
Moreover, for any  random permutation $\sigma_n$ invariant under conjugation on $\s$, the law of $T(\sigma_n)$ is the uniform distribution on permutations with a unique cycle.
\end{lemma}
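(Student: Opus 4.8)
The plan is to derive the two inequalities from Greene's theorem (Lemma~\ref{RSKLEMMA}) after observing that $\sigma$ and $T(\sigma)$ differ in very few positions, and to obtain the last statement from an equivariance property of the kernel of $T$ under conjugation.

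First, for the two inequalities, I would write $T(\sigma)=\sigma\circ\tau$, where $\tau=(i_1,i_2)\circ(i_1,i_3)\circ\cdots\circ(i_1,i_{\#(\sigma)})$ is the product appearing in the definition of $A_\sigma$. A direct computation shows that $\tau$ is a single $\#(\sigma)$-cycle supported on $S:=\{i_1,\dots,i_{\#(\sigma)}\}$, so that $T(\sigma)(x)=\sigma(\tau(x))=\sigma(x)$ for every $x\notin S$. Thus $\sigma$ and $T(\sigma)$ coincide outside the set $S$, which has at most $\#(\sigma)$ elements. The key point is then a removal argument. Fix $i$ and let $U$ be an element of $\mathfrak{I}_i(\sigma)$ of maximal size, so that $|U|=\sum_{k=1}^i\lambda_k(\sigma)$ by Lemma~\ref{RSKLEMMA}. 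Deleting from $U$ the positions lying in $S$ yields a set $U'$ of size at least $|U|-\#(\sigma)$; since $\sigma$ and $T(\sigma)$ agree on the complement of $S$, the relative order of any two points of $U'$ is the same for both permutations, so each of the $i$ chains of $U$ restricts to an increasing subsequence of $T(\sigma)$ and hence $U'\in\mathfrak{I}_i(T(\sigma))$. This gives $\sum_{k=1}^i\lambda_k(T(\sigma))\geq\sum_{k=1}^i\lambda_k(\sigma)-\#(\sigma)$, and exchanging the roles of $\sigma$ and $T(\sigma)$ (the set $S$ is the same) yields the reverse inequality, proving \eqref{eq14}. The bound \eqref{eq13} is the special case $i=1$ together with $\ell=\lambda_1$.

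For the last statement, I would first note that every element of $A_\sigma$ has a single cycle by definition, so $T(\sigma)$ is always an $n$-cycle. It then suffices to show that the law of $T(\sigma_n)$ is invariant under conjugation, since the $n$-cycles form a single conjugacy class of $\s$ and any conjugation-invariant probability measure is constant on each conjugacy class. The invariance follows from an equivariance property of the transition probabilities $P(\sigma,\rho):=\mathbbm{1}_{A_\sigma}(\rho)/|A_\sigma|$. Indeed, for any $\hat\sigma\in\s$ one has $(\hat\sigma\circ\sigma\circ\hat\sigma^{-1})^{-1}\circ(\hat\sigma\circ\rho\circ\hat\sigma^{-1})=\hat\sigma\circ(\sigma^{-1}\circ\rho)\circ\hat\sigma^{-1}$, and conjugation sends the product $(i_1,i_2)\circ\cdots\circ(i_1,i_{\#(\sigma)})$ to $(\hat\sigma(i_1),\hat\sigma(i_2))\circ\cdots\circ(\hat\sigma(i_1),\hat\sigma(i_{\#(\sigma)}))$ while preserving the number of cycles. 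Hence $\rho\mapsto\hat\sigma\circ\rho\circ\hat\sigma^{-1}$ is a bijection from $A_\sigma$ onto $A_{\hat\sigma\circ\sigma\circ\hat\sigma^{-1}}$, so that $P(\hat\sigma\circ\sigma\circ\hat\sigma^{-1},\hat\sigma\circ\rho\circ\hat\sigma^{-1})=P(\sigma,\rho)$. Combining this identity with the conjugation-invariance of $\sigma_n$ and a change of summation variable shows that $\hat\sigma\circ T(\sigma_n)\circ\hat\sigma^{-1}$ and $T(\sigma_n)$ have the same law, which concludes.

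The main obstacle is the first inequality. A naive approach writing $T(\sigma)$ as a product of $\#(\sigma)-1$ transpositions and bounding the effect of each one fails, because a single transposition can change $\ell$ by $2$ (for instance, transposing the two endpoints of the identity), which would only give the far weaker bound $2(\#(\sigma)-1)$. The correct viewpoint is that the specific product defining $T$ moves only $\#(\sigma)$ points, so that $\sigma$ and $T(\sigma)$ differ in at most $\#(\sigma)$ positions; Greene's description of the partial sums $\sum_{k\leq i}\lambda_k$ as maximal sizes of unions of increasing subsequences then converts this observation directly into the stated bound.
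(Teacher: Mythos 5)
Your proof is correct and takes essentially the same route as the paper's: the same removal argument (writing $T(\sigma)=\sigma\circ\tau$ with $\tau$ supported on $\#(\sigma)$ points, deleting those points from a maximal element of $\mathfrak{I}_i(\sigma)$, and invoking Greene's theorem, with the reverse inequality by symmetry) establishes \eqref{eq13} and \eqref{eq14}, and the same conjugation-equivariance of the kernel $\rho\mapsto\hat\sigma\circ\rho\circ\hat\sigma^{-1}$ mapping $A_\sigma$ bijectively onto $A_{\hat\sigma\circ\sigma\circ\hat\sigma^{-1}}$ yields the $Ew(0)$ law, since $n$-cycles form a single conjugacy class. Your closing observation about why a transposition-by-transposition bound would only give $2(\#(\sigma)-1)$ matches the paper's own remark that the sharper bound comes from merging all cycles through a common point.
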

Note that the  uniform distribution on permutations with a unique cycle is also known as the Ewens's distribution with parameter $0$. We denote it by $Ew(0)$.
\begin{proof}
The law of $T(\sigma_n)$ is clearly invariant under conjugation. Indeed, let $\sigma,\rho\in\s$.
\begin{align*}
    \p(T(\sigma_n)=\sigma)&=\mathbf{1}_{\#(\sigma)=1} \sum_{\hat{\sigma}\in\s}\mathbf{1}_{\sigma \in A_{\hat\sigma}} \frac{\p(\sigma_n=\hat\sigma)}{card(A_{\hat\sigma})}
    \\ &=\mathbf{1}_{\#(\sigma)=1} \sum_{\hat{\sigma}\in\s}\mathbf{1}_{{\rho\circ\sigma\circ\rho^{-1}} \in A_{{\rho\circ\hat\sigma\circ\rho^{-1}}}} \frac{\p({\rho\circ\sigma_n\circ\rho^{-1}}={\rho\circ\hat\sigma\circ\rho^{-1}})}{card(A_{\rho\circ\hat\sigma\circ\rho^{-1}})}
    \\ &=\mathbf{1}_{\#(\sigma)=1} \sum_{\hat{\sigma}\in\s}\mathbf{1}_{{\rho\circ\sigma\circ\rho^{-1}} \in A_{{\hat\sigma}}} \frac{\p({\rho\circ\sigma_n\circ\rho^{-1}}=\hat\sigma)}{card(A_{\hat\sigma})}
    \\&=\mathbf{1}_{\#({\rho\circ\sigma\circ\rho^{-1}})=1} \sum_{\hat{\sigma}\in\s}\mathbf{1}_{{\rho\circ\sigma\circ\rho^{-1}} \in A_{{\hat\sigma}}} \frac{\p({\sigma_n}=\hat\sigma)}{card(A_{\hat\sigma})}
    \\&=   \p(T(\sigma_n)=\rho\circ\sigma\circ\rho^{-1}).
    \end{align*}
    Moreover, by construction, almost surely,
$\#(T(\sigma_n))=1$. Consequently, the law of  $T(\sigma_n)$ is $Ew(0)$.
\paragraph{}
Let $\sigma$ be  a permutation.  By definition of $\ell(\sigma)$,  there exists ${i_1<i_2<\dots<i_ {\ell(\sigma)}}$ such that ${\sigma(i_1)<\dots<\sigma(i_{\ell(\sigma)})}$. Let $\rho=\sigma\circ(j_1,j_2)\circ(j_1,j_3)\dots\circ(j_1,j_{\#(\sigma)})$ be a permutation with a unique cycle and $i'_1,i'_2,\dots,i'_m$ be   the same sequence as $i_1,i_2,\dots, i_{\ell(\sigma)}$ after removing $j_1$, $j_2$, \dots, $j_{\#(\sigma)}$  if needed. We have   $\ell(\sigma)-\#(\sigma)\leq m$ and $\sigma(i'_1)<\dots<\sigma(i'_{m})$. Knowing that  $\forall i\notin \{j_1,j_2,\dots ,j_{\#(\sigma)}\}$, $\rho (i)=\sigma(i)$, so that  $$\rho (i'_1)<\dots<\rho (i'_{m}).$$ Therefore,  $m\leq \ell(\rho)$ and 
\begin{align*}
\ell(\sigma)-\ell(\rho )\leq \#({\sigma}).
\end{align*}
We can obtain the reverse inequality in \eqref{eq13} using the same techniques.
Similarly, to prove \eqref{eq14}, let $l\geq1$ and $\left\{i_1,i_2,\dots, i_{\sum_{k=1}^l \lambda_k(\sigma)}\right\}\in\mathfrak{I}_l(\sigma).$  Let
$i'_1,i'_2,\dots,i'_m$ be   the same sequence as $i_1,i_2,\dots, i_{\ell(\sigma)}$ after removing $j_1$, $j_2$, \dots  $,j_{\#(\sigma)}$  
if needed. We have   $\left\{i'_1,i'_2,\dots, i'_{m}\right\}\in\mathfrak{I}_l(\rho) $ and we conclude as in the proof of \eqref{eq13}.  
\end{proof}
For more details, one can see \citep{kammoun2018}. We used the same techniques of proof with a different Markov operator. Here, the bound is better thanks to the use of  the same point $i_1$ to merge cycles. 
\begin{lemma}\citep[Theorem~1.8]{kammoun2018}\label{lem10}
Assume that the distribution of $\sigma_n$ is $Ew(0)$.
Then for all $\varepsilon>0$,
\begin{align*}
\lim_{n\to \infty} \mathbb{P}\left(\sup_{s\in \mathbb{R}} \left|\frac{1}{\sqrt{2n}}L_{\lambda(\sigma_n)}\left({s}{\sqrt{2n}}\right)-\Omega(s)\right|<\varepsilon\right) =1,
\end{align*}
where we recall that \begin{align*}
\Omega(s):=\begin{cases}
\frac{2}{\pi}(s\arcsin({s})+\sqrt{1-s^2}) & \text{ if } |s|<1 \\ 
|s| & \text{ if } |s|\geq 1 
\end{cases}.
\end{align*}
\end{lemma}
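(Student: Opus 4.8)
The plan is to deduce the limit shape for $Ew(0)$ from the classical Vershik--Kerov--Logan--Shepp theorem for the Plancherel measure, using the Markov operator $T$ purely as a coupling device. First I would recall the VKLS theorem in the normalisation used here: if $\tau_n$ is uniform on $\s$, then $\tau_n$ is invariant under conjugation and $\frac{1}{\sqrt{2n}}L_{\lambda(\tau_n)}(s\sqrt{2n})$ converges to $\Omega(s)$, uniformly in $s$ and in probability. By the last assertion of Lemma~\ref{basiclem}, $T(\tau_n)$ has law $Ew(0)$; hence it suffices to realise the $Ew(0)$-distributed permutation as $T(\tau_n)$ on the same probability space and to prove that the rescaled profiles of $\lambda(\tau_n)$ and $\lambda(T(\tau_n))$ are uniformly close.

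Second, I would control the discrepancy between the two diagrams. Writing $\lambda=\lambda(\tau_n)$ and $\mu=\lambda(T(\tau_n))$, equation~\eqref{eq14} gives almost surely $\Delta_n:=\max_{i\ge1}\left|\sum_{k=1}^{i}\bigl(\lambda_k-\mu_k\bigr)\right|\le \#(\tau_n)$, and the same merging argument applied to unions of decreasing subsequences (that is, to the conjugate partitions, via the dual form of Lemma~\ref{RSKLEMMA}) yields the identical bound for the partial sums of the columns. Since $\tau_n$ is uniform, $\#(\tau_n)$ is a sum of independent Bernoulli variables with mean $H_n\sim\log n$, so $\#(\tau_n)=O(\log n)=o(\sqrt n)$ with probability tending to one.

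Third --- and this is the step I expect to be the technical heart --- I would convert this cumulative control into a sup-norm bound on the profiles. Writing $d_c(\lambda)$ for the number of boxes of $\lambda$ on the diagonal of content $c$, one has the exact identity $L_\lambda(c)=|c|+2\,d_c(\lambda)$, so it is enough to bound $\sup_c\bigl|d_c(\lambda)-d_c(\mu)\bigr|$. For $c\ge0$, $d_c(\lambda)=\max\{k:\lambda_k\ge k+c\}$; setting $a=d_c(\lambda)$ and $b=d_c(\mu)$ with $b<a$, the first $a$ rows of $\lambda$ have length $\ge a+c$ whereas rows $b+1,\dots,a$ of $\mu$ have length $\le b+c$, so that $(a-b)(a+c)\le S_a(\lambda)-S_b(\lambda)$ and $S_a(\mu)-S_b(\mu)\le(a-b)(b+c)$; inserting $|S_a(\lambda)-S_a(\mu)|\le\Delta_n$ and $|S_b(\lambda)-S_b(\mu)|\le\Delta_n$ then forces $(a-b)^2\le 2\Delta_n$. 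Hence $\sup_{c\ge0}|d_c(\lambda)-d_c(\mu)|\le\sqrt{2\Delta_n}$, and the dual (column) estimate gives the same for $c\le0$, whence $\sup_c\bigl|L_{\lambda}(c)-L_{\mu}(c)\bigr|\le 2\sqrt{2\,\#(\tau_n)}=o(\sqrt n)$. Dividing by $\sqrt{2n}$ and applying the triangle inequality against the VKLS limit concludes the proof. The main obstacle is exactly this last paragraph: isolating the quadratic gain $(a-b)^2\le2\Delta_n$, and ensuring that the identity $L_\lambda=|\cdot|+2\,d_{(\cdot)}$ together with the column estimate covers the whole real line, rather than only the half-line directly controlled by~\eqref{eq14}.
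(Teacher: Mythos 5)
Your proof is correct, but note first that the paper itself contains no proof of this lemma: it is imported wholesale from \citep[Theorem~1.8]{kammoun2018}, the paper remarking only (after Lemma~\ref{basiclem}) that it reuses ``the same techniques of proof with a different Markov operator''. Your argument is thus a self-contained reconstruction built from this paper's own toolkit, and it follows the same coupling philosophy as the cited reference: realise $Ew(0)$ as $T(\tau_n)$ with $\tau_n$ uniform (legitimate, since the uniform law is invariant under conjugation, so Lemma~\ref{basiclem} gives $T(\tau_n)\sim Ew(0)$, and $\#(\tau_n)$ is a sum of independent Bernoulli variables of mean $H_n\sim\log n$, hence $o(\sqrt n)$ with probability tending to one), then transfer the classical Logan--Shepp/Vershik--Kerov sup-norm convergence for the Plancherel measure across the coupling. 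Two points needed care and you handled both. First, \eqref{eq14} controls only row partial sums; the dual bound for column partial sums, which you genuinely need for contents $c\le 0$, is nowhere stated in the paper, but it follows by the same one-line argument because $\sigma$ and $T(\sigma)$ agree outside a set of $\#(\sigma)$ positions, so any union of $l$ decreasing subsequences loses at most $\#(\sigma)$ points (this is the full Greene theorem, of which Lemma~\ref{RSKLEMMA} quotes only the increasing half). Second, the technical heart --- upgrading the cumulative bound $\Delta_n\le\#(\tau_n)$ to a sup-norm bound on profiles --- checks out: writing $S_i$ for the $i$-th partial sum of the parts and $d_c$ for the number of boxes of content $c$, your inequalities $S_a(\lambda)-S_b(\lambda)\ge(a-b)(a+c)$ and $S_a(\mu)-S_b(\mu)\le(a-b)(b+c)$ for $a=d_c(\lambda)>b=d_c(\mu)$, $c\ge 0$, do force $(a-b)^2\le 2\Delta_n$ (the boundary case $d_c(\mu)=0$ even improves to $\Delta_n$), and with $L_\lambda(c)=|c|+2d_c(\lambda)$ at integer contents plus the $1$-Lipschitz property of profiles to cover non-integer arguments, this gives $\sup_s\left|L_{\lambda(\tau_n)}(s)-L_{\lambda(T(\tau_n))}(s)\right|\le 2\sqrt{2\#(\tau_n)}+O(1)=o(\sqrt n)$ in probability, which is all the final triangle inequality needs. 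In short: where the paper delegates to \citep{kammoun2018}, you supply precisely the missing quantitative lemma (closeness of all row and column partial sums implies closeness of profiles up to a square-root loss) that lets the coupling close on the sup-norm statement itself, rather than only on area-type functionals such as $G$ in Corollary~\ref{col11}, which is the only way the paper ever uses the limit shape.
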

 For the remainder of this paper, we will refer to this limiting shape as  the Vershik-Kerov-Logan-Shepp shape. See Figure \ref{vkls}\footnote{This figure is generated by DPPy \citep*{GaBaVa18}}.
This convergence is closely related to the Wigner's semi-circular law. For further details, one can see \citep{ss2,ss1,ss3}.

\begin{figure}[H] 
    \centering
\includegraphics[scale=0.4]{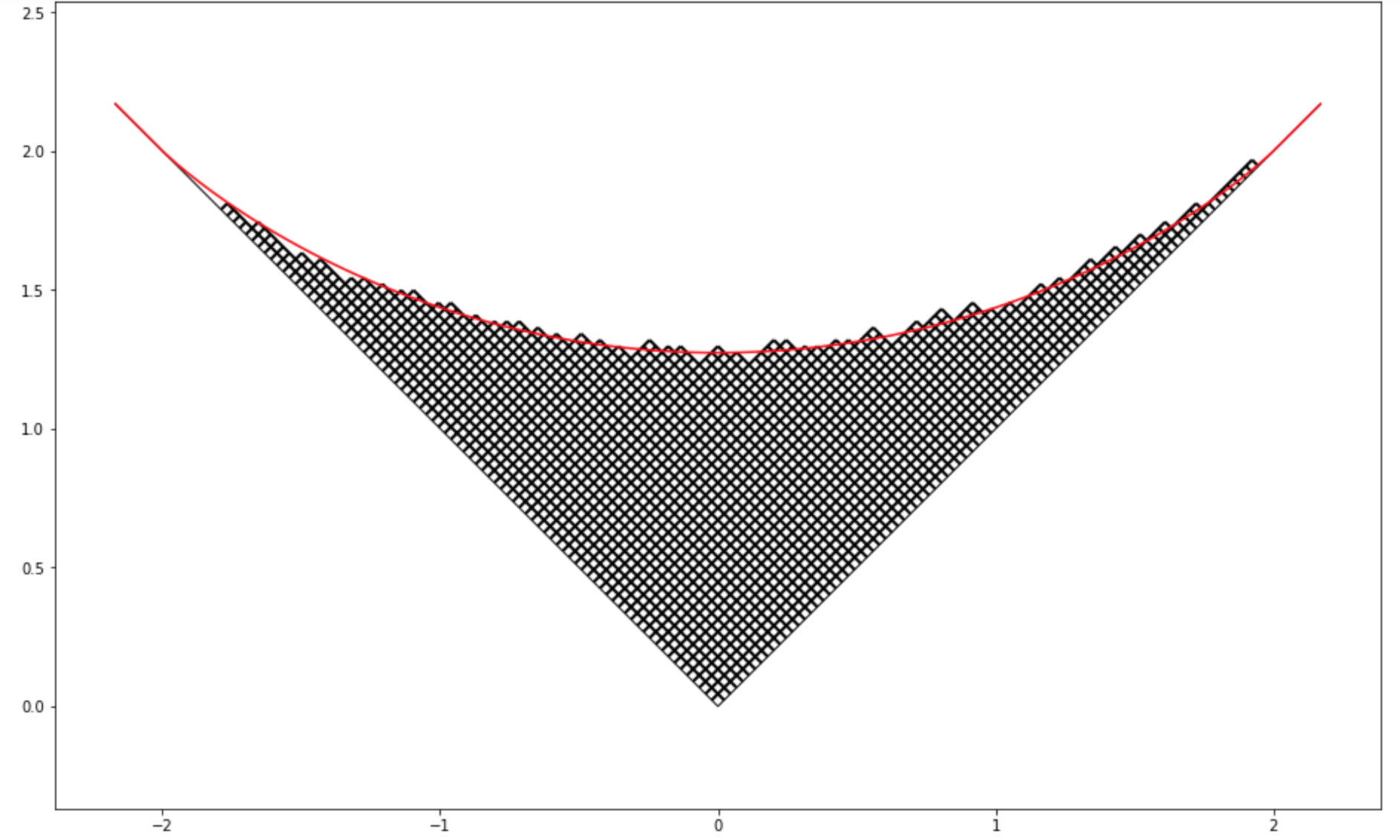}
    \caption{Illustration of the Vershik-Kerov-Logan-Shepp convergence }
    \label{vkls}
\end{figure}
\begin{corollary}\label{col11}
Assume that the distribution of $\sigma_n$ is $Ew(0)$. Then for any $0\leq\gamma\leq2$, for any $\varepsilon>0$,
\begin{equation}\label{gamma}
\p\left(\frac{\sum_{i=1}^{n}(\lambda_i(\sigma_n)-\gamma\sqrt{n})_{+}}{n}> 2 G(\gamma)-\varepsilon \right) \to 1.
\end{equation}
\end{corollary}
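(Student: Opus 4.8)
The plan is to reduce the probabilistic statement \eqref{gamma} to the deterministic convergence of the rescaled profile supplied by Lemma~\ref{lem10}, by means of an exact geometric identity. First I would rewrite the left-hand side of \eqref{gamma} as an integral against the profile function. The quantity $\sum_{i=1}^{n}(\lambda_i(\sigma_n)-\gamma\sqrt n)_+$ is exactly the number of boxes of the Young diagram $\lambda(\sigma_n)$ sitting in columns of index larger than $\gamma\sqrt n$, that is, the area of the diagram lying to the right of the vertical line $x=\gamma\sqrt n$. Rotating to the Russian coordinates in which $L_{\lambda(\sigma_n)}$ is defined and performing the change of variables $u=s\sqrt{2n}$ dictated by the scaling in Lemma~\ref{lem10}, this area becomes the integral of the profile above a tent-shaped lower boundary. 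Concretely, writing $h_n(s):=\frac{1}{\sqrt{2n}}L_{\lambda(\sigma_n)}(s\sqrt{2n})$, I expect the exact identity
\begin{equation*}
\frac1n\sum_{i=1}^{n}(\lambda_i(\sigma_n)-\gamma\sqrt n)_+ = 2\int_{\mathbb R}\bigl(h_n(s)-\max(|s|,\gamma-s)\bigr)_+\,\mathrm ds,
\end{equation*}
the factor $2$ coming from the area normalisation (one checks it at $\gamma=0$, where both sides equal $1$).

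Next I would identify the deterministic limit of the right-hand side with $2G(\gamma)$. Using the elementary identity $\max(|s|,\gamma-s)=\frac\gamma2+\bigl|s-\frac\gamma2\bigr|$, valid for $\gamma\ge 0$, together with the evenness of $\Omega$ and the substitution $s\mapsto -s$, I would rewrite
\begin{equation*}
\int_{\mathbb R}\bigl(\Omega(s)-\max(|s|,\gamma-s)\bigr)_+\,\mathrm ds = \int_{\mathbb R}\Bigl(\Omega(s)-\Bigl|s+\tfrac\gamma2\Bigr|-\tfrac\gamma2\Bigr)_+\,\mathrm ds = G(\gamma),
\end{equation*}
checking along the way that the integrand vanishes for $|s|>1$ (for $s>1$ both $\Omega(s)$ and the subtracted term equal $s$, while for $s<-1$ the difference is $-\gamma\le 0$), so that the integral is genuinely supported on $[-1,1]$ as in \eqref{e2}.

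Finally I would run the stochastic comparison, which is soft precisely because only a lower bound is required. On the event $E_n^{\delta}:=\{\sup_s|h_n(s)-\Omega(s)|<\delta\}$, the map $x\mapsto(x-m)_+$ being $1$-Lipschitz gives
\begin{equation*}
\int_{-1}^{1}\bigl(h_n(s)-\max(|s|,\gamma-s)\bigr)_+\,\mathrm ds \ge \int_{-1}^{1}\bigl(\Omega(s)-\max(|s|,\gamma-s)\bigr)_+\,\mathrm ds - 2\delta = G(\gamma)-2\delta.
\end{equation*}
Dropping the remaining nonnegative contribution of $\{|s|>1\}$ and multiplying by $2$, on $E_n^{\delta}$ one obtains $\frac1n\sum_{i}(\lambda_i-\gamma\sqrt n)_+\ge 2G(\gamma)-4\delta$. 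Choosing $\delta=\varepsilon/8$ and invoking $\p(E_n^{\delta})\to 1$ from Lemma~\ref{lem10} then yields \eqref{gamma}.

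The main obstacle is the first step: setting up the rotation and the rescaling so that the constant ($2$) and the tent function $\max(|s|,\gamma-s)$ come out exactly right and compatibly with the normalisation fixed in Lemma~\ref{lem10}. Once the identity is in place, the probabilistic conclusion follows at once from uniform convergence, and crucially the lower-bound direction lets me discard the tail $\{|s|>1\}$ rather than having to control the fluctuations of $\lambda_1\simeq 2\sqrt n$ there.
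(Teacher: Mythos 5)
Your proposal is correct and takes essentially the same route as the paper: the same geometric identity expressing $\frac{1}{n}\sum_{i}(\lambda_i(\sigma_n)-\gamma\sqrt{n})_+$ as twice the integral of the rescaled profile above a tent function, followed by Lemma~\ref{lem10} and restriction of the integral to $[-1,1]$, which suffices since only a lower bound is needed; the only deviation is that your tent $\max(|s|,\gamma-s)$ is the mirror image of the paper's $\left|s+\frac{\gamma}{2}\right|+\frac{\gamma}{2}$, which is immaterial by the evenness of $\Omega$ and which you correctly neutralize via the substitution $s\mapsto -s$. Your explicit Lipschitz estimate on the event $E_n^{\delta}$ simply spells out the convergence-in-probability step that the paper leaves implicit.
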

\begin{proof}
This is a direct application of Lemma \ref{lem10}. One can see that 
$\frac{\sum_{i=1}^{n}(\lambda_i(\sigma)-\gamma\sqrt{n})_{+}}{2n}$ is the area of the region delimited by  the curves of the functions $x\mapsto |x|$, $x \mapsto\gamma+x$ and $x\mapsto\frac{ L_{\lambda(\sigma)}(x\sqrt{2n})}{\sqrt{2n}}$, see Figure \ref{figL2figure}. By construction, this area is equal to
 $$ \int_{-\infty}^{\infty}\left(\frac{ L_{\lambda(\sigma)}(s\sqrt{2n})}{\sqrt{2n}}- \left|s+\frac{\gamma}{2}\right| - \frac{\gamma}{2}\right)_+\mathrm{d} s.$$
By Lemma \ref{lem10},  
\begin{align*}
    \int_{-1}^{1}\left(\frac{ L_{\lambda(\sigma)}(s\sqrt{2n})}{\sqrt{2n}}- \left|s+\frac{\gamma}{2}\right| - \frac{\gamma}{2}\right)_+\mathrm{d} s\overset{\p}{\to}G(\gamma).
\end{align*} We can  conclude then that
\begin{align*}
    \frac{\sum_{i=1}^{n}(\lambda_i(\sigma)-\gamma\sqrt{n})_{+}}{n}&=2 \int_{-\infty}^{\infty}\left(\frac{ L_{\lambda(\sigma)}(s\sqrt{2n})}{\sqrt{2n}}- \left|s+\frac{\gamma}{2}\right| - \frac{\gamma}{2}\right)_+\mathrm{d} s 
\\& \geq 2 \int_{-1}^{1}\left(\frac{ L_{\lambda(\sigma)}(s\sqrt{2n})}{\sqrt{2n}}- \left|s+\frac{\gamma}{2}\right| - \frac{\gamma}{2}\right)_+\mathrm{d} s 
\\ & \overset{\mathbb{P}}{\to}2 G(\gamma).
\end{align*}
This yields  \eqref{gamma}.
\end{proof}

\begin{figure}[H]
\centering
\begin{tikzpicture}    [/pgfplots/y=0.6cm, /pgfplots/x=0.6cm]
      \begin{axis}[
    axis x line=center,
    axis y line=center,
    xmin=0, xmax=10,
    ymin=0, ymax=10, clip=false,
    ytick={0},
	xtick={0},
    minor xtick={0,1,2,3,3,4,5,6,7,8,9},
    minor ytick={0,1,2,3,3,4,5,6,7,8,9},
    grid=both,
    legend pos=north west,
    ymajorgrids=false,
    xmajorgrids=false, anchor=origin,
    grid style=dashed    , rotate around={45:(rel axis cs:0,0)}
,
]

\addplot[
    color=blue,
        line width=3pt,
    ]
    coordinates {
    (0,10)(0,7)(1,7)(1,5)(2,5)(2,2)(3,2)(3,1)(5,1)(5,0)(10,0)
    };
    \draw [fill=cyan,cyan] (0,4) rectangle (2,5);
    \draw [fill=cyan,cyan] (0,4) rectangle (1,7);

\end{axis}
\begin{axis}[
	axis x line=center,
    axis y line=center,
    xmin=-7.07, xmax=7.07,
    ymin=0, ymax=8, anchor=origin, clip=false,
    xtick={0},
    ytick={0},
     minor ytick={0,1.41,2.83,4.24,5.65},
    legend pos=north west,
    ymajorgrids=false,
    xmajorgrids=false,rotate around={0:(rel axis cs:0,0)},
    grid style=dashed];
    \addplot[
    color=red,
        line width=3pt,
    ]
    coordinates {
    (-5.65,0)(0,5.65)
    };
        \addplot[
    color=red,
        line width=3pt,
    ]
    coordinates {
    (0,0)(-8,8)
    };
    \node (premier) at (0.3,5.65) {1};
    \node (second) at (0.5,2.82) {0.5};
    \node (secondz) at (0.7,1.41) {0.25};
    \node (secondez) at (0.7,4.24) {0.75};
\end{axis}

    \end{tikzpicture}
    \caption{ $\lambda=(7,2,2,1,1,\underline{0})$ and $ \gamma=1$} 
         \label{figL2figure}

\end{figure}
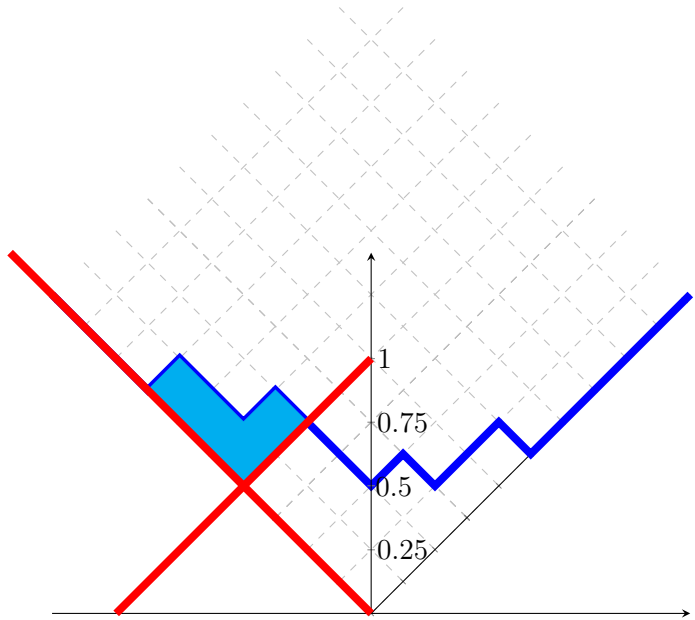
Note that it is not difficult to prove that $$\frac{\sum_{i=1}^{n}(\lambda_i(\sigma_n)-\gamma\sqrt{n})_{+}}{n}\overset{\p}\to 2 G(\gamma).$$
We skip the proof here as we only need \eqref{gamma} in the sequel. 

\begin{corollary} \label{col13}
For any permutation $\sigma$, for any $\alpha \geq 0$,  almost surely,
\begin{align*}
\left|{\sum_{i=1}^{\infty}(\lambda_i(\sigma)-\alpha\sqrt{n})_{+}} - {\sum_{i=1}^{\infty}(\lambda_i(T(\sigma))-\alpha\sqrt{n})_{+}}\right|  \leq {\#(\sigma)}.
\end{align*}
\end{corollary}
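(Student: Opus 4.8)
The plan is to reduce the statement to the partial-sum bound \eqref{eq14} of Lemma~\ref{basiclem} by means of a variational (Abel-type) rewriting of the truncated sum. Set $c:=\alpha\sqrt{n}$ and, for a partition $\mu=(\mu_1\geq\mu_2\geq\cdots)$ of $n$, write $S_k(\mu):=\sum_{j=1}^k\mu_j$ with the convention $S_0(\mu):=0$. The first step is to establish the elementary identity
\[
\sum_{i=1}^{\infty}(\mu_i-c)_+=\max_{k\geq 0}\bigl(S_k(\mu)-kc\bigr).
\]
This follows purely from the monotonicity of the parts: the increments of the map $k\mapsto S_k(\mu)-kc$ are $\mu_k-c$, which are nonnegative exactly while $\mu_k\geq c$ and nonpositive thereafter, so the maximum is attained at the last index $m$ with $\mu_m>c$ and equals $\sum_{k=1}^{m}(\mu_k-c)=\sum_{i}(\mu_i-c)_+$ (parts equal to $c$ contribute $0$ to both sides). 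Since $c=\alpha\sqrt{n}\geq 0$, the case $c=0$ reduces to $\max_k S_k(\mu)=S_\infty(\mu)=n$, which is also consistent.

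I would then apply this identity to the two partitions $\lambda(\sigma)$ and $\lambda(T(\sigma))$, both of which are partitions of $n$, so that every sum is finite and each maximum ranges over $0\leq k\leq n$. Writing $u_k:=S_k(\lambda(\sigma))-kc$ and $v_k:=S_k(\lambda(T(\sigma)))-kc$, the quantity to be bounded becomes $\bigl|\max_k u_k-\max_k v_k\bigr|$. Using the general inequality $\bigl|\max_k u_k-\max_k v_k\bigr|\leq\max_k|u_k-v_k|$ together with $u_k-v_k=\sum_{j=1}^{k}\bigl(\lambda_j(\sigma)-\lambda_j(T(\sigma))\bigr)$, I would conclude
\[
\left|\sum_{i=1}^{\infty}(\lambda_i(\sigma)-c)_+-\sum_{i=1}^{\infty}(\lambda_i(T(\sigma))-c)_+\right|
\leq\max_{k\geq 1}\left|\sum_{j=1}^{k}\bigl(\lambda_j(\sigma)-\lambda_j(T(\sigma))\bigr)\right|\leq\#(\sigma),
\]
where the last inequality is precisely \eqref{eq14}.

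The only genuine content lies in the variational identity of the first step: it converts the pointwise truncation $(\cdot-c)_+$, which does not interact well with a bound on \emph{partial sums}, into a maximum of partial sums, on which \eqref{eq14} delivers exactly the required uniform control. The remaining ingredients, namely the $1$-Lipschitz-type inequality $|\max u-\max v|\leq\max|u-v|$ and the finiteness of all sums (both $\lambda(\sigma)$ and $\lambda(T(\sigma))$ partition $n$), are routine. I therefore do not anticipate any real obstacle; the geometric area interpretation exploited in Corollary~\ref{col11} would furnish an alternative route, but the partial-sum reformulation above is the most direct way to invoke \eqref{eq14}.
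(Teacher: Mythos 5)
Your proposal is correct and takes essentially the same route as the paper: its proof also reduces the truncated sums to partial sums at the cut index $k=\max\{j\geq 1:\lambda_j(\sigma)>\alpha\sqrt{n}\}$ (equality for $\sigma$, a lower bound for $T(\sigma)$ at the same $k$) and then invokes \eqref{eq14}, obtaining the reverse inequality by exchanging the roles of $\sigma$ and $T(\sigma)$. Your variational identity $\sum_{i}(\mu_i-c)_+=\max_{k\geq 0}\bigl(S_k(\mu)-kc\bigr)$ combined with the $1$-Lipschitz property of the maximum is just a symmetric repackaging of that same argument.
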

\begin{proof}We prove first that
\begin{align*}
{\sum_{i=1}^{\infty}(\lambda_i(\sigma)-\alpha\sqrt{n})_{+}} - {\sum_{i=1}^{\infty}(\lambda_i(T(\sigma))-\alpha\sqrt{n})_{+}}  \leq {\#(\sigma)}.
\end{align*}
If  $\lambda_1(\sigma)\leq\alpha\sqrt{n}$, the inequality is trivial as the right hand side is non-negative and the left hand side is non-positive. Otherwise, let $k:= \max \{j\geq1, \lambda_j(\sigma)> \alpha\sqrt{n}\}.$
We have
\begin{align*}
{\sum_{i=1}^{\infty}(\lambda_i(\sigma)-\alpha\sqrt{n})_{+}} &=
{\sum_{i=1}^k(\lambda_i(\sigma)-\alpha\sqrt{n})_{+}} +{\sum_{i=k+1}^{\infty}(\lambda_i(\sigma_n)-\alpha\sqrt{n})_{+}}
\\&=\sum_{i=1}^k {(\lambda_i(\sigma)-\alpha\sqrt{n})},
\end{align*} 
and\begin{align*}
    {\sum_{i=1}^{\infty}(\lambda_i(T(\sigma))-\alpha\sqrt{n})_{+}} &\geq {\sum_{i=1}^{k}(\lambda_i(T(\sigma))-\alpha\sqrt{n})_{+}} 
    \geq \sum_{i=1}^k{( \lambda_i(T(\sigma))-\alpha\sqrt{n})}.
\end{align*}
Using \eqref{eq14}, we obtain 
\begin{align*}
{\sum_{i=1}^{\infty}(\lambda_i(\sigma)-\alpha\sqrt{n})_{+}} - {\sum_{i=1}^{\infty}(\lambda_i(T(\sigma))-\alpha\sqrt{n})_{+}} &\leq \sum_{i=1}^k{ \lambda_i(\sigma)-\lambda_i(T(\sigma))} \\&\leq {\#(\sigma)}.
\end{align*}
The reverse inequality is obtained by exchanging the role of $\sigma$ and $T(\sigma)$.
\end{proof}
\begin{corollary}\label{col14}
For any $\alpha<2$, there exist $\beta>0$ and $n_\alpha>0$  such that for any $n>n_\alpha$, for any random permutation $\sigma_n$ invariant under conjugation satisfying  
$\mathbb{E}(\#\sigma_n)<n\beta$, we have
\begin{equation*}
    \e(\ell(\sigma_n))\geq \alpha\sqrt{n}.
\end{equation*}
\end{corollary}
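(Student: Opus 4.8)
The plan is to avoid controlling $\ell(\sigma_n)$ directly through the operator $T$. The pointwise estimate $|\ell(\sigma_n)-\ell(T(\sigma_n))|\le\#(\sigma_n)$ of \eqref{eq13} is useless here, since a conjugation invariant permutation with $\e(\#(\sigma_n))<n\beta$ typically has $\#(\sigma_n)$ of order $n$, a scale that overwhelms the target scale $\sqrt{n}$. Instead I would work with the order $n$ functional $X_\gamma(\sigma):=\sum_{i\ge 1}(\lambda_i(\sigma)-\gamma\sqrt{n})_+$ for a well chosen $\gamma\in(\alpha,2)$: by Corollary~\ref{col13}, replacing $\sigma_n$ by $T(\sigma_n)$ perturbs $X_\gamma$ by at most $\#(\sigma_n)$, an additive error that is negligible compared with the typical size of $X_\gamma$ as soon as $\beta$ is small. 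The key observation is that $X_\gamma(\sigma)>0$ forces $\lambda_1(\sigma)>\gamma\sqrt{n}$, that is $\ell(\sigma)>\gamma\sqrt{n}$ by Lemma~\ref{RSKLEMMA}; so it suffices to show that $X_\gamma(\sigma_n)>0$ with probability close to $1$.

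To do so I would combine two ingredients. First, by Lemma~\ref{basiclem} the law of $T(\sigma_n)$ is $Ew(0)$, so Corollary~\ref{col11} gives, for any $\varepsilon>0$, that $\p\!\left(X_\gamma(T(\sigma_n))>(2G(\gamma)-\varepsilon)n\right)\to 1$, with a rate depending only on $\gamma$ and $\varepsilon$ and not on the law of $\sigma_n$. Second, Markov's inequality applied to $\e(\#(\sigma_n))<n\beta$ yields $\p(\#(\sigma_n)\ge n\beta/\eta)\le\eta$ for any $\eta\in(0,1)$. On the intersection of these two events, whose probability is at least $1-\eta-o(1)$, Corollary~\ref{col13} gives $X_\gamma(\sigma_n)\ge X_\gamma(T(\sigma_n))-\#(\sigma_n)\ge(2G(\gamma)-\varepsilon-\beta/\eta)n$.

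It then remains to tune the parameters. Given $\alpha<2$, I would fix $\gamma\in(\alpha,2)$ and then $\eta\in(0,1)$ small enough that $\gamma(1-\eta)>\alpha$; since $G(\gamma)>0$ for $\gamma<2$, choosing $\varepsilon:=G(\gamma)/2$ and $\beta:=\eta\,G(\gamma)/2$ makes $2G(\gamma)-\varepsilon-\beta/\eta=G(\gamma)>0$. Hence $X_\gamma(\sigma_n)>0$, and therefore $\ell(\sigma_n)>\gamma\sqrt{n}$, on an event of probability at least $1-\eta-o(1)$. Consequently $\e(\ell(\sigma_n))\ge\gamma\sqrt{n}\,\p(\ell(\sigma_n)>\gamma\sqrt{n})\ge\gamma(1-\eta-o(1))\sqrt{n}$, and taking $n_\alpha$ large enough that the $o(1)$ keeps $\gamma(1-\eta-o(1))\ge\alpha$ concludes the proof. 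All the estimates are uniform over conjugation invariant $\sigma_n$ with $\e(\#(\sigma_n))<n\beta$, because the only distributional input, Corollary~\ref{col11}, concerns the fixed law $Ew(0)$ of $T(\sigma_n)$.

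The main obstacle is exactly the one flagged at the outset: since the number of cycles lives on the scale $n$ rather than $\sqrt{n}$, one cannot hope to perturb $\ell$ itself; the whole point is to transfer the comparison with the $Ew(0)$ model to the macroscopic statistic $X_\gamma$, where a small mean number of cycles translates into a small relative perturbation, and only afterwards to read off a lower bound on $\ell$ from the positivity of $X_\gamma$. A secondary subtlety is the order of the choices: $\beta$ must be selected after $\eta$ and $\gamma$ (it is essentially their product $\eta\,G(\gamma)$), and one must ensure the probabilistic estimates are genuinely uniform in the unknown law of $\sigma_n$, which is why it is crucial that $T(\sigma_n)$ has the universal law $Ew(0)$.
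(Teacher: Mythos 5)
Your proof is correct and takes essentially the same route as the paper's own: both transfer the problem to the statistic $\sum_i(\lambda_i(\sigma)-\gamma\sqrt{n})_+$, use Corollary~\ref{col11} for $T(\sigma_n)\sim Ew(0)$ together with the perturbation bound of Corollary~\ref{col13}, control $\#(\sigma_n)$ by Markov's inequality, and conclude from $\{\ell(\sigma)>k\}=\{\sum_i(\lambda_i(\sigma)-k)_+>0\}$ that $\e(\ell(\sigma_n))\geq \gamma\sqrt{n}\,\p(\ell(\sigma_n)>\gamma\sqrt{n})\geq\alpha\sqrt{n}$. Your parameter bookkeeping (cutting $\#(\sigma_n)$ at $n\beta/\eta$ rather than at $G(\gamma)n$) and your explicit remark on uniformity in the law of $\sigma_n$ are only cosmetic variants of the paper's choices.
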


\begin{proof}
This is a direct application of Corollary \ref{col11} and Corollary \ref{col13}.
Let   $\alpha<\gamma<2$, $\varepsilon>0$ and  $\beta>0$  such that  $1-\frac{\beta}{G(\gamma)} - \varepsilon > \frac{\alpha}{\gamma}$. By Corollary \ref{col11}, we obtain  the existence of $n_\alpha$ such that for any $n>n_\alpha$,
\begin{equation*}
\p\left(\frac{\sum_{i=1}^{n}(\lambda_i(T(\sigma_n))-\gamma\sqrt{n})_{+}}{n}> G(\gamma) \right) > 1-\varepsilon.
\end{equation*}
Since  $\{\ell(\sigma)>k\}$ is equivalent to  $\{\sum_{i=1}^\infty (\lambda_i(\sigma)-k)_{+} >0\} $ and by Markov inequality, we obtain
\begin{align*}
    \e(\ell(\sigma_n))&\geq  \gamma\sqrt{n}  \p(\ell(\sigma_n)\geq\gamma\sqrt{n})
    \\& \geq  \gamma\sqrt{n} \p\left(\frac{\sum_{i=1}^{n}(\lambda_i(T(\sigma_n))-\gamma\sqrt{n})_{+}}{n}> G(\gamma) , \frac{\#(\sigma_n)}{n}<G(\gamma) \right)
    \\& \geq 
     \gamma\sqrt{n} \left(1-\frac{\beta}{G(\gamma)}-\varepsilon\right) \\&\geq \alpha\sqrt{n}.
\end{align*}
\end{proof}
\begin{lemma} \label{lem21}

Let $\sigma \in \s$ and $\rho \in A_\sigma$, then
\begin{align*}
    \ell(\sigma)\geq \sup\left\{k\in \mathbb{N},\sum_{i=1}^\infty (\lambda_i(\rho)-k)_{+} \geq \#(\sigma)\right\}
\end{align*}
and
\begin{align*}
    \ell(\rho)\geq \sup\left\{k\in \mathbb{N},\sum_{i=1}^\infty (\lambda_i(\sigma)-k)_{+} \geq \#(\sigma)\right\}.
\end{align*}
\end{lemma}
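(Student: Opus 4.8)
The plan is to prove both inequalities at once, since they are mirror images of each other under exchanging $\sigma$ and $\rho$: in each case we want to show that $\lambda_1$ of one permutation dominates the threshold cut out by the tail sums $\sum_i(\lambda_i(\cdot)-k)_+$ of the other. I would treat the second inequality, $\ell(\rho)\geq K$ with $K:=\sup\{k\in\mathbb{N}:\ \sum_{i=1}^\infty(\lambda_i(\sigma)-k)_+\geq \#(\sigma)\}$, and then obtain the first one verbatim after swapping the roles of $\sigma$ and $\rho$. The key observation that makes this swap legitimate is that the bound \eqref{eq14} of Lemma~\ref{basiclem} is symmetric in $\sigma$ and $\rho$ and holds for \emph{every} deterministic $\rho\in A_\sigma$ (the proof of Lemma~\ref{basiclem} only uses that $\rho=\sigma\circ(j_1,j_2)\circ\cdots\circ(j_1,j_{\#(\sigma)})$ is a single cycle), not merely for the random image $T(\sigma)$.

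First I would unpack the definition of $K$. Since $k\mapsto\sum_i(\lambda_i(\sigma)-k)_+$ is non-increasing, integer-valued, and equals $n\geq\#(\sigma)$ at $k=0$, the supremum is attained and $\sum_i(\lambda_i(\sigma)-K)_+\geq\#(\sigma)$. Setting $p:=\#\{i:\lambda_i(\sigma)>K\}$, positivity of $\#(\sigma)$ forces $p\geq1$ (there must be a part exceeding $K$), and by definition of $p$,
\begin{equation*}
\sum_{i=1}^p\lambda_i(\sigma)-pK=\sum_{i=1}^\infty(\lambda_i(\sigma)-K)_+\geq\#(\sigma),
\end{equation*}
so that $\sum_{i=1}^p\lambda_i(\sigma)\geq pK+\#(\sigma)$.

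The key step is then to transfer this partial-sum estimate from $\sigma$ to $\rho$. Here I would invoke \eqref{eq14} (itself deduced from Greene's theorem, Lemma~\ref{RSKLEMMA}, by deleting the at most $\#(\sigma)$ merge points), which gives $\bigl|\sum_{k=1}^p(\lambda_k(\sigma)-\lambda_k(\rho))\bigr|\leq\#(\sigma)$; hence $\sum_{k=1}^p\lambda_k(\rho)\geq\sum_{k=1}^p\lambda_k(\sigma)-\#(\sigma)\geq pK$. Because the parts are ordered, the average of the first $p$ is at most the largest, so $\ell(\rho)=\lambda_1(\rho)\geq\frac1p\sum_{k=1}^p\lambda_k(\rho)\geq K$, which is exactly the claim. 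Repeating the same three steps with $\sigma$ and $\rho$ interchanged yields $\ell(\sigma)\geq K'$, where $K'$ is the supremum associated with the tail sums of $\lambda(\rho)$, giving the first inequality.

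I expect the delicate points to be bookkeeping rather than substance: confirming that the supremum defining $K$ is attained and that $p\geq1$ (so the averaging step $\lambda_1\geq\frac1p\sum_{k\le p}\lambda_k$ is valid), and—should one prefer to argue from scratch instead of quoting \eqref{eq14}—checking that deleting from an element of $\mathfrak{I}_p(\rho)$ the at most $\#(\sigma)$ indices on which $\sigma$ and $\rho$ disagree leaves an element of $\mathfrak{I}_p(\sigma)$ of size at least $\sum_{k=1}^p\lambda_k(\rho)-\#(\sigma)$. Neither is a genuine obstacle, so I anticipate a short proof.
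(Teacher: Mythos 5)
Your proof is correct and takes essentially the same route as the paper: the paper deduces Lemma~\ref{lem21} from Corollary~\ref{col13} (the tail sums $\sum_i(\lambda_i(\cdot)-k)_+$ of $\sigma$ and of its image under $T$ differ by at most $\#(\sigma)$, itself a consequence of \eqref{eq14}) together with the equivalence $\{\ell(\sigma)>k\}\Leftrightarrow\{\sum_i(\lambda_i(\sigma)-k)_+>0\}$, whereas you inline that same transfer directly from \eqref{eq14} and close with the averaging step $\lambda_1\geq\frac{1}{p}\sum_{k=1}^p\lambda_k$. Your observation that \eqref{eq14} holds deterministically for every $\rho\in A_\sigma$, not just for the random image $T(\sigma)$, is exactly the justification the paper's ``direct application'' implicitly relies on, so the two arguments coincide in substance.
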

\begin{proof}
By the equivalence between  $\{\ell(\sigma)>k\}$ and $\{\sum_{i=1}^\infty (\lambda_i(\sigma)-k)_{+} >0 \}$,
this a direct application of Corollary~\ref{col13}.

\end{proof}

\subsection{Proof of Proposition~\ref{thm7} and Corollary~\ref{thmp}} \label{pthm7}
\paragraph{}
To prove  Proposition~\ref{thm7}  and Corollary~\ref{thmp}, we distinguish two cases. For the first case, we suppose that the number of fixed points is large enough.  We use the fact that for a given permutation, the length of the longest increasing subsequence is bigger than the number of fixed points. For the second case, when the number of fixed points is controlled,  we prove in Lemma \ref{lem15} that the number of cycles of $(\sigma_{1,n})^{-1}\circ{\sigma_{2,n}}$ is sufficiently controlled  to use Corollary~\ref{col14}. In both cases, we can conclude
by Proposition~\ref{prop}. 
\begin{lemma}\label{lem15}
For any $ k\geq 2$,  there exists $C, C'>0$ such that for any $n\geq 1$, for any  independent random permutations $\sigma_{1,n}$ and $\sigma_{2,n}$  with distributions invariant under conjugation,
\begin{align*}
    \p\left(c_1\left((\sigma_{1,n})^{-1}\circ{\sigma_{2,n}}\right)=k\right) \leq  \frac{C}{n}+ C' (\p(\sigma_{1,n}(1)=1)+\p(\sigma_{2,n}(1)=1)),
\end{align*}
where $c_m(\sigma)$ is the length of the cycle of $\sigma$ containing $m$.
\end{lemma}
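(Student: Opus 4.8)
The plan is to expand $\p(c_1(\tau)=k)$ for $\tau:=\sigma_{1,n}^{-1}\circ\sigma_{2,n}$ as a sum over the possible cycles of $\tau$ through the point $1$, and to estimate each resulting term using \emph{only} the conjugation invariance of $\sigma_{1,n}$ and $\sigma_{2,n}$. Writing a putative cycle as $1=x_0\to x_1\to\cdots\to x_{k-1}\to x_0$, the relation $\tau(x_j)=x_{j+1}$ reads $\sigma_{2,n}(x_j)=\sigma_{1,n}(x_{j+1})$; introducing the common values $y_{j+1}:=\sigma_{1,n}(x_{j+1})=\sigma_{2,n}(x_j)$ and using independence of the two permutations, I would obtain
\[
\p(c_1(\tau)=k)=\sum_{\substack{x_1,\dots,x_{k-1}\\ \text{distinct},\ \neq 1}}\ \sum_{\substack{y_1,\dots,y_k\\ \text{distinct}}}\p(\sigma_{1,n}\supseteq\pi_1)\,\p(\sigma_{2,n}\supseteq\pi_2),
\]
where $\sigma\supseteq\pi$ means that $\sigma$ extends the partial permutation $\pi$, and $\pi_1$ is $x_j\mapsto y_j$ $(1\le j\le k-1)$, $1\mapsto y_k$, while $\pi_2$ is $1\mapsto y_1$, $x_j\mapsto y_{j+1}$. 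The $y$'s are automatically distinct since $\sigma_{1,n}$ is a bijection.

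The key estimate is a uniform correlation bound for conjugation-invariant permutations. For such a $\sigma$, the quantity $\p(\sigma\supseteq\pi)$ depends only on the isomorphism type (the \emph{shape}) of $\pi$, namely its decomposition into directed paths, cycles of length $\ge2$, and fixed points, because two partial permutations of the same shape are conjugate. Averaging over all partial permutations of a fixed shape turns this probability into a first moment,
\[
\p(\sigma\supseteq\pi)=\frac{\e\big[\,\#\{\text{copies of the shape of }\pi\text{ in }\sigma\}\,\big]}{\#\{\text{partial permutations of that shape in }\inin\}},
\]
and both quantities are elementary to count: placements are $\asymp n^{\,\#\text{vertices}}$, each path or cycle admits $\le n$ copies, and $f$ fixed points contribute a factor $(m_1)_f\le n^{f-1}m_1$ with $\e[m_1]=n\,p_\sigma$, writing $p_\sigma:=\p(\sigma(1)=1)$. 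For a shape with $r$ paths, $c$ cycles of length $\ge2$ and $f$ fixed points this yields
\[
\p(\sigma\supseteq\pi)\le C_k\,p_\sigma^{\,\mathbf{1}_{\{f\ge 1\}}}\,n^{\,c+f-k}.
\]
This simultaneously converts forced fixed points into factors $p_\sigma$ and provides the sharp generic rate $n^{-k}$ (the case $c=f=0$), which is exactly what is needed to make the generic term of the expansion of order $1/n$.

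With this in hand I would group the terms of the expansion according to the coincidence pattern of $S:=\{1,x_1,\dots,x_{k-1}\}$ and $T:=\{y_1,\dots,y_k\}$; there are $O_k(1)$ such patterns, and within a pattern the number of configurations is $\asymp n^{V-1}$, where $V=|S\cup T|=k+r$ and $r$ is the common number of path components of $\pi_1$ and $\pi_2$ (both are bijections $S\to T$ on the vertex set $S\cup T$, so each has $V-k=r$ paths). Combining this count with the correlation bound, a pattern contributes $\asymp p_1^{\mathbf{1}_{\{f_1\ge1\}}}p_2^{\mathbf{1}_{\{f_2\ge1\}}}\,n^{\,r+c_1+f_1+c_2+f_2-k-1}$, where $c_i,f_i$ are the cycle and fixed-point counts of $\pi_i$ and $p_i:=\p(\sigma_{i,n}(1)=1)$. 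Two inequalities close the argument. First, when $f_1=f_2=0$ all cycles have length $\ge2$, so $c_i\le (k-r)/2$ and hence $r+c_1+c_2\le k$: the exponent is $\le-1$ and such patterns are $O(1/n)$. Second, in general the union $\pi_1\cup\pi_2$ is the image of a single alternating $2k$-cycle $1\to y_1\leftarrow x_1\to y_2\leftarrow\cdots$, hence connected, while $\pi_1$ and $\pi_2$ share no edge; supermodularity of the number of connected components then forces $r+c_1+f_1+c_2+f_2\le k+1$, so the exponent is always $\le0$ and every pattern with some $f_i\ge1$ contributes $O(p_1+p_2)$. Summing the finitely many patterns gives the bound with $C,C'$ depending on $k$, the extremal patterns (one permutation fixing all of $S$ while the other realises a single $k$-cycle on $S$) accounting for the $p_1$ and $p_2$ terms.

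The main obstacle is twofold. The first delicate point is the uniform correlation bound: it must hold for \emph{every} conjugation-invariant law, and the shape-averaging identity is the device that makes it tractable by reducing it to counting path/cycle copies. The second, and I expect the harder, point is the joint bookkeeping of the last step — specifically the connectivity/supermodularity inequality $r+c_1+f_1+c_2+f_2\le k+1$. This is precisely what prevents the naive, independent bounds on the shapes of $\pi_1$ and $\pi_2$ from producing spurious contributions of order $1$, and it is what isolates the two extremal configurations that genuinely produce the $p_1$ and $p_2$ terms in the statement.
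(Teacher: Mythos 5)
Your argument is correct, and its skeleton coincides with the paper's own proof: your partial permutations $\pi_1,\pi_2$ are exactly the paper's graphs $\mathcal{G}^1_1,\mathcal{G}^1_2$ (edge sets $\{(x_{j+1},y_j)\}$ and $\{(x_j,y_j)\}$), your finitely many coincidence patterns are its equivalence classes in $\hat{\mathbb{G}}_k$, and your shape-averaging correlation bound plays the role of its counting corollary $\p(\sigma_n\in\mathfrak{S}_{n,g})\leq \frac{1}{\binom{n-p}{v-p}(v-p)!}$, which the paper proves by producing $\asymp n^{v-p}$ disjoint relabelled copies of $g$; in the loop-free case your inequality $c_i\leq (k-r)/2$ is literally the paper's $p_i\leq v/2$ (with $p_i=r+c_i$, $v=k+r$), and both give exponent $\leq -1$. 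The genuine divergence is the treatment of fixed points. The paper never multiplies the two one-graph bounds in that case, so it never needs your connectivity inequality: when $\hat g_1$ contains a loop it simply discards $\pi_2$ and exploits exchangeability of the distinguished point, writing $\p(\hat{\mathcal{G}}^1_1=\hat g_1)=\frac{1}{n}\sum_{i=1}^n\p(\hat{\mathcal{G}}^i_1=\hat g_1)\leq k\,\e(card(fix(\sigma_{1,n})))/n=k\,\p(\sigma_{1,n}(1)=1)$, using its Lemma~\ref{lem20} to see that each such cycle consumes a fixed point. Your route instead folds the fixed-point probability into the correlation bound via $\e[(m_1)_f]\leq n^{f-1}\e[m_1]=n^f p_\sigma$ and closes with $p_1+p_2\leq V+1$; that inequality is indeed valid, most cleanly via subadditivity of graphic-matroid rank, $V-1=\mathrm{rank}(\pi_1\cup\pi_2)\leq (V-p_1)+(V-p_2)$, where connectivity of the union follows from your alternating $2k$-cycle — note that edge-disjointness of $\pi_1$ and $\pi_2$, which you invoke, is actually not needed for this step. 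What each approach buys: the paper's exchangeability trick is shorter and eliminates all two-graph bookkeeping in the loop case, at the price of bounding the joint event by a single marginal; yours is sharper per pattern (it retains the product $p_1p_2$ when both shapes carry fixed points) and makes the dichotomy $O(1/n)$ versus $O(\p(\sigma_{1,n}(1)=1)+\p(\sigma_{2,n}(1)=1))$ transparent through a single exponent formula $r+c_1+f_1+c_2+f_2-k-1$.
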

To prove this result, we will introduce some new objects. To a couple of permutations, we will associate a couple of graphs. 
\\ We denote by $\mathbb{G}^n_k$ the set of oriented simple graphs  with vertices $\{1,2,\dots,n\}$ and having exactly $k$ edges.
\\ For example, $\mathbb{G}^2_1= \left\{\begin {tikzpicture}[-latex ,auto ,node distance =1 cm and 1.5cm ,on grid ,
semithick ,
state/.style ={ circle ,top color =white , bottom color = processblue!20 ,
draw,processblue , text=blue , minimum width =0.1 cm}]
\node[state] (C) {$1$};
\node[state] (D) [right =of C] {$2$};
\path (C) edge [bend left =25]  (D); 
\end{tikzpicture}, \begin {tikzpicture}[-latex ,auto ,node distance =1 cm and 1.5cm ,on grid ,
semithick ,
state/.style ={ circle ,top color =white , bottom color = processblue!20 ,
draw,processblue , text=blue , minimum width =0.1 cm}]
\node[state] (C) {$2$};
\node[state] (D) [right =of C] {$1$};
\path (C) edge [bend left =25]  (D); 
\end{tikzpicture},  
\begin {tikzpicture}[-latex ,auto ,node distance =1 cm and 1.5cm ,on grid ,
semithick ,
state/.style ={ circle ,top color =white , bottom color = processblue!20 ,
draw,processblue , text=blue , minimum width =0.1 cm}]
\node[state] (C) {$1$};
\node[state] (D) [below =of C] {$2$};
\path  (C) edge [loop above]  (C); 
\end{tikzpicture},
\begin {tikzpicture}[-latex ,auto ,node distance =1 cm and 1.5cm ,on grid ,
semithick ,
state/.style ={ circle ,top color =white , bottom color = processblue!20 ,
draw,processblue , text=blue , minimum width =0.1 cm}]
\node[state] (C) {$2$};
\node[state] (D) [below =of C] {$1$};
\path  (C) edge [loop above]  (C); 
\end{tikzpicture}
\right\}$ .
\\ Given  $g\in\mathbb{G}^n_k$, we denote by $E_g$ the set of its edges and by $A_g:=[\mathbbm{1}_{(i,j)\in E_g}]_{1\leq i,j\leq n}$ its adjacency matrix. A connected component of $g$ is called \textit{trivial} if it does not have any edge and a vertex $i$ of $g$ is called \textit{isolated} if $E_g$ does not contain any edge of the form $(i,j)$ or $(j,i)$.  
We say that  two  oriented simple graphs $g_1$ and $g_2$ are \textit{isomorphic} if  one can obtain  $g_2$ by changing the labels of the vertices of $g_1$. In particular, if  $g_1,g_2\in\mathbb{G}^n_k$  then $g_1,g_2$ are isomorphic if and  only if   there exists a permutation matrix $\sigma$ such that  $A_{g_1}\sigma=\sigma A_{g_2}$. 
Let $g\in\mathbb{G}^n_k $, we denote by  $\tilde{g}$  the graph obtained  from  $g$  after removing isolated vertices.
Let $\mathcal{R}$ be the equivalence relation  such that $g_1\mathcal{R} g_2$ if  $\tilde{g}_1$ and $\tilde{g}_2$ are isomorphic.
 We denote by  $\hat{\mathbb{G}}_k:=\bigslant{\cup_{n\geq1} \mathbb{G}_k^n}{\mathcal{R}}
$   the set of  equivalence classes of $\cup_{n\geq1} \mathbb{G}_k^n$  for the relation $\mathcal{R}$.
\\
For example,  \ $\begin {tikzpicture}[-latex ,auto ,node distance =1 cm and 1.5cm ,on grid ,
semithick ,
state/.style ={ circle ,top color =white , bottom color = processblue!20 ,
draw,processblue , text=blue , minimum width =0.1 cm}]
\node[state] (C) {$2$};
\node[state] (D) [below =of C] {$1$};
\path  (C) edge [loop above]  (C); 
\end{tikzpicture} \ 
\mathcal{R}
\ 
\begin {tikzpicture}[-latex ,auto ,node distance =1 cm and 1.5cm ,on grid ,
semithick ,
state/.style ={ circle ,top color =white , bottom color = processblue!20 ,
draw,processblue , text=blue , minimum width =0.1 cm}]
\node[state] (C) {$1$};
\path  (C) edge [loop above]  (C); 
\end{tikzpicture}$ and $\hat{\mathbb{G}}_1 = \left\{\begin {tikzpicture}[-latex ,auto ,node distance =1 cm and 1.5cm ,on grid ,
semithick ,
state/.style ={ circle ,top color =white , bottom color = processblue!20 ,
draw,processblue , text=blue , minimum width =0.1 cm}]
\node[state] (C) {$\ $};
\node[state] (D) [right =of C] {$\ $};
\path (C) edge [bend left =25]  (D); 
\end{tikzpicture},  
\begin {tikzpicture}[-latex ,auto ,node distance =1 cm and 1.5cm ,on grid ,
semithick ,
state/.style ={ circle ,top color =white , bottom color = processblue!20 ,
draw,processblue , text=blue , minimum width =0.1 cm}]
\node[state] (C) {$\ $};
\path  (C) edge [loop above]  (C); 
\end{tikzpicture} \right\}$.
\\
Let $n$ be a positive integer and $\sigma_1,\sigma_2\in\s$.
Let $k_m:=c_m(\sigma_{1}^{-1}\circ\sigma_{2})$, $(i^m_1=m,i^m_2,\dots,i^m_{k_m})$ be the cycle  of $\sigma_{1}^{-1}\circ\sigma_{2}$ containing $m$  and $j_l^m:=\sigma_{2}(i^m_l)$. 
In particular, $i_1^m,i_2^m,\dots,i_{k_m}^m$ are pairwise distinct and  $j_1^m,j_2^m,\dots,j_{k_m}^m$  are pairwise distinct. 
We denote by   $\mathcal{G}_1^m (\sigma_1,\sigma_2) \in \mathbb{G}_{k_m}^n$  the  graph such that $E_{\mathcal{G}_1^m (\sigma_1,\sigma_2)}=\{(i^m_1,j^m_{k_m})\} \bigcup \left(\bigcup_{l=1}^{k_m-1}{\{(i^m_{l+1},j^m_l)\}}\right)   $. We denote also by 
$\mathcal{G}_2^m(\sigma_1,\sigma_2)\in \mathbb{G}_{k_m}^n$ the  graph such that $E_{\mathcal{G}_2^m (\sigma_1,\sigma_2)}=\cup_{l=1}^{k_m}{\{(i^m_l,j^m_{l})\}}$. 
\emph{In particular, $\mathcal{G}_1^m(\sigma_1,\sigma_2)$ and $\mathcal{G}_2^m(\sigma_1,\sigma_2)$ have the same set of non-isolated vertices}.
For $i\in\{1,2\}$, let 
$\hat{\mathcal{G}}^m_i(\sigma_1,\sigma_2)$ be the  equivalence class of $\mathcal{G}^m_i(\sigma_1,\sigma_2)$. \\
For example, if $$\sigma_{1}=\begin{pmatrix}
 1& 2 & 3 & 4 & 5 \\ 
 5& 3 & 2 & 1 & 4 
\end{pmatrix} \quad \text{and} \quad \sigma_{2}=\begin{pmatrix}
 1& 2 & 3 & 4 & 5 \\ 
 2& 3 & 5 & 1 & 4 
\end{pmatrix},$$ we obtain $E_{\mathcal{G}_1^1 (\sigma_1,\sigma_2)}=\{(1,5),(3,2)\}$, $E_{\mathcal{G}_2^1 (\sigma_1,\sigma_2)}=\{(1,2),(3,5)\}$, 

\begin {tikzpicture}[-latex ,auto ,node distance =1 cm and 1.5cm ,on grid ,
semithick ,
state/.style ={ circle ,top color =white , bottom color = processblue!20 ,
draw,processblue , text=blue , minimum width =0.1 cm}]
\node[state] (C) {$3$};
\node[state] (A) [above right=of C] {$5$};
\node[state] (B) [above =of C] {$1$};
\node[state] (E) [below =of C] {$4$};
\node[state] (D) [right =of C] {$2$};
\path (B) edge [bend left =25]  (A);
\path (C) edge [bend left =25]  (D);
\node[] at (-1.5,0) {$\mathcal{G}^1_1(\sigma_1,\sigma_2)=$};
\end{tikzpicture}
\begin {tikzpicture}[-latex ,auto ,node distance =1 cm and 1.5cm ,on grid ,
semithick ,
state/.style ={ circle ,top color =white , bottom color = processblue!20 ,
draw,processblue , text=blue , minimum width =0.1 cm}]
\node[state] (C) {$3$};
\node[state] (A) [above right=of C] {$2$};
\node[state] (B) [above =of C] {$1$};
\node[state] (E) [below =of C] {$4$};
\node[state] (D) [right =of C] {$5$};
\path (B) edge [bend left =25]  (A);
\path (C) edge [bend left =25]  (D);
\node[] at (-1.7,0) {, \ \ $\mathcal{G}^1_2(\sigma_1,\sigma_2)=$};
\node[] at (+2.2,0) { \ \ and};
\end{tikzpicture}

\begin {tikzpicture}[-latex ,auto ,node distance =1 cm and 1.5cm ,on grid ,
semithick ,
state/.style ={ circle ,top color =white , bottom color = processblue!20 ,
draw,processblue , text=blue , minimum width =0.6 cm}]
\node[state] (C) {};
\node[state] (A) [above right=of C] {};
\node[state] (B) [above =of C] {};
\node[state] (D) [right =of C] {};
\path (B) edge [bend left =25]  (A);
\path (C) edge [bend left =25]  (D);
\node[] at (-2.3,0.5) {$\hat{\mathcal{G}}^1_1(\sigma_1,\sigma_2)=\hat{\mathcal{G}}^1_2(\sigma_1,\sigma_2)=$};\node[] at (2,0.5) {.};
\end{tikzpicture}
\\ Finally, given  $g\in\mathbb{G}^n_k$, we denote by 
$$\mathfrak{S}_{n,g}:=\{\sigma\in \s; \forall (i,j)\in E_g, \sigma(i)=j \}.$$ 
It is not difficult to prove the two following lemmas.
\begin{lemma} \label{lem20}
If  ${m_1}\in \{i^{m_2}_l, 1\leq l\leq k_{m_2}\}$, then $\mathcal{G}^{m_1}_1(\sigma_1,\sigma_2)=\mathcal{G}^{m_2}_1(\sigma_1,\sigma_2)$ and $\mathcal{G}^{m_1}_2(\sigma_1,\sigma_2)=\mathcal{G}^{m_2}_2(\sigma_1,\sigma_2)$.
\end{lemma}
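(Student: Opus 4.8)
The plan is to recognize that the hypothesis $m_1\in\{i^{m_2}_l,\,1\leq l\leq k_{m_2}\}$ says exactly that $m_1$ and $m_2$ lie in the \emph{same} cycle of $\sigma_1^{-1}\circ\sigma_2$, so that changing the base point from $m_2$ to $m_1$ merely cyclically rotates the enumeration of that cycle. Since the edge sets defining $\mathcal{G}^m_1(\sigma_1,\sigma_2)$ and $\mathcal{G}^m_2(\sigma_1,\sigma_2)$ will be seen to be invariant under such a rotation, the graphs themselves coincide.

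First I would record that $k_{m_1}=k_{m_2}=:k$, both being the length of the common cycle, and write $m_1=i^{m_2}_p$ for a unique $1\leq p\leq k$. Because $i^m_{l+1}=(\sigma_1^{-1}\circ\sigma_2)(i^m_l)$ with the cyclic convention $i^m_{k+1}:=i^m_1$, iterating $\sigma_1^{-1}\circ\sigma_2$ from the base point $m_1=i^{m_2}_p$ gives $i^{m_1}_l=i^{m_2}_{p+l-1}$ for every $l$, where the lower index is read modulo $k$ as an element of $\{1,\dots,k\}$. Applying $\sigma_2$ and using $j^m_l=\sigma_2(i^m_l)$ then yields $j^{m_1}_l=j^{m_2}_{p+l-1}$ under the same convention.

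Next I would put both edge sets in a manifestly cyclically symmetric form (suppressing the fixed arguments $\sigma_1,\sigma_2$). For $\mathcal{G}^m_2$ this is immediate: $E_{\mathcal{G}^m_2}=\{(i^m_l,j^m_l):1\leq l\leq k\}$. For $\mathcal{G}^m_1$ the distinguished edge $(i^m_1,j^m_{k})$ is precisely the term $l=k$ of $(i^m_{l+1},j^m_l)$ under the convention $i^m_{k+1}=i^m_1$, so $E_{\mathcal{G}^m_1}=\{(i^m_{l+1},j^m_l):1\leq l\leq k\}$. Substituting the rotation $i^{m_1}_l=i^{m_2}_{p+l-1}$, $j^{m_1}_l=j^{m_2}_{p+l-1}$ and reindexing by $l'\equiv p+l-1\ (\mathrm{mod}\ k)$ — a bijection of $\{1,\dots,k\}$ — turns $E_{\mathcal{G}^{m_1}_2}$ into $\{(i^{m_2}_{l'},j^{m_2}_{l'}):1\leq l'\leq k\}=E_{\mathcal{G}^{m_2}_2}$ and $E_{\mathcal{G}^{m_1}_1}$ into $\{(i^{m_2}_{l'+1},j^{m_2}_{l'}):1\leq l'\leq k\}=E_{\mathcal{G}^{m_2}_1}$. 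Since all four graphs live in $\mathbb{G}^n_k$ over the same labeled vertex set $\{1,\dots,n\}$, equality of edge sets is equality of adjacency matrices, hence of graphs, which is the assertion.

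The argument is pure bookkeeping and I expect no genuine analytic obstacle; the only point demanding care is the cyclic index arithmetic — in particular, absorbing the distinguished edge $(i^m_1,j^m_{k})$ of $\mathcal{G}^m_1$ into the generic pattern $(i^m_{l+1},j^m_l)$ via the convention $i^m_{k+1}=i^m_1$, so that the rotation invariance of both edge sets becomes transparent.
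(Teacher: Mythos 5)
Your proposal is correct and takes essentially the same route as the paper: both arguments observe that moving the base point from $m_2$ to $m_1 = i^{m_2}_p$ merely rotates the sequences $(i^m_l)_l$ and $(j^m_l)_l$ cyclically (with $k_{m_1}=k_{m_2}$), and that the edge sets of $\mathcal{G}^m_1$ and $\mathcal{G}^m_2$ are invariant under this rotation. The only difference is one of detail: where the paper writes the rotated sequences and then says ``we can check easily,'' you spell out that check, in particular absorbing the distinguished edge $(i^m_1,j^m_{k_m})$ into the generic pattern $(i^m_{l+1},j^m_l)$ via the convention $i^m_{k_m+1}=i^m_1$ and reindexing modulo $k_m$.
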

\begin{proof}
If ${m_1}\in \{i^{m_2}_l, 1\leq l\leq k_{m_2}\}$, then there exists $1\leq l \leq k_{m_1} $ such that $(\sigma^{-1}_{1}\circ\sigma_{2})^l(m_1)=m_2$. Consequently, $k_{m_1}=k_{m_2}$,  $
(i^{m_2}_1,i^{m_2}_2,\dots, i^{m_2}_{k_{m_2}})=(i^{m_1}_l,i^{m_1}_{l+1},\dots, i^{m_1}_{k_{m_1}},i^{m_1}_{{1}},\dots,i^{m_1}_{{l-1}})$ and  $
(j^{m_2}_1,j^{m_2}_2,\dots, j^{m_2}_{k_{m_2}})=(j^{m_1}_l,j^{m_1}_{l+1},\dots, j^{m_1}_{k_{m_1}},j^{m_1}_{{1}},\dots,j^{m_1}_{{l-1}})$ and  we can check easily that  $\mathcal{G}^{m_1}_1(\sigma_1,\sigma_2)=\mathcal{G}^{m_2}_1(\sigma_1,\sigma_2)$ and $\mathcal{G}^{m_1}_2(\sigma_1,\sigma_2)=\mathcal{G}^{m_2}_2(\sigma_1,\sigma_2)$.
\end{proof}
\begin{lemma}\label{lemma15}
Let $g_1, g_2 \in \mathbb{G}^n_k$. Assume that there exists  $\rho \in \s$ 
such that $A_{g_2}\rho=\rho A_{g_1}$. If $\rho$ has  a fixed point on  any non-trivial connected component  of $g_1$, then $\mathfrak{S}_{n,g_1}\cap\mathfrak{S}_{n,g_2}=\emptyset $ or $A_{g_1}=A_{g_2}$.
\end{lemma}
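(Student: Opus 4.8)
The plan is to prove the dichotomy directly by assuming the first alternative fails: suppose $\mathfrak{S}_{n,g_1}\cap\mathfrak{S}_{n,g_2}\neq\emptyset$ and fix a common element $\sigma$, so that $\sigma(i)=j$ for every $(i,j)\in E_{g_1}\cup E_{g_2}$. I would then show $A_{g_1}=A_{g_2}$. The first step is to unwind the matrix identity $A_{g_2}\rho=\rho A_{g_1}$ into its combinatorial meaning, namely that $\rho$ is a graph isomorphism from $g_1$ onto $g_2$: $(i,j)\in E_{g_1}$ if and only if $(\rho(i),\rho(j))\in E_{g_2}$. (Here I would be careful to fix the permutation-matrix convention and to check the direction of the isomorphism against the criterion stated just before Lemma~\ref{lem20}.) I would also note that the mere existence of $\sigma$ forces each vertex of $g_1$ and of $g_2$ to carry at most one outgoing and one incoming edge, so the connected components of $g_1$ are paths and cycles; this is reassuring, but the argument will use only connectedness.

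The heart of the proof is a propagation argument showing that a single $\rho$-fixed vertex pins down an entire component. Let $C$ be a non-trivial connected component of $g_1$; by hypothesis it contains a vertex $v$ with $\rho(v)=v$. I claim that whenever $u$ is a $\rho$-fixed vertex of $C$, every edge of $g_1$ incident to $u$ is also an edge of $g_2$ and its other endpoint is again $\rho$-fixed. For an outgoing edge $(u,w)\in E_{g_1}$ we have $\sigma(u)=w$, while the isomorphism gives $(u,\rho(w))\in E_{g_2}$ and hence $\sigma(u)=\rho(w)$; comparing yields $\rho(w)=w$, whence $(u,w)\in E_{g_2}$. For an incoming edge $(w,u)\in E_{g_1}$ we have $\sigma(w)=u$, the isomorphism gives $(\rho(w),u)\in E_{g_2}$ and hence $\sigma(\rho(w))=u=\sigma(w)$; now the injectivity of $\sigma$ forces $\rho(w)=w$ and thus $(w,u)\in E_{g_2}$. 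Running this along a breadth-first exploration of $C$ from $v$, connectedness guarantees that every vertex of $C$ is $\rho$-fixed and, more importantly, that every edge of $g_1$ lying in $C$ also belongs to $g_2$.

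To conclude, I would observe that every edge of $g_1$ lies in some non-trivial component (a trivial component has no edge by definition), and each non-trivial component contains a $\rho$-fixed vertex by hypothesis; the previous step therefore yields $E_{g_1}\subseteq E_{g_2}$. Since $g_1,g_2\in\mathbb{G}^n_k$ both have exactly $k$ edges, this inclusion is an equality, i.e.\ $A_{g_1}=A_{g_2}$, which is the second alternative.

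I expect the main obstacle to be the propagation step, and in particular its backward (incoming-edge) direction, where the injectivity of $\sigma$ is precisely what converts the isomorphic edge $(\rho(w),u)$ of $g_2$ back into the equality $\rho(w)=w$. Making this run cleanly requires pinning down the exact orientation convention hidden in $A_{g_2}\rho=\rho A_{g_1}$, so that the isomorphism is recorded with the correct direction; with that convention fixed, the forward and backward steps become symmetric and the remainder is routine bookkeeping. I would also confirm the intended reading of the hypothesis, namely that $\rho$ fixes a point on \emph{every} non-trivial connected component of $g_1$, which is what makes the final inclusion $E_{g_1}\subseteq E_{g_2}$ global.
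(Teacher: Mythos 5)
Your proof is correct and rests on exactly the same two computations as the paper's: when an edge of $g_1$ has precisely one $\rho$-fixed endpoint, comparing $\sigma(i)=j$ (forced by $\sigma\in\mathfrak{S}_{n,g_1}$) with the $\rho$-translated edge in $E_{g_2}$ yields a contradiction, injectivity of $\sigma$ handling the incoming-edge direction. The only difference is organizational: the paper argues the contrapositive, asserting without justification that $A_{g_1}\neq A_{g_2}$ ``necessarily'' produces an edge with exactly one $\rho$-fixed endpoint, and your propagation-plus-connectedness argument (fixed points spread along each non-trivial component, giving $E_{g_1}\subseteq E_{g_2}$ and then equality by the edge count $k$) is precisely the justification of that asserted step, so your write-up is if anything slightly more complete.
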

\begin{proof}
Let $\rho \in \s$  be a permutation  having a fixed point on  any non-trivial connected component  of $g_1$ such that $A_{g_2}\rho=\rho A_{g_1}$. Assume that $A_{g_1} \neq A_{g_2}$.  There exists necessarily $(i,j)\in E_{g_1}$ 
such that $\rho(i)=i$  and $\rho(j)\neq j$ or $\rho(j)=j$  and $\rho(i)\neq i$ . In the first case, 
$\mathfrak{S}_{n,g_1}\cap\mathfrak{S}_{n,g_2}\subset\{\sigma\in \s; \sigma(i)=j, \sigma(i)=\rho(j) \}=\emptyset$. 
In the second case, 
$\mathfrak{S}_{n,g_1}\cap\mathfrak{S}_{n,g_2}\subset\{\sigma\in \s; \sigma(i)=j, \sigma(\rho(i))=j \}=\emptyset$.
\end{proof}
The following result is immediate. 
\begin{corollary}
For any graph $g\in \mathbb{G}^n_k$ having $p$ non-trivial connected components and $v$ non-isolated vertices, for any random permutation $\sigma_n$ with distribution invariant under conjugation on $\s$,
\begin{align*}
    \p(\sigma_n\in\mathfrak{S}_{n,g}) \leq \frac{1}{{{n-p}\choose{v-p}}(v-p)!}.
\end{align*}
\end{corollary}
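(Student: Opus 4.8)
The plan is to combine the conjugation invariance of $\sigma_n$ with a disjointness argument for the constraint sets $\mathfrak{S}_{n,g'}$ over a well-chosen family of isomorphic copies $g'$ of $g$. First I would dispose of the trivial case: if $\mathfrak{S}_{n,g}=\emptyset$ the left-hand side is $0$ and there is nothing to prove, so I may assume $\mathfrak{S}_{n,g}\neq\emptyset$. Nonemptiness forces the edges of $g$ to define a partial injection, i.e. no vertex is the source of two edges nor the target of two edges (otherwise the constraints $\sigma(i)=j$ are contradictory). Consequently every non-trivial connected component of $\tilde g$ is a directed path or a directed cycle, a rigidity fact I will exploit below.

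Next I would use conjugation invariance to equalize probabilities across the isomorphism class of $g$. For $\rho\in\s$, writing $g^{\rho}$ for the graph with edge set $\{(\rho^{-1}(i),\rho^{-1}(j)):(i,j)\in E_g\}$, one checks directly that $\rho\circ\sigma_n\circ\rho^{-1}\in\mathfrak{S}_{n,g}$ if and only if $\sigma_n\in\mathfrak{S}_{n,g^{\rho}}$. Invariance then gives $\p(\sigma_n\in\mathfrak{S}_{n,g})=\p(\sigma_n\in\mathfrak{S}_{n,g^{\rho}})$ for every $\rho$, so the quantity $p_0:=\p(\sigma_n\in\mathfrak{S}_{n,g})$ depends only on the isomorphism class of $g$ and is shared by every relabeled copy.

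Then I would manufacture many disjoint copies with controlled multiplicity. Fix one anchor vertex in each of the $p$ non-trivial components of $\tilde g$, and for each injection $\psi$ from the remaining $v-p$ non-isolated vertices into the $n-p$ non-anchor vertices, let $g_\psi$ be the image of $g$ under the relabeling $\phi_\psi$ that fixes the anchors and sends each non-anchor non-isolated vertex $w$ to $\psi(w)$. There are exactly $\frac{(n-p)!}{(n-v)!}=\binom{n-p}{v-p}(v-p)!$ such injections. Because each component is a directed path or a directed cycle, once its anchor is fixed it has no non-identity automorphism; hence an automorphism of $\tilde g$ fixing one anchor per component must be the identity, so distinct $\psi$ yield \emph{distinct} graphs $g_\psi$. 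Finally, for $\psi_1\neq\psi_2$ the relabeling $\rho=\phi_{\psi_2}\circ\phi_{\psi_1}^{-1}$ realizing the isomorphism $g_{\psi_1}\to g_{\psi_2}$ fixes an anchor, hence a vertex, in every non-trivial component, so Lemma~\ref{lemma15} applies and yields $\mathfrak{S}_{n,g_{\psi_1}}\cap\mathfrak{S}_{n,g_{\psi_2}}=\emptyset$ or $A_{g_{\psi_1}}=A_{g_{\psi_2}}$; since the $g_\psi$ are pairwise distinct, the events $\{\sigma_n\in\mathfrak{S}_{n,g_\psi}\}$ are pairwise disjoint. Each has probability $p_0$, so $\binom{n-p}{v-p}(v-p)!\cdot p_0=\sum_\psi\p(\sigma_n\in\mathfrak{S}_{n,g_\psi})\leq 1$, which is exactly the claimed bound.

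The main obstacle I anticipate is the rigidity bookkeeping of the third step: confirming that fixing one vertex per component kills all automorphisms (including those that would permute isomorphic components), so that the $\binom{n-p}{v-p}(v-p)!$ anchored placements give genuinely distinct graphs. This is what collapses the ``disjoint-or-equal'' dichotomy of Lemma~\ref{lemma15} into honest disjointness and guarantees that no probability mass is double-counted; everything else is a routine combination of the conjugation identity and a union bound.
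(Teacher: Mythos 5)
Your proposal is correct and follows essentially the same route as the paper: reduce to the case $\mathfrak{S}_{n,g}\neq\emptyset$ so that non-trivial components are directed paths or cycles, anchor one vertex per component, generate the $\binom{n-p}{v-p}(v-p)!$ relabeled copies by injections of the remaining $v-p$ non-isolated vertices, invoke Lemma~\ref{lemma15} (with the anchors as fixed points) to get pairwise disjointness, and combine conjugation invariance with a union bound. Your explicit rigidity argument that an automorphism fixing one anchor per component must be the identity is spelled out more carefully than in the paper, but it is the same underlying idea.
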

\begin{proof}
If there exist $i,j,l$, with $j\neq l $  such that $\{(i,j)\cup(i,l)\} \subset E_g$ or $\{(j,i)\cup(l,i)\} \subset E_g$  then $\mathfrak{S}_{n,g} =\emptyset$. Therefore, if $\mathfrak{S}_{n,g} \neq \emptyset$, then  non-trivial connected components of $g$ having $w$ vertices  are either cycles of length $w$ or isomorphic to $\overline{g}_w$, where $
A_{\overline{g}_w}=[\mathbbm{1}_{j=i+1}]_{1\leq i,j\leq w}.$\\
For example, $\overline{g}_5=\begin {tikzpicture}[-latex ,auto ,node distance =1 cm and 1.5cm ,on grid ,
semithick ,
state/.style ={ circle ,top color =white , bottom color = processblue!20 ,
draw,processblue , text=blue , minimum width =0.1 cm}]
\node[state] (A) {$1$};
\node[state] (B) [right =of A] {$2$};
\node[state] (C) [right =of B] {$3$};
\node[state] (D) [right =of C] {$4$};
\node[state] (E) [right =of D] {$5$};
\path (A) edge [bend left =25]  (B);
\path (B) edge [bend left =25]  (C);
\path (D) edge [bend left =25]  (E);
\path (C) edge [bend left =25]  (D);
\end{tikzpicture}$.
Let $g\in \mathbb{G}^n_k$ such that $\mathfrak{S}_{n,g} \neq \emptyset$.
Fix  $p$ vertices $x_1,x_2,\dots,x_p$  each belonging  to a different  non-trivial connected components of $g$.  Let $\{x_1,x_2,\dots x_p,\dots,x_v\}$ be the set of non-isolated vertices of $g$. 
Let $$F= \{(y_i)_{p+1\leq i \leq n}; y_i \in \{1,2,\dots,n\}\setminus\{x_1,\dots x_p\} \text{ pairwise distinct}\}.$$ 
Given $y=(y_i)_{p+1\leq i \leq n}\in F$, we denote by $g_y \in \mathbb{G}^n_k$ the graph isomorphic to $g$ obtained by fixing the labels of $ x_1,x_2,\dots,x_p$ and by changing the labels of $x_i$ by $y_i$ for $p+1\leq i\leq v$. Since non trivial connected  components of $g$ of length $w$ are either cycles or isomorphic to $\bar{g}_w$, if $y \neq y' \in F$, then $g_y\neq g_{y'}$ and by Lemma~\ref{lemma15}, $\mathfrak{S}_{n,g_{y}}\cap\mathfrak{S}_{n,g_{y'}}=\emptyset$. Since $\sigma_n$ is invariant under conjugation, we have $ \p(\sigma_n\in\mathfrak{S}_{n,g_{y}})= \p(\sigma_n\in\mathfrak{S}_{n,g_{y'}})= \p(\sigma_n\in\mathfrak{S}_{n,g}).$
Therefore,
\begin{align*}
     \p(\sigma_n\in\mathfrak{S}_{n,g})=\frac{\sum_{y\in F}\p(\sigma_n\in\mathfrak{S}_{n,g_{y}})}{card{(F)}}=\frac{\p(\sigma_n\in\cup_{y\in F}\mathfrak{S}_{n,g_{y}})}{card(F)}\leq \frac{1}{card(F)}= \frac{1}{{{n-p}\choose{v-p}}(v-p)!}.
\end{align*}
\end{proof}
We will now prove Lemma \ref{lem15}.
\begin{proof}[Proof of Lemma \ref{lem15}]
Note that $\hat{\mathbb{G}}_k$ is finite. Therefore, it is sufficient  to prove that for any $\hat{g}_1,\hat{g}_2 \in \hat{\mathbb{G}}_k$ having the same number of vertices, there exist two  constants $C_{\hat{g}_1,\hat{g}_2}$ and $C'_{\hat{g}_1,\hat{g}_2}$ such that for any integer $n$, 
\begin{align*}
    \p((\hat{\mathcal{G}}^1_1(\sigma_{1,n},\sigma_{2,n}),\hat{\mathcal{G}}^1_2(\sigma_{1,n},\sigma_{2,n}))=(\hat{g}_1,\hat{g}_2)))\leq \frac{C_{\hat{g}_1,\hat{g}_2}}{n}+ C'_{\hat{g}_1,\hat{g}_2}(\p(\sigma_{1,n}(1)=1)+\p(\sigma_{2,n}(1)=1)).
\end{align*}
Let $\hat{g}_1,\hat{g}_2 \in \hat{\mathbb{G}}_k$ be two unlabeled graphs having respectively $p_1$ and $p_2$ connected component and   $v\leq 2k$  vertices. Let  $B^n_{\hat{g}_1,\hat{g}_2}$ be the set of couples  $({g}_1,{g}_2) \in (\mathbb{G}^n_k)^2$ having the same non-isolated vertices such that $1$ is a  non-isolated vertex of both graphs and, for $i \in \{1,2\}$, the 
equivalence class  of $g_i$ is  $\hat{g}_i$. 
\begin{itemize}
    \item [-]
Suppose that $\hat{g}_1$ and $\hat{g}_2$ do not contain any loop i.e  no edges of type $(i,i)$.
Then $p_1\leq \frac{v}{2}$ and $p_2\leq \frac{v}{2}$. Consequently,

\begin{align*}
        &\p((\hat{\mathcal{G}}^1_1(\sigma_{1,n},\sigma_{2,n}),\hat{\mathcal{G}}^1_2(\sigma_{1,n},\sigma_{2,n}))=(\hat{g}_1,\hat{g}_2))) 
        \\=& \sum_{(g_1,g_2)\in B^n_{\hat{g}_1,\hat{g}_2}} \p((\mathcal{G}^1_1(\sigma_{1,n},\sigma_{2,n}),\mathcal{G}^1_2(\sigma_{1,n},\sigma_{2,n}))=(g_1,g_2))
        \\\leq& \sum_{(g_1,g_2)\in B^n_{\hat{g}_1,\hat{g}_2}}
        \p(\sigma_{1,n}\in \mathfrak{S}_{n,g_1},
        \sigma_{2,n}\in \mathfrak{S}_{n,g_2})
        \\=& \sum_{(g_1,g_2)\in B^n_{\hat{g}_1,\hat{g}_2}}
        \p(\sigma_{1,n}\in \mathfrak{S}_{n,g_1})
        \p(\sigma_{2,n}\in \mathfrak{S}_{n,g_2})
        \\\leq& \sum_{(g_1,g_2)\in B^n_{\hat{g}_1,\hat{g}_2}} \frac{1}{{{n-p_1}\choose{v-p_1}}(v-p_1)!}
        \frac{1}{{{n-p_2}\choose{v-p_2}}(v-p_2)!}
        \\=&
        \frac{card(B^n_{\hat{g}_1,\hat{g}_2})}{{{n-p_1}\choose{v-p_1}}(v-p_1)!{{n-p_2}\choose{v-p_2}}(v-p_2)!}
        \\ \leq& 
        \frac{{{n-1}\choose{v-1}} {v!}^2 }{{{n-p_1}\choose{v-p_1}}(v-p_1)!{{n-p_2}\choose{v-p_2}}(v-p_2)!}
        \\\leq & C_{g_1,g_2}n^{v-1-(v-p_1+v-p_2)}
        = C_{g_1,g_2}n^{p_1+p_2-v-1}
        \leq \frac{C_{g_1,g_2}}{n}.
\end{align*}
\item[-]
Suppose that $\hat{g}_1$ contains a loop. By Lemma \ref{lem20}, 
if $\hat{\mathcal{G}}^m_1(\sigma_{1},\sigma_{2})=\hat{g}_1$, then there exists $j$ a fixed point of $\sigma_1$ such that $k_j=k$ and  $j\in \{i^{m}_l, 1\leq l\leq k\}$.
Thus,  almost surely, 
\begin{align*}
\sum_{i=1}^n \mathbf{1}_{\hat{\mathcal{G}}^i_1(\sigma_{1,n},\sigma_{2,n})=\hat{g}_1}  &\leq k\ card(\{i \in fix(\sigma_{1,n}); k_{i}=k\})\leq k \ card(fix(\sigma_{1,n})),
\end{align*}
where $fix(\sigma)$ is the set of fixed points of $\sigma$. 
Consequently, since $\sigma_{1,n}$ is invariant under conjugation,
\begin{align*}
\p\left(\left(\hat{\mathcal{G}}^1_1(\sigma_{1,n},\sigma_{2,n}),\hat{\mathcal{G}}^1_2(\sigma_{1,n},\sigma_{2,n})\right)=(\hat{g}_1,\hat{g}_2)\right) &\leq \p\left(\hat{\mathcal{G}}^1_1(\sigma_{1,n},\sigma_{2,n})=\hat{g}_1\right)\\&= \frac{\sum_{i=1}^n \p\left({\hat{\mathcal{G}}^i_1(\sigma_{1,n},\sigma_{2,n})=\hat{g}_1}\right)}{n} \\&\leq k\frac{\e(card(fix(\sigma_{1,n})))}{n}\\&=k\p(\sigma_{1,n}(1)=1).
\end{align*}
Similarly, if $\hat{g}_2$ contains a loop, then 
\begin{align*}
\p\left(\left(\hat{\mathcal{G}}^1_1(\sigma_{1,n},\sigma_{2,n}),\hat{\mathcal{G}}^1_2(\sigma_{1,n},\sigma_{2,n})\right)=(\hat{g}_1,\hat{g}_2)\right)\leq k\p(\sigma_{2,n}(1)=1).
\end{align*}
\end{itemize}
\end{proof}
We will now prove Proposition~\ref{thm7}.
\begin{proof}[Proof of Proposition~\ref{thm7}]
Under the condition of Proposition~\ref{thm7},
\begin{itemize}
    \item [-]Assume that \begin{equation*} 
    \liminf_{n\to\infty} \sqrt{n}{\p(\sigma_{1,n}(1)=1)\p(\sigma_{2,n}(1)=1)}\geq \alpha.
\end{equation*}
In this case, 
    \begin{align*}
   \liminf_{n\to \infty}
   \frac{\e(LCS(\sigma_{1,n},\sigma_{2,n}))}{\sqrt{n}}&\geq \liminf_{n\to \infty}\frac{ \e(card(fix(\sigma_{1,n}\circ\sigma_{2,n}^{-1})))}{\sqrt{n}}
   \\&\geq \liminf_{n\to \infty}
\sqrt{n}{\p(\sigma_{1,n}(1)=1)\p(\sigma_{2,n}(1)=1)}\\&\geq \alpha.
\end{align*}
\item[-] Assume that
\begin{equation}\label{H1}
    \lim_{n\to\infty} \max(\p(\sigma_{1,n}(1)=1),\p(\sigma_{2,n}(1)=1))=0.
\end{equation}
\paragraph{}
In this case, 
\begin{align*}
    \p\left( \sigma^{-1}_{1,n}\circ\sigma_{2,n}(1)=1\right)&=\sum_{i=1}^n \mathbb{P}(\sigma_{1,n}(1)=i)\mathbb{P}(\sigma_{2,n}(1)=i)\\&= \p(\sigma_{1,n}(1)=1)\p(\sigma_{2,n}(1)=1)\\&+\frac{(1-\p(\sigma_{1,n}(1)=1))(1-\p(\sigma_{2,n}(1)=1))}{n-1}
    \\&=o(1).
\end{align*}
For any random permutation $\sigma_n\in \s$ invariant under conjugation,
\begin{align*}
    \e(\#(\sigma_n))=\e \left(\sum_{i=1}^n \frac{1}{c_i(\sigma_n)}\right)=\sum_{i=1}^n \e \left(\frac{1}{c_i(\sigma_n)}\right)=n\e\left(\frac{1}{c_1(\sigma_n)}\right),
\end{align*}
and 
 for $n_\beta:=\floor{\frac{1}{\beta}}+1$, with the same  $\beta$ as in Corollary \ref{col14},
\begin{align*}
    \frac{\e(\#(\sigma_n))}{n}&= \sum_{k=1}^\infty \frac{1}{k}\p(c_1(\sigma_n)=k) \\&\leq 
     \p(c_1(\sigma_n)=1) +\sum_{k=2}^{n_\beta} \p(c_1(\sigma_n)=k) + \frac{1}{n_\beta+1}  \sum_{k=n_\beta+1}^{\infty} \p(c_1(\sigma_n)=k)
     \\ &\leq   \p(\sigma_n(1)=1)+    \sum_{k=2}^{n_\beta} \p(c_1(\sigma_n)=k) + \frac{1}{n_\beta+1}.
\end{align*}
Consequently,  under \eqref{H1}, by Lemma \ref{lem15}, we have
\begin{align*}
    \frac{\e(\#(\sigma_{1,n}\circ\sigma_{2,n}^{-1}))}{n}  & \leq \frac{1}{n_\beta+1}+o(1)< \beta+o(1).
\end{align*}
Hence, we obtain Proposition~\ref{thm7} thanks to Corollary \ref{col14}.
\end{itemize}
\end{proof}
\begin{proof}[Proof of Corollary \ref{thmp}]
This is a direct application of  Proposition~\ref{thm7}. In fact, if 
$$\p(\sigma_{1,n}(1)=1) \geq \frac{\sqrt{2}}{\sqrt[4]{n}},$$
then 
\begin{equation*} 
    \liminf_{n\to\infty} \sqrt{n}{\p(\sigma_{1,n}(1)=1)\p(\sigma_{2,n}(1)=1)}\geq 2.
\end{equation*}
Otherwise, 
\begin{equation*}
    \lim_{n\to\infty} \max(\p(\sigma_{1,n}(1)=1),\p(\sigma_{2,n}(1)=1))=0.
\end{equation*}
\end{proof}
\subsection{Proof of Theorem~\ref{thm6}}  \label{pthm6}
\paragraph{}
By observing  that  if $\sigma_{1,n}$ and $\sigma_{2,n}$ are independent random permutations with distribution invariant under conjugation then $\sigma_{1,n}^{-1}\circ\sigma_{2,n}$ is invariant under conjugation,
proving Theorem~\ref{thm6} is equivalent to prove  the following.
\begin{theorem}\label{thm20}
For any sequence of random permutations $\{\sigma_n\}_{n\geq1}$ invariant under conjugation,
\begin{align*}
    \liminf_{n\to\infty}\frac{\e(\ell(\sigma_n))}{\sqrt{n}}\geq 2\sqrt{\theta}.
\end{align*}
\end{theorem}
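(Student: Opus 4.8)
The reduction just made means it is enough to bound $\e(\ell(\sigma_n))$ from below for one conjugation-invariant sequence $\{\sigma_n\}_{n\ge 1}$, with no hypothesis on its cycle structure. My engine is the deterministic comparison of Lemma~\ref{lem21} fed with the shape of the Ewens-zero measure. For each $n$ I take $\rho=T(\sigma_n)$; by Lemma~\ref{basiclem} it is distributed as $Ew(0)$ and lies in $A_{\sigma_n}$, so Lemma~\ref{lem21} gives almost surely
\[
\ell(\sigma_n)\ \ge\ \sup\Big\{k\in\mathbb N:\ \sum_{i\ge 1}\big(\lambda_i(T(\sigma_n))-k\big)_+\ \ge\ \#(\sigma_n)\Big\}.
\]
Taking $k=\gamma\sqrt n$ and using Corollary~\ref{col11}, upgraded to hold simultaneously for all $\gamma$ in a compact interval (which follows from the uniform shape convergence in Lemma~\ref{lem10}), together with the monotonicity of $G$, I get that with probability tending to $1$,
\[
\ell(\sigma_n)\ \ge\ \sqrt n\,\big(G^{-1}(\#(\sigma_n)/2n)-\varepsilon\big).
\]
This is the comparison with the uniform law; I call it the first bound, and the passage to $\liminf \e(\ell)/\sqrt n$ from such a high-probability estimate is exactly as in the proof of Corollary~\ref{col14}.

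The first bound settles every regime but one. Since a permutation with $F$ fixed points has at most $F+(n-F)/2$ cycles, one has $F\ge 2\#(\sigma_n)-n$, and as fixed points form an increasing subsequence, $\ell(\sigma_n)\ge F\ge 2\#(\sigma_n)-n$. Thus when $\#(\sigma_n)\ge (1/2+\delta)n$ this elementary bound is already of order $n$, while when $\#(\sigma_n)\le n/2$ the first bound gives at least $G^{-1}(1/4)\sqrt n$; in particular $\#(\sigma_n)=o(n)$ forces $\liminf\ge 2$, matching Corollary~\ref{col14}. The single configuration that resists is $\#(\sigma_n)\approx n/2$ with vanishingly few fixed points — that is, $\sigma_n$ close to a fixed-point-free involution — because there the first bound only delivers $G^{-1}(1/4)<2\sqrt\theta$. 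This is precisely where the second reference announced in the introduction, the uniform measure on involutions, is needed.

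For this regime the plan is to run the same scheme against an involution rather than a single cycle. I will couple $\sigma_n$ to a uniform involution by resampling only its cycles of length $\ge 3$ — few in this regime, so that the number of transpositions involved is controlled and a Corollary~\ref{col13}/Lemma~\ref{lem21}-type estimate transfers the relevant shape with a negligible error — and then use the limiting RSK shape of the uniform involution (the Gelfand limit shape) to produce a second lower bound $\ell(\sigma_n)\ge \sqrt n\,H\!\big(\#(\sigma_n)/2n\big)$, where $H$ beats $G^{-1}$ as soon as the proportion of $2$-cycles is large. The worst case is then the minimiser of $c\mapsto\max\{G^{-1}(c),H(c)\}$, which sits at the crossing of the decreasing curve $G^{-1}$ and the increasing curve $H$; writing the common value as $2\sqrt\theta$ and the crossing abscissa in closed form turns the stationarity condition into $G(2\sqrt\theta)=\frac{2+\theta}{12}$, which defines $\theta$ and fixes the constant.

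The hard part is this second bound: making the involution comparison rigorous, namely the coupling to a uniform involution with a genuine control of the shape error in the presence of longer cycles, and the explicit description of the involution limit shape needed to identify $H$ and hence the abscissa $\frac{2+\theta}{12}$. The first bound, by contrast, is essentially mechanical from Lemmas~\ref{lem21}, \ref{lem10} and Corollary~\ref{col11}, and once both limit shapes are available the final step is a one-variable optimisation. A secondary technical point is the uniform-in-$\gamma$ version of Corollary~\ref{col11} and, in the intermediate cycle types, the bookkeeping of how fixed points and long cycles interleave under conjugation invariance.
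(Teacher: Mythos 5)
Your first bound is sound and coincides with the paper's first case (the paper conditions on the conjugation-invariant event $\{\#(\sigma_n)\le\frac{(2+\theta)n}{6}\}$ — invariance is what keeps $T(\sigma_n)\sim Ew(0)$ after conditioning — and uses a single fixed $\gamma=2\sqrt\theta-\varepsilon$ rather than a uniform-in-$\gamma$ Corollary~\ref{col11}; that difference is cosmetic). The genuine gap is the second bound, which you yourself defer as ``the hard part'', and whose announced plan rests on a false premise. The residual regime is not $\#(\sigma_n)\approx n/2$: after your first bound and the fixed-point bound $\ell(\sigma_n)\ge 2\#(\sigma_n)-n$, what resists is the whole window $\#(\sigma_n)/n\in\left(\frac{2+\theta}{6},\frac12+\delta\right)$, with $\frac{2+\theta}{6}\simeq 0.347$. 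In that window $\sigma_n$ need not be anywhere near an involution: with no fixed points, $\#_2+\#_{\ge3}=c'n$ and $2\#_2+3\#_{\ge3}\le n$ allow up to $(1-2c')n$ cycles of length $\ge 3$; at $c'=0.35$ that is about $0.3n$ three-cycles covering about $0.9n$ points. Resampling those cycles modifies the permutation on a macroscopic set, and any transfer estimate in the style of Corollary~\ref{col13}, whose error is controlled by the number of merged or modified cycles, incurs an error of order $n$, which swamps the $\sqrt n$ scale. So ``few long cycles, negligible error'' fails exactly where the bound is needed, and your coupling as described would not close the argument.

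The paper's device avoids touching the long cycles altogether. In the case $\#(\sigma_n)>\frac{(2+\theta)n}{6}$ it splits on $card(fix(\sigma_n))\gtrless 2\sqrt{\theta n}$: if the fixed points are numerous, $\ell(\sigma_n)\ge card(fix(\sigma_n))$ already suffices; if not, the counting inequality of Lemma~\ref{finlem}, $card(fix(\sigma^2))\ge 6\#(\sigma)-3\,card(fix(\sigma))-2n$, shows that the set $fix(\sigma_n^2)$ (fixed points together with points in $2$-cycles) has cardinality at least $\theta n-6\sqrt{\theta n}$. The restriction of $\sigma_n$ to this set is an involution whose increasing subsequences are increasing subsequences of $\sigma_n$, and conditionally on the set it is conjugation-invariant; Corollary~\ref{fincor} then decomposes over the random set and invokes Corollary~\ref{cor24}, itself proved with a second Markov operator $T'$ that merges \emph{pairs of fixed points} (a perturbation controlled by $card(fix)$, which is small in this case), compared against the Gelfand measure whose limit shape is again $\Omega$ by M\'eliot's theorem. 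This also dissolves your other ``hard part'': no new limit-shape computation identifying $H$ is required — the increasing branch is simply $c\mapsto 2\sqrt{(12c-2)_+}$ in your variable $c=\#/2n$, coming from the size bound on $fix(\sigma_n^2)$, and its crossing with $G^{-1}$ is exactly the paper's defining equation $G(2\sqrt\theta)=\frac{2+\theta}{12}$, which your one-variable optimisation correctly anticipated. In short: your architecture (two comparison measures plus an optimisation pinning $\theta$) matches the paper, but the decisive step is absent, and the proposed route to it — resampling long cycles — would fail quantitatively; restriction to the involutive part, plus the fixed-point dichotomy and Lemma~\ref{finlem}, is the missing idea.
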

The argument will be by comparison with the uniform measure on $\s$ and the uniform measure on the set of involutions. We will use the  uniform permutation on $\s$ if we have a few number of cycles. Otherwise, we will use the uniform measure on the set of involution  since it has approximately 
$\frac{n}{2}$ cycles with high probability.  In this section, we denote by $\s^2:=\{\sigma\in\s,\sigma\circ\sigma=Id_n\}$ the set of involution of $\s$. If $\sigma_n$ is distributed according to the uniform distribution on $\s^2$,  the distribution of $\lambda(\sigma_n)$ on the set of Young diagrams $\mathbb{Y}_n$ is known as the Gelfand distribution. In particular, we have the following results.
\begin{proposition}\citep[Theorem 1]{meliot:hal-01215045}
If $\sigma_n$ is distributed according to the uniform distribution on $\s^2$, then 
\begin{align*}
\lim_{n\to \infty} \mathbb{P}\left(\sup_{s\in \mathbb{R}} \left|\frac{1}{\sqrt{2n}}L_{\lambda(\sigma_n)}\left({s}{\sqrt{2n}}\right)-\Omega(s)\right|<\varepsilon\right) =1.
\end{align*}
\end{proposition}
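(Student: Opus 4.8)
The plan is to reduce the statement to a large-deviation analysis of the \emph{Gelfand measure} on Young diagrams and to show that, although this measure concentrates more slowly than the Plancherel measure (its large-deviation rate is exactly half as large), it still concentrates on the same profile $\Omega$. First I would identify the law of $\lambda(\sigma_n)$. Under the Robinson--Schensted correspondence an involution $\sigma\in\s^2$ is sent to a pair $(P,P)$ consisting of a single standard Young tableau $P$ taken twice; hence involutions of $\inin$ are in bijection with standard Young tableaux of size $n$, and the RSK shape $\lambda$ is attained by exactly $f^{\lambda}$ involutions, where $f^{\lambda}$ is the number of standard Young tableaux of shape $\lambda$. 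Writing $I_n:=|\s^2|=\sum_{\mu\vdash n}f^{\mu}$, the uniform measure on $\s^2$ pushes forward to $\p(\lambda(\sigma_n)=\lambda)=\frac{f^{\lambda}}{I_n}$, to be contrasted with the Plancherel weight $\frac{(f^{\lambda})^2}{n!}$ already known (Lemma~\ref{lem10} and the surrounding discussion) to concentrate on $\Omega$.

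Second, I would set up the classical Logan--Shepp--Vershik--Kerov variational framework. By the hook length formula, $\log f^{\lambda}=\log n!-\sum_{\square\in\lambda}\log h(\square)$, and a continuum approximation (Stirling on $n!$ together with a Riemann-sum comparison of the hook sum, after rescaling the diagram by $\sqrt{2n}$) yields a \emph{uniform} estimate of the form $\log f^{\lambda}=\tfrac12 n\log n-n\bigl(\tfrac12+\mathcal H(\omega_\lambda)\bigr)+o(n)$, where $\omega_\lambda(s):=\tfrac{1}{\sqrt{2n}}L_{\lambda}(s\sqrt{2n})$ is the rescaled profile and $\mathcal H$ is the Vershik--Kerov hook functional. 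The content of the Logan--Shepp--Vershik--Kerov theorem is precisely that $\mathcal H\geq 0$, that $\mathcal H$ is lower semicontinuous and strictly convex in the relevant sense, and that it vanishes \emph{only} at $\omega=\Omega$. The crucial observation is that the Gelfand exponent $\log f^{\lambda}$ and the Plancherel exponent $2\log f^{\lambda}-\log n!$ are governed by the \emph{same} functional $\mathcal H$, the Gelfand rate being half the Plancherel one; halving the rate does not move the minimizer, so the typical shape is again $\Omega$.

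Third, I would turn this into uniform concentration. Fix $\varepsilon>0$ and set $h(\varepsilon):=\inf\{\mathcal H(\omega):\ \|\omega-\Omega\|_\infty\geq\varepsilon\}$, which is strictly positive by uniqueness of the minimizer together with lower semicontinuity and compactness of the space of rescaled profiles. For every $\lambda$ with $\|\omega_\lambda-\Omega\|_\infty\geq\varepsilon$ the estimate above gives $f^{\lambda}\leq\exp\bigl(\tfrac12 n\log n-n(\tfrac12+h(\varepsilon))+o(n)\bigr)$, while Laplace's principle applied to $I_n=\sum_{\mu}f^{\mu}$ together with $\mathcal H(\Omega)=0$ gives $I_n=\exp\bigl(\tfrac12 n\log n-\tfrac12 n+o(n)\bigr)$. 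Hence each such $\lambda$ carries Gelfand mass at most $\exp(-n\,h(\varepsilon)+o(n))$, and since the number of partitions of $n$ is $p(n)=e^{o(n)}$ (in fact $e^{O(\sqrt n)}$), a union bound yields
\[
\p\Bigl(\sup_{s\in\R}\bigl|\tfrac{1}{\sqrt{2n}}L_{\lambda(\sigma_n)}(s\sqrt{2n})-\Omega(s)\bigr|\geq\varepsilon\Bigr)\leq p(n)\,e^{-n\,h(\varepsilon)+o(n)}\xrightarrow[n\to\infty]{}0,
\]
which is exactly the claimed convergence in probability.

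The main obstacle is the analytic heart of the second step: proving the hook-length asymptotic \emph{uniformly} in $\lambda$ and the variational statement that $\mathcal H$ admits $\Omega$ as its unique zero. This is precisely the Logan--Shepp--Vershik--Kerov analysis, and the only genuinely new points relative to the Plancherel case are the bookkeeping of the normalization $I_n$ and the remark that replacing the squared weight $(f^{\lambda})^2$ by $f^{\lambda}$ rescales the rate function by $\tfrac12$ without displacing its minimizer. Everything downstream---the strict positivity of $h(\varepsilon)$, the Laplace estimate for $I_n$, and the subexponential partition bound---is then routine.
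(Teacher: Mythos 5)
Your argument is sound in outline, but it takes a genuinely different route from the paper: the paper does not prove this proposition at all---it is quoted verbatim as Theorem 1 of M\'eliot's work on Gelfand measures, whose proof proceeds algebraically, via the Kerov--Olshanski algebra of polynomial observables on Young diagrams and asymptotics of character values under the Gelfand measure $\lambda\mapsto f^\lambda/I_n$ (from which the law of large numbers for the rescaled profile, and even a central limit theorem, follow). You instead give a direct large-deviation proof in the Logan--Shepp--Vershik--Kerov variational style: the pushforward identification $\p(\lambda(\sigma_n)=\lambda)=f^\lambda/I_n$ via Robinson--Schensted on involutions is correct, the uniform hook-integral estimate for $\log f^\lambda$ with $o(n)$ error is a known (if technical) part of the LSVK analysis, and your key remark---that passing from the Plancherel weight $(f^\lambda)^2/n!$ to the Gelfand weight $f^\lambda/I_n$ halves the rate function without moving its unique minimizer $\Omega$---is exactly right, as is the union bound over $p(n)=e^{O(\sqrt n)}$ partitions. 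What your approach buys is a self-contained concentration statement with an explicit exponential rate, at the cost of redoing the hard analytic core; what the cited approach buys is finer information (fluctuations, hence the CLT) and avoids hook asymptotics entirely.

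One point needs repair: you justify $h(\varepsilon)=\inf\{\mathcal H(\omega):\|\omega-\Omega\|_\infty\geq\varepsilon\}>0$ by ``compactness of the space of rescaled profiles,'' but that space is not compact---the rescaled profile of $\lambda=(n)$ differs from $|s|$ out to $|s|\asymp\sqrt{n/2}$, so the profiles of partitions of $n$ are not uniformly compactly supported. This is fixable by the standard truncation: the area constraint $\int(\omega(s)-|s|)\,\mathrm{d} s=2$ forces any excess mass far from the origin to incur a definite positive contribution to $\mathcal H$ (equivalently, diagrams with $\lambda_1\geq L\sqrt{n}$ have $f^\lambda$ superexponentially small by direct hook-length bounds), after which the infimum is effectively over a compact family where lower semicontinuity and uniqueness of the minimizer apply. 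With that adjustment your proof goes through.
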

\begin{proposition}\citep[Page 692, Proposition IX.19]{9780521898065}
If $\sigma_n$ is distributed according to the uniform distribution on $\s^2$ then
\begin{align*}
    \lim_{n\to\infty}\frac{\e(card(fix(\sigma_n)))}{\sqrt{n}}=1.
\end{align*}
\end{proposition}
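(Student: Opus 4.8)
The plan is to reduce the statement to a purely enumerative estimate on the number of involutions. Write $I_n$ for the number of involutions of $\inin$, so that $\sigma_n$ is uniform on the $I_n$-element set $\s^2$. Since an involution fixes each point or exchanges it with exactly one other point, linearity of expectation together with the symmetry of the uniform law under conjugation gives
\begin{align*}
\e(card(fix(\sigma_n)))=\sum_{i=1}^n\p(\sigma_n(i)=i)=n\,\p(\sigma_n(1)=1).
\end{align*}
The involutions of $\inin$ fixing $1$ are in bijection with the involutions of $\{2,\dots,n\}$ (extend by $1\mapsto 1$), so $\p(\sigma_n(1)=1)=I_{n-1}/I_n$ and hence
\begin{align*}
\frac{\e(card(fix(\sigma_n)))}{\sqrt n}=\sqrt n\,\frac{I_{n-1}}{I_n}.
\end{align*}
Everything therefore reduces to proving that $I_{n-1}/I_n\sim n^{-1/2}$.

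First I would establish the classical recurrence by conditioning on the image of the point $n$: either $n$ is fixed, contributing $I_{n-1}$ involutions, or $n$ is swapped with one of the remaining $n-1$ points, contributing $(n-1)I_{n-2}$ involutions. Thus
\begin{align*}
I_n=I_{n-1}+(n-1)I_{n-2},\qquad I_0=I_1=1.
\end{align*}
Setting $a_n:=I_n/I_{n-1}$ and dividing by $I_{n-1}$ turns this into the scalar recursion
\begin{align*}
a_n=1+\frac{n-1}{a_{n-1}}.
\end{align*}
The target $I_{n-1}/I_n\sim n^{-1/2}$ is exactly the assertion $a_n\sim\sqrt n$, i.e. $a_n/\sqrt n\to 1$.

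Next I would prove $a_n\sim\sqrt n$ by a squeeze. The heuristic fixed point of $a_n=1+(n-1)/a_{n-1}$ is $a_n\approx\sqrt n+\tfrac12$, which already exhibits the right order; to make this rigorous I would show by induction that $\sqrt n\le a_n\le \sqrt n+1$ for every $n\ge 2$ (the base case $a_2=I_2/I_1=2$ is immediate). The upper bound follows from $a_n\le 1+(n-1)/\sqrt{n-1}=1+\sqrt{n-1}\le 1+\sqrt n$. The lower bound follows from $a_n\ge 1+(n-1)/(\sqrt{n-1}+1)$ together with the elementary inequality $(n-1)/(\sqrt{n-1}+1)\ge\sqrt n-1$, which, after the substitution $u=\sqrt{n-1}$, is equivalent to $(u^2+u+1)^2\ge(u^2+1)(u+1)^2$, i.e. to $u^2\ge 0$. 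Both bounds force $a_n/\sqrt n\to 1$, whence $\e(card(fix(\sigma_n)))/\sqrt n=\sqrt n/a_n\to 1$, which is the claim. As an alternative — and this is the route underlying the cited reference — one may instead use the exponential generating function $\sum_{n\ge0} I_n x^n/n!=e^{x+x^2/2}$ together with the standard saddle-point asymptotic $I_n\sim\tfrac1{\sqrt2}\,n^{n/2}e^{-n/2+\sqrt n-1/4}$, from which the ratio $I_{n-1}/I_n\sim n^{-1/2}$ drops out directly.

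The only point carrying any content is the asymptotic $a_n\sim\sqrt n$ (equivalently $I_{n-1}/I_n\sim n^{-1/2}$); the reduction to this ratio and the recurrence for $I_n$ are routine. Even that content is mild: the induction bounds are tight at the lower end, but the single inequality one must verify collapses to $u^2\ge 0$, so the whole argument is essentially elementary. The saddle-point alternative is conceptually shorter but hides the same estimate inside a standard analytic-combinatorics computation, so I would present the recurrence-based squeeze as the self-contained proof and cite the generating-function asymptotic only as a cross-check.
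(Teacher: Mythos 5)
Your proof is correct, and it is genuinely different from what the paper does: the paper offers no proof at all for this proposition, simply importing it from Flajolet and Sedgewick (Proposition IX.19), where the underlying argument runs through the exponential generating function $e^{x+x^2/2}$ of involutions and saddle-point asymptotics giving $I_n\sim\tfrac{1}{\sqrt{2}}\,n^{n/2}e^{-n/2+\sqrt{n}-1/4}$ --- exactly the alternative route you mention at the end. Your self-contained argument checks out at every step: the reduction $\e(card(fix(\sigma_n)))=n\,\p(\sigma_n(1)=1)=n\,I_{n-1}/I_n$ is valid (conjugation invariance of the uniform law on $\mathfrak{S}_n^2$ makes all $\p(\sigma_n(i)=i)$ equal, and involutions fixing $1$ biject with involutions of $\{2,\dots,n\}$), the recurrence $I_n=I_{n-1}+(n-1)I_{n-2}$ is the classical one, and the inductive squeeze $\sqrt{n}\le a_n\le\sqrt{n}+1$ for $a_n=I_n/I_{n-1}$ closes correctly from the base case $a_2=2$: the upper bound is immediate, and your key inequality reduces, as you say, to $(u^2+u+1)^2-(u^2+1)(u+1)^2=u^2\ge 0$ with $u=\sqrt{n-1}$. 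What your route buys is notable: beyond the limit, it yields explicit non-asymptotic two-sided bounds, namely $\e(card(fix(\sigma_n)))=n/a_n\le\sqrt{n}$ and $\e(card(fix(\sigma_n)))\ge n/(\sqrt{n}+1)$ for all $n\ge 2$, which is quantitative information of the kind this paper cares about (e.g., its universal threshold $n_1$), whereas the cited generating-function route buys full asymptotic expansions and distributional (Gaussian) refinements that are not needed here.
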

We will now prove the following.
\begin{corollary} \label{cor24}
If $\sigma_n$ is invariant under conjugation and supported on $\s^2$ then 
\begin{align*}
    \liminf_{n\to\infty} \frac{\e(\ell(\sigma_n))}{\sqrt{n}}\geq 2.
\end{align*}
\end{corollary}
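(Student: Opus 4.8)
The plan is to reduce to the uniform measure on the set of involutions, for which the analogue of Lemma~\ref{lem10} is available (the cited Gelfand‑measure Proposition), and to control the number of fixed points of $\sigma_n$ by cases. Write $F_n$ for the number of fixed points of $\sigma_n$. Since $\sigma_n$ is invariant under conjugation and supported on $\s^2$, its law is a mixture, over the conjugacy class (equivalently over $F_n$), of the uniform law on involutions with a prescribed number of transpositions; in particular, conditionally on the \emph{set} $\mathcal M$ of its moved points, the restriction $\sigma_n|_{\mathcal M}$ is, up to the order‑isomorphism $\mathcal M\to\{1,\dots,|\mathcal M|\}$, a \emph{uniform fixed‑point‑free involution} on $m:=|\mathcal M|=n-F_n$ letters. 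As a restriction can only shorten the longest increasing subsequence, and as the fixed points of any permutation form an increasing subsequence, I would record the two bounds
\begin{align*}
\ell(\sigma_n)\ \geq\ \max\bigl(F_n,\ \ell(\sigma_n|_{\mathcal M})\bigr),\qquad \ell(\sigma_n|_{\mathcal M})\overset{d}{=}\ell(\pi_m),
\end{align*}
where $\pi_m$ denotes a uniform fixed‑point‑free involution on $m$ letters.

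First I would dispose of the regime $F_n\geq 2\sqrt n$, where $\ell(\sigma_n)\geq F_n\geq 2\sqrt n$ deterministically. The substantial case is $F_n<2\sqrt n$, so that $m=n-F_n\geq n-2\sqrt n$, and it then suffices to show that for every $\varepsilon>0$ one has $\ell(\pi_m)\geq(2-\varepsilon)\sqrt m$ with high probability. Here the key is to work not with $\lambda_1$ but with the macroscopic functional $\sum_i(\lambda_i(\cdot)-\gamma\sqrt m)_{+}$. Repeating verbatim the proof of Corollary~\ref{col11}, but feeding in the cited VKLS convergence for the uniform involution in place of Lemma~\ref{lem10}, yields for a uniform involution $\tau_m$ on $m$ letters
\begin{align*}
\frac{\sum_{i}\bigl(\lambda_i(\tau_m)-\gamma\sqrt m\bigr)_{+}}{m}\ \overset{\p}{\to}\ 2\,G(\gamma),\qquad 0\leq\gamma\leq 2.
\end{align*}
Now $\tau_m$ has $\approx\sqrt m=o(m)$ fixed points, and deleting one fixed point is the deletion of one letter, which alters the Young diagram by a single box (the two shapes interlace), hence changes $\sum_i(\lambda_i-\gamma\sqrt m)_{+}$ by at most one — the same mechanism as in Corollary~\ref{col13}. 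Deleting \emph{all} fixed points of $\tau_m$ therefore produces a uniform fixed‑point‑free involution whose functional differs from that of $\tau_m$ by at most $o(m)$.

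The whole point of using the sum functional rather than $\lambda_1$ is that it is of order $m$, so the $o(m)$ perturbation caused by removing the fixed points is negligible and, crucially, the constant is \emph{not} lost: for any $\gamma<2$ one has $2G(\gamma)>0$, whence the functional of the fixed‑point‑free involution stays $\geq(2G(\gamma)-o(1))\,m>0$ with high probability, and positivity of $\sum_i(\lambda_i-\gamma\sqrt m)_{+}$ is exactly $\lambda_1>\gamma\sqrt m$, i.e. $\ell(\pi_m)\geq\gamma\sqrt m$. Letting $\gamma\uparrow 2$, using $m\sim n$, and passing to expectations as in Corollary~\ref{col14} (via $\e(\ell)\geq\gamma\sqrt n\,\p(\ell\geq\gamma\sqrt n)$) delivers $\liminf_n \e(\ell(\sigma_n))/\sqrt n\geq 2$. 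The main obstacle is the transfer in the previous paragraph from the uniform involution to the fixed‑point‑free ensemble on a \emph{prescribed} number of letters: the cited Proposition describes the uniform involution, whose fixed‑point count concentrates at $\sqrt m$, so isolating a value of $F_m$ that is $o(\sqrt m)$ (in the extreme, the genuinely fixed‑point‑free class) is a conditioning on an event of only polynomially small probability $\approx m^{-1/4}$. One therefore needs the VKLS convergence to hold with a concentration rate beating $m^{-1/4}$, so that the shape estimate survives this conditioning; securing (or citing) that rate, rather than the bare convergence in probability, is the one genuinely delicate ingredient, the rest being the bookkeeping above together with the one‑box interlacing bound for letter‑deletions.
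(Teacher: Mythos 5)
Your reduction of the conjugation-invariant law to a mixture of uniform fixed-point-free involutions on the set of moved points is correct, as is the case split on $F_n\geq 2\sqrt{n}$ and the decision to work with the functional $\sum_i(\lambda_i-\gamma\sqrt{m})_+$ rather than $\lambda_1$. But there is a genuine gap exactly where you flag one, and it is not a removable formality: you need the convergence of this functional (equivalently, the Vershik--Kerov--Logan--Shepp shape) for the uniform fixed-point-free involution on a \emph{prescribed} number of letters, whereas the cited Gelfand-measure proposition concerns the unconditioned uniform involution. Your letter-deletion argument only produces a uniform fixed-point-free involution on the \emph{random} number $N-F_N$ of letters ($F_N$ having fluctuations of order $N^{1/4}$), so pinning the size down to a given $m$ is a conditioning on an event of probability of order $N^{-1/4}$, under which bare convergence in probability yields nothing. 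Neither the paper nor your proposal supplies a concentration rate for the Gelfand shape, so as written the argument is conditional on an unverified ingredient.

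The missing idea --- and the paper's actual mechanism --- is to replace conditioning by an exact coupling: the paper defines a Markov operator $T'$, the analogue of $T$ from Lemma~\ref{basiclem}, which merges the fixed points of an involution pairwise into transpositions (leaving a single fixed point when $n$ is odd). For \emph{any} conjugation-invariant law on $\s^2$, the law of $T'(\sigma_n)$ is exactly the uniform law on fixed-point-free involutions (of matching parity), and, as in Corollary~\ref{col13}, the partial sums of the shape are perturbed by at most $card(fix(\sigma_n))$. Applied to the uniform involution, whose number of fixed points is of order $\sqrt{n}$ with high probability, this transfers the shape estimate from the Gelfand measure to the fixed-point-free ensemble at cost $o(n)$ --- no conditioning, no rate; applied to your $\sigma_n$ on the event $F_n<2\sqrt{n}$, it finishes the proof as in Corollary~\ref{col14} (the paper handles the remaining case via $\e(card(fix(\sigma_n)))\geq 2\sqrt{n}$, just as you do). Alternatively, your route could be repaired within its own logic by a second coupling: deleting a uniformly chosen transposition from a uniform fixed-point-free involution on $m+2$ letters yields the uniform one on $m$ letters and moves the functional by $O(1)$, so one can step from the random size $N-F_N$ down to any prescribed nearby $m$ at total cost $o(m)$, again bypassing the small-probability conditioning. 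Either way, the concentration rate you ask for is unnecessary; without one of these couplings, however, your proof does not close.
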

\begin{proof}[Idea of the proof]
If $\frac{\e(card(fix(\sigma_n)))}{\sqrt{n}}\geq 2$ the result is trivial. Otherwise, the technique of  proof is identical to that of Corollary~\ref{col14}. Going back to Lemma~\ref{basiclem}, we replace $A_\sigma$ by 
$$ A'_\sigma:=\{\rho \in \s; \sigma=\rho\circ(i_1,i_2)\circ\dots\circ(i_{card(fix(\sigma))-1},i_{card(fix(\sigma))}), fix(\rho)=\emptyset\}$$
if $n$ is even and by
$$ A'_\sigma:=\{\rho \in \s; \sigma=\rho\circ(i_1,i_2)\circ\dots\circ(i_{card(fix(\sigma))-2},i_{card(fix(\sigma))-1}), card(fix(\rho))=1\}$$
if $n$ is odd. We denote by
 $T'$ the Markov operator  on $\s^2$ associated to the stochastic matrix 
$\left[\frac{\mathrm{1}_{A'_{\sigma_1}}(\sigma_2)}{card(A_{\sigma_1})}\right]_{\sigma_1,\sigma_2 \in \s^2}$.
That means that we  merge  couples of fixed points to obtain the uniform distribution on permutations having only cycles of length $2$ when $n$ is even and having an additional fixed point when $n$ is odd.
Similarly to that we did in Lemma~\ref{basiclem}, for any permutation $\sigma$, we have the following. 
\begin{itemize}
    \item [-]Almost surely, 
    \begin{equation*}
    |\ell(T'(\sigma))-\ell(\sigma)|\leq card(fix(\sigma)).
        \end{equation*}
    \item[-] More generally, almost surely,
\begin{align*} 
\max_{i\geq 1} \left|\sum_{k=1}^i\left( \lambda_k(\sigma)-\lambda_k\left(T'(\sigma)\right)\right)\right|\leq card(fix(\sigma)).
\end{align*}
\end{itemize}
Moreover, if $\sigma_n$ is invariant under conjugation, the law of $T'(\sigma_n)$ does not depend on the law of $\sigma_n$.
\\ 
 Consequently, Corollary~\ref{cor24} follows using the same techniques as in the proof of Corollary~\ref{col14}.
\end{proof}

\begin{corollary}\label{fincor}
Let  $\{\sigma_n\}_{n\geq 1}$ be a sequence of random permutations each one being  invariant under conjugation. Assume that there exists a sequence  $(\beta_n)_{n\geq 1}$ such that $$\lim_{n\to \infty}  \beta_n=+ \infty, $$ and for any $n\geq 1$, 
\begin{equation*}
    \p(card(fix(\sigma_{n}^2))>\beta_n)=1.
\end{equation*}
Then
\begin{align*}
\liminf_{n\to\infty} \frac{\e\left(\ell(\sigma_n)\right)}{\sqrt{\beta_n}}\geq 2.
\end{align*}
\begin{proof}
Giving  $A\subset \mathbb{N}$ finite, we denote  by $\mathfrak{S}_A$ (resp. $\mathfrak{S}_A^2$)  the set of permutations  (respect involutions)  of $A$. A random permutation $\sigma_A$ supported on $\mathfrak{S}_A$ is called \textit{invariant under conjugation} if for any $\sigma \in\mathfrak{S}_A$ , $\sigma \circ {\sigma}_A \circ \sigma^{-1}$  is equal in distribution to $\sigma_A$.
\\
Fix $\varepsilon>0$. By Corollary~\ref{cor24}, there exists $n_0$ such that for any  $A\subset \mathbb{N}$ with $n_0<card(A)<+\infty$, for any random permutation $\hat\sigma_A$ supported on $\mathfrak{S}_A^2$ invariant under conjugation,  
\begin{equation*}
    \frac{\e{(\ell(\hat\sigma_A)})}{\sqrt{card(A)}} \geq 2-\varepsilon.
\end{equation*}
Let $\sigma_n$ be a random permutation invariant under conjugation and $\rho_n$  be the restriction of $\sigma_n$ on $fix(\sigma_n^2)$. In particular, almost surely $\ell(\rho_n)\leq \ell(\sigma_n)$. One can see that for any $A \subset \{1,2,\dots,n\}$ such that $\p(fix(\sigma^2_n)=A)>0$, for any $\hat{\sigma}_1,\hat{\sigma}_2 \in \mathfrak{S}_A,$
\begin{equation}
    \p(\rho_n=\hat{\sigma}_1|fix(\sigma^2_n)=A)=\p(\rho_n=\hat{\sigma}_2\circ\hat{\sigma}_1\circ\hat{\sigma}_2^{-1}|fix(\sigma^2_n)=A).
\end{equation}
Consequently, if $\beta_n>n_0$,
\begin{align*}
\frac{\e\left(\ell(\sigma_n)\right)}{\sqrt{\beta_n}}& =\sum_{{}^{\quad \; \; |A|>\beta_n}_{\p(fix(\sigma^2_n)=A)>0}} \frac{\e\left(\ell(\sigma_n)|fix(\sigma_n^2)=A\right)}{\sqrt{\beta_n}} \p(fix(\sigma^2_n)=A)
\\&\geq \sum_{{}^{\quad \; \; |A|>\beta_n}_{\p(fix(\sigma^2_n)=A)>0}} (2-\varepsilon) \sqrt{\frac{card(A)}{\beta_n}}\p(fix(\sigma^2_n)=A)
\\&\geq \sum_{{}^{\quad \; \; |A|>\beta_n}_{\p(fix(\sigma^2_n)=A)>0}} (2-\varepsilon) \p(fix(\sigma^2_n)=A)=2-\varepsilon.
\end{align*}
This yields Corollary~\ref{fincor}.
\end{proof}
\end{corollary}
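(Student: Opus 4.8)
The plan is to reduce the statement to the involution bound of Corollary~\ref{cor24} by restricting $\sigma_n$ to the union of its short cycles and then conditioning on that support. First I would note that $A:=fix(\sigma_n^2)$ is exactly the set of points lying in a $1$-cycle or a $2$-cycle of $\sigma_n$, and that $\sigma_n$ maps $A$ into itself: if $\sigma_n^2(i)=i$ then $\sigma_n^2(\sigma_n(i))=\sigma_n(\sigma_n^2(i))=\sigma_n(i)$. Hence the restriction $\rho_n:=\sigma_n|_A$ is a well-defined involution of $A$. Any increasing subsequence of $\rho_n$ is in particular an increasing subsequence of $\sigma_n$, so almost surely $\ell(\rho_n)\leq\ell(\sigma_n)$, and it suffices to bound $\e(\ell(\rho_n))$ from below.

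The second step is to show that, conditionally on the event $\{fix(\sigma_n^2)=A\}$, the law of $\rho_n$ is a conjugation-invariant random involution of $A$. I would derive this straight from the conjugation invariance of $\sigma_n$: given $\hat\sigma\in\mathfrak{S}_A$, extend it to an element of $\s$ fixing the complement of $A$; then $\hat\sigma\circ\sigma_n\circ\hat\sigma^{-1}$ has the same law as $\sigma_n$, the event $\{fix(\sigma_n^2)=A\}$ is preserved because $fix((\hat\sigma\circ\sigma_n\circ\hat\sigma^{-1})^2)=\hat\sigma(fix(\sigma_n^2))$ and $\hat\sigma(A)=A$, and on this event the restriction is carried to $\hat\sigma\circ\rho_n\circ\hat\sigma^{-1}$. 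This yields the desired invariance of the conditional law.

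Third, I would upgrade Corollary~\ref{cor24} to a bound that is uniform in the support. Since the distribution of $\ell$ and the conjugation-invariance hypothesis are unchanged under relabeling of the ground set, the corollary depends only on $card(A)$; thus for every $\varepsilon>0$ there is an integer $n_0$ such that for every finite $A$ with $card(A)>n_0$ and every conjugation-invariant involution $\hat\sigma_A$ of $A$, one has $\e(\ell(\hat\sigma_A))\geq(2-\varepsilon)\sqrt{card(A)}$.

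Finally I would assemble the pieces. For $n$ large enough that $\beta_n>n_0$, the hypothesis forces $card(A)>\beta_n$ for every $A$ of positive probability, so conditioning on $\{fix(\sigma_n^2)=A\}$ and applying the uniform bound gives $\e\big(\ell(\sigma_n)\mid fix(\sigma_n^2)=A\big)\geq(2-\varepsilon)\sqrt{card(A)}\geq(2-\varepsilon)\sqrt{\beta_n}$. Averaging over $A$ and dividing by $\sqrt{\beta_n}$ yields $\e(\ell(\sigma_n))/\sqrt{\beta_n}\geq 2-\varepsilon$, and letting $\varepsilon\to0$ concludes. I expect the conditional conjugation-invariance to be the main obstacle: one must verify that conditioning on the random support $A$ does not break invariance, which is precisely why the argument restricts to the intrinsically defined set $fix(\sigma_n^2)$ rather than to the fixed points of $\sigma_n$ alone.
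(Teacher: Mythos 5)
Your proposal is correct and follows essentially the same route as the paper: bound $\ell(\sigma_n)$ below by $\ell$ of the restriction $\rho_n$ to $fix(\sigma_n^2)$, check that the conditional law of $\rho_n$ given $\{fix(\sigma_n^2)=A\}$ is a conjugation-invariant involution of $A$, and apply the uniform-in-$A$ version of Corollary~\ref{cor24} before averaging over $A$. In fact you supply two details the paper leaves implicit --- the verification that $\sigma_n$ stabilizes $fix(\sigma_n^2)$ so that $\rho_n$ is a well-defined involution, and the explicit conjugation argument via extending $\hat\sigma\in\mathfrak{S}_A$ by the identity --- so nothing further is needed.
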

\begin{lemma} \label{finlem}
 For any permutation $\sigma\in \s$,
 \begin{equation*}
     card(fix(\sigma^2))\geq 6\#(\sigma)-3 card(fix(\sigma))-2n.
 \end{equation*}
\begin{proof}
We denote by $\#_k(\sigma)$ the number of cycles of  $\sigma$ of length $k$. We have
\begin{align*}
    \sum_{k\geq 1} k \#_k(\sigma)=n \quad \text{and} \quad  \sum_{k\geq 1}  \#_k(\sigma)=\#(\sigma).
\end{align*}
Thus
\begin{align*}
    n+2card(fix(\sigma))+  \#_2(\sigma)&= 3card(fix(\sigma))+ 3 \#_2(\sigma) +\sum_{k\geq 3} k \#_k(\sigma)
    \\& \geq 3  \sum_{k\geq 1}  \#_k(\sigma) = 3 \#(\sigma).
\end{align*}
Consequently,
\begin{align*}
      \#_2(\sigma)\geq  3 \#(\sigma) -n-2card(fix(\sigma)).
\end{align*}
Finally, 
 \begin{equation*}
     card(fix(\sigma^2)) = card(fix(\sigma))+ 2 \#_2(\sigma)  \geq 6\#(\sigma)-3card(fix(\sigma))-2n.
 \end{equation*}
\end{proof}
\end{lemma}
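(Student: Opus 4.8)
The plan is to translate the statement entirely into cycle‑type statistics of $\sigma$. Writing $\#_k(\sigma)$ for the number of cycles of $\sigma$ of length $k$, I would first record the exact count of the fixed points of $\sigma^2$. A point is fixed by $\sigma^2$ precisely when it sits in a cycle of $\sigma$ whose length divides $2$, i.e. a cycle of length $1$ or $2$: each fixed point of $\sigma$ remains fixed by $\sigma^2$, and each transposition of $\sigma$ contributes exactly two fixed points of $\sigma^2$ (a $2$‑cycle squares to the identity on its support). This yields the identity
$$card(fix(\sigma^2)) = card(fix(\sigma)) + 2\#_2(\sigma).$$
Hence the lemma is equivalent to the single lower bound $\#_2(\sigma)\geq 3\#(\sigma) - 2\,card(fix(\sigma)) - n$, and the whole problem reduces to bounding the number of transpositions in the cycle decomposition from below.

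To produce that bound I would use the two elementary conservation laws $\sum_{k\geq1}k\#_k(\sigma)=n$ and $\sum_{k\geq1}\#_k(\sigma)=\#(\sigma)$, together with $\#_1(\sigma)=card(fix(\sigma))$. The key step is to handle the long cycles crudely: every cycle of length $k\geq 3$ carries at least three points, so
$$\sum_{k\geq 3}k\#_k(\sigma)\ \geq\ 3\sum_{k\geq 3}\#_k(\sigma)\ =\ 3\bigl(\#(\sigma)-\#_1(\sigma)-\#_2(\sigma)\bigr).$$
Substituting this into the decomposition $n=\#_1(\sigma)+2\#_2(\sigma)+\sum_{k\geq 3}k\#_k(\sigma)$ and rearranging gives exactly $\#_2(\sigma)\geq 3\#(\sigma)-2\,card(fix(\sigma))-n$, which is what the first step asks for.

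Feeding this back into the identity of the first paragraph then finishes the argument:
$$card(fix(\sigma^2)) \geq card(fix(\sigma)) + 2\bigl(3\#(\sigma)-2\,card(fix(\sigma))-n\bigr) = 6\#(\sigma)-3\,card(fix(\sigma))-2n.$$
I do not expect any real obstacle here, since the statement is ultimately an arithmetic inequality among nonnegative integers; the only point demanding care is that the crude estimate $k\geq 3$ on the long cycles must be applied in the correct direction, as it is precisely this inequality (rather than an equality) that supplies the slack converting an exact count into a one‑sided bound. It is worth noting that the estimate is tight exactly when $\sigma$ has no cycle of length $\geq 4$, i.e. when $\sigma$ is a product of fixed points, transpositions, and $3$‑cycles only.
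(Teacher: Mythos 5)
Your proof is correct and takes essentially the same route as the paper: both rest on the identity $card(fix(\sigma^2))=card(fix(\sigma))+2\#_2(\sigma)$, the two conservation laws $\sum_{k\geq 1}k\#_k(\sigma)=n$ and $\sum_{k\geq 1}\#_k(\sigma)=\#(\sigma)$, and the crude bound $\sum_{k\geq 3}k\#_k(\sigma)\geq 3\sum_{k\geq 3}\#_k(\sigma)$ to isolate $\#_2(\sigma)\geq 3\#(\sigma)-2\,card(fix(\sigma))-n$. Your added justification of the fixed-point identity and the (correct) tightness remark are minor embellishments of the identical argument.
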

We will now prove Theorem~\ref{thm20}.
\begin{proof}
In this proof, we use the following convention. Let $A,B \subset \s$ and $f:\s\to\mathbb{R}$. If $\p(\sigma_n\in A)=0$, we assign $ \p(\sigma_n \in B|  \sigma_n \in A )=0$ and $\e(f(\sigma_n)|\sigma_n \in A)=0$.
\\ We have 
\begin{align*}
\e(\ell(\sigma_n))&=\e\left(\ell(\sigma_n)\middle|\#(\sigma_n)\leq \frac{(2+\theta)n}{6}\right)\p\left(\#(\sigma_n)\leq \frac{(2+\theta)n}{6}\right)\\&+
\e\left(\ell(\sigma_n)\middle|\#(\sigma_n)> \frac{(2+\theta)n}{6}\right)\p\left(\#(\sigma_n)> \frac{(2+\theta)n}{6}\right).
\end{align*}
Since the condition on the number of cycles is invariant under conjugation,  it is sufficient to prove Theorem \ref{thm20} in the two particular cases. 
\begin{itemize}
    \item[-] Assume that almost surely $\#(\sigma_n)\leq \frac{(2+\theta)n}{6}$.
     By Lemma \ref{lem21},  for any $0<\gamma<2$, 
    \begin{align*}
        \p\left(\frac{\ell(\sigma_n)}{\sqrt{n}}>\gamma\right)&\geq \p\left(\frac{\sum_{i=1}^{n}(\lambda_i(T(\sigma_n))-\gamma\sqrt{n})_{+}}{n}> \frac{2+\theta}{6}\right).
    \end{align*}
As $T(\sigma_n)$ is distributed according to the $Ew(0)$, by choosing $\gamma=2\sqrt{\theta}-\varepsilon$ for some $\varepsilon>0$ in Corollary \ref{col11}, we can conclude that the right hand side goes to $1$ as $n$ goes to infinity.
    \item[-] Assume that  almost surely $\#(\sigma_n)> \frac{(2+\theta)n}{6}$.
    We can write, 
    \begin{align*}
        \e(\ell(\sigma_n))&=\e(\ell(\sigma_n)|card(fix(\sigma_n)) \geq 2 \sqrt{n\theta})\p(card(fix(\sigma_n)) \geq 2 \sqrt{n\theta})\\&+
        \e(\ell(\sigma_n)|card(fix(\sigma_n)) < 2 \sqrt{n\theta})\p(card(fix(\sigma_n)) < 2 \sqrt{n\theta}).
    \end{align*}
Clearly,  if $\p(card(fix(\sigma_n)) \geq 2 \sqrt{n\theta})>0$, then  $$\e(\ell(\sigma_n)|card(fix(\sigma_n)) \geq 2 \sqrt{n\theta})>2\sqrt{n\theta}.$$ Moreover, under the condition $card(fix(\sigma_n)) < 2 \sqrt{n\theta},$ we have by Lemma~\ref{finlem},  almost surely,
\begin{equation*}
card(fix(\sigma^2_n)))>\theta n - 6\sqrt{\theta n}.    
\end{equation*}
 We can then conclude by Corollary~\ref{fincor} that if $ \p(card(fix(\sigma_n)) < 2 \sqrt{n\theta})>0$, then
 \begin{align*}
 \liminf_{n\to\infty}\frac{\e\left(\ell(\sigma_n)\middle|card(fix(\sigma_n)) < 2 \sqrt{\theta n}\right)} {\sqrt{n\theta-6\sqrt{n\theta}}}\geq2. \end{align*}
Thus, if $ \p(card(fix(\sigma_n)) < 2 \sqrt{n\theta})>0$, then \begin{align*}
 \liminf_{n\to\infty}\frac{\e\left(\ell(\sigma_n)\middle|card(fix(\sigma_n)) < 2 \sqrt{n\theta}\right)} {\sqrt{n}}
 \geq2\sqrt{\theta}. \end{align*}
    \end{itemize}
    \end{proof}

\subsection{Proof of Theorem \ref{thm8} and  Proposition~\ref{thm5}.} \label{pthm8}
The proofs of Theorem \ref{thm8} and  Proposition~\ref{thm5} are based on the following observation.
\begin{lemma}\label{lem16} For any  permutations $\sigma_{1},\sigma_{2}$, almost surely,
\begin{align*}
    |LCS(\sigma_{1},\sigma_{2})- LCS(T(\sigma_{1}),\sigma_{2})| \leq\#(\sigma_{1}).
\end{align*}
\end{lemma}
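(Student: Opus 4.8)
The plan is to reduce the statement to a longest-increasing-subsequence comparison of the type already established in Lemma~\ref{basiclem}, and the whole point is to pick the correct reduction via Proposition~\ref{prop}. First I would apply Proposition~\ref{prop} in the two forms $LCS(\sigma_1,\sigma_2)=\ell(\sigma_2^{-1}\circ\sigma_1)$ and $LCS(T(\sigma_1),\sigma_2)=\ell(\sigma_2^{-1}\circ T(\sigma_1))$. The reason for choosing these particular representatives is that left multiplication by the common fixed permutation $\sigma_2^{-1}$ appears in both, so the whole discrepancy between the two images will be pushed to the right and localized on the factor coming from $T$.

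Next I would unwind the definition of the Markov operator $T$. By construction of $A_{\sigma_1}$, there exist points $i_1,\dots,i_{\#(\sigma_1)}$ such that $T(\sigma_1)=\sigma_1\circ\tau$, where $\tau=(i_1,i_2)\circ(i_1,i_3)\circ\cdots\circ(i_1,i_{\#(\sigma_1)})$. Consequently $\sigma_2^{-1}\circ T(\sigma_1)=(\sigma_2^{-1}\circ\sigma_1)\circ\tau$, so writing $\pi:=\sigma_2^{-1}\circ\sigma_1$ the claim becomes $|\ell(\pi)-\ell(\pi\circ\tau)|\leq\#(\sigma_1)$. Now $\tau$ fixes every index outside $\{i_1,\dots,i_{\#(\sigma_1)}\}$, a set of cardinality at most $\#(\sigma_1)$, and therefore $\pi$ and $\pi\circ\tau$ agree off this set. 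I would then run exactly the deletion argument behind \eqref{eq13}: starting from a longest increasing subsequence of $\pi$, delete the indices lying in $\{i_1,\dots,i_{\#(\sigma_1)}\}$; the surviving indices still form an increasing subsequence of $\pi\circ\tau$, which gives $\ell(\pi\circ\tau)\geq\ell(\pi)-\#(\sigma_1)$, and the reverse inequality follows by symmetry.

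The only genuine subtlety, and what I expect to be the main point to get right, is recognizing that the bound underlying Lemma~\ref{basiclem} depends solely on the number of coordinates that $\tau$ moves and not at all on $\tau$ being adapted to the cycle structure of the permutation it multiplies. Here $\tau$ is manufactured from the cycles of $\sigma_1$, yet it is composed with $\pi=\sigma_2^{-1}\circ\sigma_1$, whose cycle structure is unrelated; one must therefore invoke the estimate in its purely combinatorial guise — deleting at most $\#(\sigma_1)$ coordinates changes $\ell$ by at most $\#(\sigma_1)$ — rather than in its cycle-merging interpretation. Once this is observed, the argument of \eqref{eq13} transfers verbatim to $\pi$ and $\pi\circ\tau$, and the lemma follows.
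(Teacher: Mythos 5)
Your proof is correct and is essentially the paper's own argument: the paper dismisses Lemma~\ref{lem16} with the single remark that the proof is identical to that of Lemma~\ref{basiclem}, which unpacks exactly to your reduction via Proposition~\ref{prop} to comparing $\ell(\pi)$ with $\ell(\pi\circ\tau)$ for $\pi=\sigma_2^{-1}\circ\sigma_1$ and $\tau=(i_1,i_2)\circ\cdots\circ(i_1,i_{\#(\sigma_1)})$, followed by the deletion argument behind \eqref{eq13}. Your explicit observation that the estimate depends only on the at most $\#(\sigma_1)$ points moved by $\tau$, and not on $\tau$ being adapted to the cycle structure of the permutation it multiplies, is precisely the point left implicit in the paper's one-line proof.
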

The proof is identical to that of Lemma~\ref{basiclem}.
\begin{corollary}\label{col17} Assume that the law of ${\tilde\sigma}_{1,n}$ is Ew(0) and ${\tilde\sigma}_{1,n}$ and $\sigma_{2,n}$  are independent. Then
\begin{equation*} 
\lim_{n \to \infty} \mathbb{P}\left(\frac{LCS(\tilde{\sigma}_{1,n},\sigma_{2,n})-2\sqrt{n}}{\sqrt[6]{n}}\leq s\right)=F_2(s),
\end{equation*}
\begin{align*}
\lim_{n \to \infty} \frac{\e\left({LCS(\tilde{\sigma}_{1,n},\sigma_{2,n})}\right)}{\sqrt{n}}=2 \quad \text{and} \quad \frac{{LCS(\tilde{\sigma}_{1,n},\sigma_{2,n})}}{\sqrt{n}}\overset{\mathbb{P}}{\to}2.
\end{align*}\end{corollary}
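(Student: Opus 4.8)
The plan is to reduce all three statements to the longest increasing subsequence of a \emph{uniform} permutation, for which the Tracy--Widom fluctuations and the convergence of the expectation are classical, by realizing the $Ew(0)$ permutation $\tilde\sigma_{1,n}$ as the image under $T$ of a uniform one. Since each assertion depends only on the joint law of $(\tilde\sigma_{1,n},\sigma_{2,n})$, I am free to pick a convenient coupling. I would take $U_n$ uniform on $\s$, independent of $\sigma_{2,n}$, and let the auxiliary randomness used by $T$ be independent of both. By Lemma~\ref{basiclem} the law of $T(U_n)$ is $Ew(0)$, and $T(U_n)$ is a function of $U_n$ and of the auxiliary randomness, hence independent of $\sigma_{2,n}$. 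Therefore $(T(U_n),\sigma_{2,n})$ has the same distribution as $(\tilde\sigma_{1,n},\sigma_{2,n})$, and it suffices to prove the three statements with $T(U_n)$ in place of $\tilde\sigma_{1,n}$.

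Next I would establish the benchmark for the uniform permutation. By Proposition~\ref{prop}, $LCS(U_n,\sigma_{2,n})=\ell(U_n^{-1}\circ\sigma_{2,n})$, and conditioning on $\sigma_{2,n}=\tau$ and using that $U_n^{-1}$ is uniform shows that $U_n^{-1}\circ\sigma_{2,n}$ is uniform on $\s$ \emph{whatever} the law of $\sigma_{2,n}$ is. Consequently the Baik--Deift--Johansson theorem gives that $\frac{LCS(U_n,\sigma_{2,n})-2\sqrt n}{\sqrt[6]{n}}$ converges in distribution to $F_2$; in particular $\frac{LCS(U_n,\sigma_{2,n})}{\sqrt n}\overset{\mathbb{P}}{\to}2$, and, together with the convergence $\frac{\e(\ell(U_n^{-1}\circ\sigma_{2,n}))}{\sqrt n}\to 2$ coming from \cite{LOGAN1977206} and \cite{MR0480398} (with uniform integrability), also $\frac{\e(LCS(U_n,\sigma_{2,n}))}{\sqrt n}\to 2$. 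Thus $LCS(U_n,\sigma_{2,n})$ already satisfies all three conclusions.

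It remains to transfer these to $T(U_n)$. By Lemma~\ref{lem16}, almost surely $|LCS(U_n,\sigma_{2,n})-LCS(T(U_n),\sigma_{2,n})|\leq\#(U_n)$. Since $U_n$ is uniform, $\e(\#(U_n))=\sum_{k=1}^n\frac1k=O(\log n)$, so Markov's inequality yields $\frac{\#(U_n)}{\sqrt[6]{n}}\overset{\mathbb{P}}{\to}0$ while $\e(\#(U_n))=o(\sqrt n)$. A Slutsky argument then upgrades the convergence in distribution of $\frac{LCS(U_n,\sigma_{2,n})-2\sqrt n}{\sqrt[6]{n}}$ to the same limit $F_2$ for $\frac{LCS(T(U_n),\sigma_{2,n})-2\sqrt n}{\sqrt[6]{n}}$; since $\sqrt[6]{n}/\sqrt n\to0$ this also gives $\frac{LCS(T(U_n),\sigma_{2,n})}{\sqrt n}\overset{\mathbb{P}}{\to}2$; and the $o(\sqrt n)$ bound on $\e(\#(U_n))$ turns $\frac{\e(LCS(U_n,\sigma_{2,n}))}{\sqrt n}\to2$ into $\frac{\e(LCS(T(U_n),\sigma_{2,n}))}{\sqrt n}\to2$. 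Equality in law between $(T(U_n),\sigma_{2,n})$ and $(\tilde\sigma_{1,n},\sigma_{2,n})$ finishes the three claims.

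There is no deep obstacle here: the argument is essentially an assembly of known ingredients. The only genuine content is the coupling $\tilde\sigma_{1,n}=T(U_n)$, which simultaneously produces the exact $Ew(0)$ law (via Lemma~\ref{basiclem}) and preserves independence from $\sigma_{2,n}$, so that the arbitrary law of $\sigma_{2,n}$ never interferes. The main point to watch is simply that the discrepancy $\#(U_n)\sim\log n$ is negligible both at the Tracy--Widom scale $\sqrt[6]{n}$ and, in expectation, against $\sqrt n$; everything else is Slutsky and transfer through equality in distribution.
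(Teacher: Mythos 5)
Your proposal is correct and takes essentially the same approach as the paper: you realize $\tilde\sigma_{1,n}$ as $T(U_n)$ for a uniform $U_n$ independent of $\sigma_{2,n}$, observe that $U_n^{-1}\circ\sigma_{2,n}$ is uniform regardless of the law of $\sigma_{2,n}$ so that the Baik--Deift--Johansson and Vershik--Kerov results apply, and transfer the three conclusions through Lemma~\ref{lem16} using $\e(\#(U_n))=\log n+O(1)$, which is negligible at both the $\sqrt[6]{n}$ and $\sqrt{n}$ scales. You merely make explicit the coupling and Slutsky steps that the paper's proof leaves implicit.
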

\begin{proof}
Note that if $\sigma_{1,n}$ is distributed according the uniform distribution, one can see that the independence between $\sigma_{1,n}$ and $\sigma_{2,n}$ implies that $\sigma_{1,n}^{-1}\circ\sigma_{2,n}$ follows also the uniform distribution. In this case,
\begin{equation} \label{261}
\lim_{n \to \infty} \mathbb{P}\left(\frac{LCS({\sigma}_{1,n},\sigma_{2,n})-2\sqrt{n}}{\sqrt[6]{n}}\leq s\right)=\lim_{n \to \infty}
\mathbb{P}\left(\frac{\ell(\sigma_{1,n})-2\sqrt{n}}{\sqrt[6]{n}}\leq s\right)=
F_2(s), 
\end{equation}
\begin{equation}\label{262}
    \lim_{n \to \infty} \frac{\e\left({LCS({\sigma}_{1,n},\sigma_{2,n})}\right)}{\sqrt{n}}=    \lim_{n \to \infty} \frac{\e\left({\ell({\sigma}_{1,n})}\right)}{\sqrt{n}}=2,
\end{equation}
and
\begin{equation}\label{263}
    \frac{{LCS({\sigma}_{1,n},\sigma_{2,n})}}{\sqrt{n}}\overset{d}{=}\frac{\ell(\sigma_{1,n})}{\sqrt{n}}\overset{\mathbb{P}}{\to}2.
\end{equation}
The second equality of \eqref{261} is due to \cite*{Baik} and the second equality  of \eqref{262} and  the convergence of \eqref{263} are due to \cite{MR0480398}. Hence, one can conclude by Lemma~\ref{lem16} since ${\e(\#(\sigma_{1,n}))=log(n)+O(1)}$ and $LCS(\tilde{\sigma}_{1,n},\sigma_{2,n})$ is equal in distribution to $LCS(T({\sigma}_{1,n}),\sigma_{2,n})$.
\end{proof}
Using again Lemma~\ref{lem16}, Corollary \ref{col17} imply Proposition~\ref{thm5} since $T({\sigma}_{1,n})$ is distributed according to $Ew(0)$. 

\begin{proof}[Sketch of proof of Theorem~\ref{thm8}]
Using the same technique as in Corollary \ref{col11}, we can prove that for any $\varepsilon>0$, 
\begin{align*}
    \p\left(\frac{LCS(\sigma_{1,n},\sigma_{2,n})}{\sqrt{n}}  > G^{-1}\left(\frac{\#(\sigma_{1,n})}{2n}+\varepsilon\right)-\varepsilon\right)\to 1.
\end{align*} Consequently,\begin{align*}
    \liminf_{n\to \infty} \frac{\e(LCS(\sigma_{1,n},\sigma_{2,n}))}{\sqrt{n}}\geq \e\left(G^{-1} \left({\liminf_{n\to\infty}} \frac{\#(\sigma_{1,n})}{2n}\right)\right). 
\end{align*}
Since $G^{-1}$ is convex, we can conclude using Jensen's inequality.
\end{proof}


\begin{thebibliography}{20}
\providecommand{\natexlab}[1]{#1}
\providecommand{\url}[1]{\texttt{#1}}
\expandafter\ifx\csname urlstyle\endcsname\relax
  \providecommand{\doi}[1]{doi: #1}\else
  \providecommand{\doi}{doi: \begingroup \urlstyle{rm}\Url}\fi

\bibitem[Baik et~al.(1999)Baik, Deift, and Johansson]{Baik}
J.~Baik, P.~Deift, and K.~Johansson.
\newblock On the distribution of the length of the longest increasing
  subsequence of random permutations.
\newblock \emph{J. Amer. Math. Soc.}, 12\penalty0 (4):\penalty0 1119--1178,
  1999.
\newblock ISSN 0894-0347.
\newblock \doi{10.1090/S0894-0347-99-00307-0}.
\newblock URL \url{https://doi.org/10.1090/S0894-0347-99-00307-0}.

\bibitem[Bukh and Zhou(2016)]{MR3509473}
B.~Bukh and L.~Zhou.
\newblock Twins in words and long common subsequences in permutations.
\newblock \emph{Israel J. Math.}, 213\penalty0 (1):\penalty0 183--209, 2016.
\newblock ISSN 0021-2172.
\newblock \doi{10.1007/s11856-016-1323-8}.
\newblock URL \url{https://doi.org/10.1007/s11856-016-1323-8}.

\bibitem[Flajolet and Sedgewick(2009)]{9780521898065}
P.~Flajolet and R.~Sedgewick.
\newblock \emph{Analytic Combinatorics}.
\newblock Cambridge University Press, 2009.
\newblock ISBN 0521898064.
\newblock URL
  \url{https://www.amazon.com/Analytic-Combinatorics-Philippe-Flajolet/dp/0521898064?SubscriptionId=AKIAIOBINVZYXZQZ2U3A&tag=chimbori05-20&linkCode=xm2&camp=2025&creative=165953&creativeASIN=0521898064}.

\bibitem[Gautier et~al.(2018)Gautier, Bardenet, and Valko]{GaBaVa18}
G.~Gautier, R.~Bardenet, and M.~Valko.
\newblock {DPPy: Sampling Determinantal Point Processes with Python}.
\newblock \emph{ArXiv e-prints}, 2018.
\newblock URL \url{http://arxiv.org/abs/1809.07258}.

\bibitem[Greene(1974)]{GREENE1974254}
C.~Greene.
\newblock An extension of {S}chensted's theorem.
\newblock \emph{Advances in Math.}, 14:\penalty0 254--265, 1974.
\newblock ISSN 0001-8708.
\newblock \doi{10.1016/0001-8708(74)90031-0}.
\newblock URL \url{https://doi.org/10.1016/0001-8708(74)90031-0}.

\bibitem[Houdr{\'e} and I{\c{s}}lak(2014)]{houdre2014central}
C.~Houdr{\'e} and {\"U}.~I{\c{s}}lak.
\newblock A central limit theorem for the length of the longest common
  subsequences in random words.
\newblock \emph{arXiv preprint arXiv:1408.1559}, 2014.

\bibitem[Houdr\'{e} and Xu(2018)]{MR3830132}
C.~Houdr\'{e} and C.~Xu.
\newblock A note on the expected length of the longest common subsequences of
  two i.i.d. random permutations.
\newblock \emph{Electron. J. Combin.}, 25\penalty0 (2):\penalty0 Paper 2.50,
  10, 2018.
\newblock ISSN 1077-8926.

\bibitem[Ivanov and Olshanski(2002)]{10.1007/978-94-010-0524-1_3}
V.~Ivanov and G.~Olshanski.
\newblock Kerov's central limit theorem for the {P}lancherel measure on {Y}oung
  diagrams.
\newblock In \emph{Symmetric functions 2001: surveys of developments and
  perspectives}, volume~74 of \emph{NATO Sci. Ser. II Math. Phys. Chem.}, pages
  93--151. Kluwer Acad. Publ., Dordrecht, 2002.
\newblock \doi{10.1007/978-94-010-0524-1_3}.
\newblock URL \url{https://doi.org/10.1007/978-94-010-0524-1_3}.

\bibitem[Kammoun(2018)]{kammoun2018}
M.~S. Kammoun.
\newblock Monotonous subsequences and the descent process of invariant random
  permutations.
\newblock \emph{Electron. J. Probab.}, 23:\penalty0 31 pp., 2018.
\newblock \doi{10.1214/18-EJP244}.
\newblock URL \url{https://doi.org/10.1214/18-EJP244}.

\bibitem[Kerov(1993{\natexlab{a}})]{ss2}
S.~Kerov.
\newblock The asymptotics of interlacing sequences and the growth of continual
  {Y}oung diagrams.
\newblock \emph{Zap. Nauchn. Sem. S.-Peterburg. Otdel. Mat. Inst. Steklov.
  (POMI)}, 205\penalty0 (Differentsial prime naya Geom. Gruppy Li i Mekh.
  13):\penalty0 21--29, 179, 1993{\natexlab{a}}.
\newblock ISSN 0373-2703.
\newblock \doi{10.1007/BF02362775}.
\newblock URL \url{https://doi.org/10.1007/BF02362775}.

\bibitem[Kerov(1999)]{ss3}
S.~Kerov.
\newblock A differential model for the growth of {Y}oung diagrams.
\newblock In \emph{Proceedings of the {S}t. {P}etersburg {M}athematical
  {S}ociety, {V}ol. {IV}}, volume 188 of \emph{Amer. Math. Soc. Transl. Ser.
  2}, pages 111--130. Amer. Math. Soc., Providence, RI, 1999.
\newblock \doi{10.1090/trans2/188/06}.
\newblock URL \url{https://doi.org/10.1090/trans2/188/06}.

\bibitem[Kerov(1993{\natexlab{b}})]{ss1}
S.~V. Kerov.
\newblock Transition probabilities of continual {Y}oung diagrams and the
  {M}arkov moment problem.
\newblock \emph{Funktsional. Anal. i Prilozhen.}, 27\penalty0 (2):\penalty0
  32--49, 96, 1993{\natexlab{b}}.
\newblock ISSN 0374-1990.
\newblock \doi{10.1007/BF01085981}.
\newblock URL \url{https://doi.org/10.1007/BF01085981}.

\bibitem[Knuth(1970)]{RSKK}
D.~E. Knuth.
\newblock Permutations, matrices, and generalized {Y}oung tableaux.
\newblock \emph{Pacific J. Math.}, 34:\penalty0 709--727, 1970.
\newblock ISSN 0030-8730.
\newblock URL \url{http://projecteuclid.org/euclid.pjm/1102971948}.

\bibitem[Logan and Shepp(1977)]{LOGAN1977206}
B.~F. Logan and L.~A. Shepp.
\newblock A variational problem for random {Y}oung tableaux.
\newblock \emph{Advances in Math.}, 26\penalty0 (2):\penalty0 206--222, 1977.
\newblock ISSN 0001-8708.
\newblock \doi{10.1016/0001-8708(77)90030-5}.
\newblock URL \url{https://doi.org/10.1016/0001-8708(77)90030-5}.

\bibitem[M{\'e}liot(2011)]{meliot:hal-01215045}
P.-L. M{\'e}liot.
\newblock {Kerov's central limit theorem for Schur-Weyl and Gelfand measures
  (extended abstract)}.
\newblock In M.~Bousquet-M{\'e}lou, M.~Wachs, and A.~Hultman, editors,
  \emph{{23rd International Conference on Formal Power Series and Algebraic
  Combinatorics (FPSAC 2011)}}, volume DMTCS Proceedings vol. AO, 23rd
  International Conference on Formal Power Series and Algebraic Combinatorics
  (FPSAC 2011) of \emph{DMTCS Proceedings}, pages 669--680, Reykjavik, Iceland,
  2011. {Discrete Mathematics and Theoretical Computer Science}.
\newblock URL \url{https://hal.inria.fr/hal-01215045}.

\bibitem[Robinson(1938)]{RSKR}
G.~d.~B. Robinson.
\newblock On the {R}epresentations of the {S}ymmetric {G}roup.
\newblock \emph{Amer. J. Math.}, 60\penalty0 (3):\penalty0 745--760, 1938.
\newblock ISSN 0002-9327.
\newblock \doi{10.2307/2371609}.
\newblock URL \url{https://doi.org/10.2307/2371609}.

\bibitem[Sagan(2001)]{Sagan2001}
B.~E. Sagan.
\newblock \emph{The symmetric group}, volume 203 of \emph{Graduate Texts in
  Mathematics}.
\newblock Springer-Verlag, New York, second edition, 2001.
\newblock ISBN 0-387-95067-2.
\newblock \doi{10.1007/978-1-4757-6804-6}.
\newblock URL \url{https://doi.org/10.1007/978-1-4757-6804-6}.
\newblock Representations, combinatorial algorithms, and symmetric functions.

\bibitem[Schensted(1961)]{MR0121305}
C.~Schensted.
\newblock Longest increasing and decreasing subsequences.
\newblock \emph{Canad. J. Math.}, 13:\penalty0 179--191, 1961.
\newblock ISSN 0008-414X.
\newblock URL \url{https://doi.org/10.4153/CJM-1961-015-3}.

\bibitem[Vershik and Kerov(1977)]{MR0480398}
A.~M. Vershik and S.~V. Kerov.
\newblock Asymptotic behavior of the {P}lancherel measure of the symmetric
  group and the limit form of {Y}oung tableaux.
\newblock \emph{Dokl. Akad. Nauk SSSR}, 233\penalty0 (6):\penalty0 1024--1027,
  1977.
\newblock ISSN 0002-3264.

\bibitem[Vershik and Kerov(1985)]{Vershik1985}
A.~M. Vershik and S.~V. Kerov.
\newblock Asymptotic behavior of the maximum and generic dimensions of
  irreducible representations of the symmetric group.
\newblock \emph{Funktsional. Anal. i Prilozhen.}, 19\penalty0 (1):\penalty0
  25--36, 96, 1985.
\newblock ISSN 0374-1990.

\end{thebibliography}
\end{document}